\newtheorem{theorem}{Theorem}
\newtheorem{lemma}[theorem]{Lemma}
\newtheorem{proposition}[theorem]{Proposition}
\newtheorem{corollary}[theorem]{Corollary}
\theoremstyle{definition}
\newtheorem{assumption}[theorem]{Assumption}
\newtheorem{remark}[theorem]{Remark}
\numberwithin{definition}{section}
\numberwithin{proc}{section}
\numberwithin{equation}{section}
\numberwithin{condition}{section}
\numberwithin{assumption}{section}
\numberwithin{proposition}{section}
\numberwithin{theorem}{section}
\numberwithin{lemma}{section}
\numberwithin{remark}{section}
\numberwithin{claim}{section}
\numberwithin{observation}{section}
\numberwithin{corollary}{section}
\definecolor{webgreen}{rgb}{0,.5,0}
\definecolor{Maroon}{HTML}{800000}
\newcommand{\bE}{\mathbb{E}}
\newcommand{\bG}{\mathbb{G}}
\newcommand{\bN}{\mathbb{N}}
\newcommand{\bZ}{\mathbb{Z}}
\newcommand{\bR}{\mathbb{R}}
\newcommand\bfd{{\mathbf d}}
\newcommand\dsG{{\mathbb G}}
\newcommand{\ind}[1]{\mathbf{1}_{#1}}
\newcommand{\R}{{\mathbb{R}}}
\newcommand{\N}{{\mathbb{N}}}
\newcommand{\G}{{\mathbb{G}}}
\newcommand{\eps}{\epsilon}
\newcommand\cA{{\cal{A}}}
\newcommand\cB{{\cal{B}}}
\newcommand\cC{{\cal{C}}}
\newcommand\cD{{\cal{D}}}
\newcommand\cF{{\cal{F}}}
\newcommand\cL{{\cal{L}}}
\newcommand\cN{{\cal{N}}}
\newcommand{\E}[1]{{\mathbb{E}}\left[#1\right]}
\newcommand{\Vv}[1]{{\mathrm{Var}}(#1)}
\newcommand{\bP}{{\mathbb{P}}}
\newcommand{\p}[1]{{\mathbb{P}}(#1)}
\newcommand{\pl}[1]{{\mathbb{P}}\left(#1\right)}
\newcommand{\hcf}{\text{hcf}}
\crefname{app}{Appendix}{Appendices}
\crefname{assumption}{Assumption}{Assumptions}
\newcommand\dsCM{{\mathbb{CM} }}
\newcommand\Gn{{\dsG}_{n}}
\newcommand\CMn{{\dsCM}_{n}}
\newcommand\hD{\hat{D}}
\title{Largest component of subcritical random graphs with given degree sequence}
\author[\oplus]{Matthew Coulson}
\author[\sharp]{Guillem Perarnau}
\affil[\oplus]{\small\it Departament de Matem\`atiques (MAT), Universitat Polit\`ecnica de Catalunya (UPC), Barcelona, Spain. \\
Email:~{\tt matthew.john.coulson@upc.edu@upc.edu}.}
\affil[\sharp]{\small\it Departament de Matem\`atiques (MAT) and IMTECH, Universitat Polit\`ecnica de Catalunya (UPC), Barcelona, Spain.\\
\small\it Centre de Recerca Matem\`atica,  Barcelona, Spain\\ Email:~{\tt guillem.perarnau@upc.edu}.}
\begin{document}
\maketitle

\begin{abstract}
   We study the size of the largest component of two models of random graphs with prescribed degree sequence, the configuration model (CM) and the uniform model (UM), in the (barely) subcritical regime. For the CM, we give upper bounds that are asymptotically tight for certain degree sequences. These bounds hold under mild conditions on the sequence and improve previous results of Hatami and Molloy on the barely subcritical regime. For the UM, we give weaker upper bounds that are tight up to logarithmic terms but require no assumptions on the degree sequence. In particular, the latter result applies to degree sequences with infinite variance in the subcritical regime.
\end{abstract}

\section{Introduction}\label{sec:introduction}

 Let $[n]\coloneqq \{1,\dots,n\}$ be a set of $n$ vertices.  Let $\bfd_n=(d_1,\dots,d_n)$ be a degree sequence with $m\coloneqq \sum_{i\in [n]} d_i$ an even positive integer. Without loss of generality, we will assume that $d_1\leq \dots\leq d_n$. Additionally, we may assume that $d_1\geq 1$; if there are elements with degree $0$ we can remove them and study the remainder sequence. Let $\Delta=\Delta_n$ be the maximum degree of $\bfd_n$.

The \emph{configuration model}, denoted by $\CMn=\CMn(\bfd_n)$, is
the random multigraph on $[n]$ generated by giving $d_{i}$ \emph{half-edges} (or stubs) to vertex $i$, and then pairing the half-edges uniformly at random. 
The \emph{uniform model}, denoted by $\G_n=\G_n(\bfd_n)$, is the random simple graph on $[n]$ obtained by choosing a simple graph uniformly at random among all graphs on $[n]$ where vertex $i$ has degree $d_i$. Throughout the paper, all the results on the uniform model will assume that the sequence $\bfd_n$ is \emph{graphical}; that is, there exists at least one graph on $[n]$ with such degree sequence.

For any graph $G$ on $[n]$, let $L_1(G)$ denote the order of a largest component. A central problem in random graph theory is to find a parameter of the model $\alpha$ such that $L_1$ undergoes a phase transition at $\alpha=\alpha_0$. The set of parameters is then divided into subcritical ($\alpha<\alpha_0$), critical ($\alpha\sim\alpha_0$) and supercritical ($\alpha>\alpha_0$). A further problem is the study the critical window: that is, to find parameters $\beta^-,\beta^+$, such that $L_1$ behaves essentially the same for any $\alpha\in (\alpha_0-\beta^-, \alpha_0+\beta^+)$ and the critical region is further divided into barely subcritical, $\alpha_0 - \alpha >\beta^-$  and barely supercritical, where $\alpha - \alpha_0 >\beta^+$.

The main goal of this paper is to study the largest component phase transition $L_1(\CMn)$ and $L_1(\Gn)$ in the subcritical and the barely subcritical regimes.
Generally speaking, the study of configuration model is simpler due to the existence of an explicit model with good independence properties. In contrast, most of the results existing for $\Gn$ arise from $\CMn$ by observing that the probability that $\CMn$ generates a simple graph is sufficiently large.

In order to understand the phase transition, define
\begin{align}
Q=Q_n(\bfd_n) \coloneqq \frac{1}{m}\sum_{i\in [n]} d_i(d_i-2) \label{def:Q}\;,\\
R=R_n(\bfd_n) \coloneqq \frac{1}{m}\sum_{i\in [n]} d_i(d_i-2)^2\label{def:R}\;.
\end{align}
In the first part of the paper, we will focus on the case $Q_n\leq 0$. It is easy to check that the bound on $Q_n$ implies $\Delta_n=O(\sqrt{n})$ and $m\leq 2n$. Also note the implicit bound on the maximum degree $\Delta_n=O(n^{1/3}R_n^{1/3})$ obtained by just considering the contribution of a vertex of maximum degree to $R_n$.

Let $D_n$ be the degree of a uniform random vertex and let $\hD_n$ be its size-biased distribution; that is, for $k\geq 1$,
\begin{align}
\p{\hD_n=k} = \frac{k\p{D_n=k}}{m} \label{def:size}\;.
\end{align}
For $b,h\in \mathbb{N}$, let $\cL(b, h):= \{b+h k: k\in \mathbb{Z}\}$ be the integer lattice containing $b$ with step $h$. Let $h_n$ be the largest integer $h$ such that $\bP(D_n \in \cL(b,h))=1$ for some $b\in \bN$.

We will study the $\CMn$ under the following conditions on the degree sequence:
\begin{assumption}\label{assum:CM} There exists a discrete random variable $D$ supported on $\mathbb{Z}_{\geq 0}$ such that
    \begin{enumerate}[(i)]	
        \item\label{CM1}  $D_n\to D$ in distribution;
        \item\label{CM2}  $Q_n \to 0$;
        \item\label{CM3} $\bP(D \not\in\{0,2\})>0$;
        \item\label{CM4} if $h$ is the largest integer such that $\bP(D \in \cL(b,h))$ for some $b\in \bN$, then $h_n=h$ for all $n$.
        \item\label{CM5} $\bE(D_n^4) \leq \Delta_n^{1/2}$
        \end{enumerate}
        
\end{assumption}
\begin{remark}
Conditions (i)-(iii) are usual in this setting. In particular, they imply that $R_n$ is bounded away from zero, which will be often used in the proofs.

 Condition (iv) simply asks that the limiting degree distribution $D$ has the same step as the random variables $D_n$ which converge to it. This restriction is not particularly strong and forbids no limiting degree sequence, only the way in which we converge to it.
 
Condition (v) is the most restrictive one. As $Q_n=o(1)$, we have $\bE[D^2_n]=O(1)$, which implies $\bE[D^4_n]=O(\Delta_n^2)$. Thus, this condition can be understood as a ``polynomial limitation'' on the contribution of large degree vertices to the fourth moment. It would be interesting to see up to which point a condition on the fourth moment is needed.
\end{remark}


Our first result upper bounds the size of the largest component when $Q$ is not too large with respect to $R$.
\begin{theorem}\label{cor:CM2}
Let $\epsilon>0$. Let $\bfd_n$ be a degree sequence satisfying \cref{assum:CM} and $\Delta |Q|= o(R)$. If $Q\leq -\omega(n) n^{-1/3}R^{2/3}$ for some $\omega(n)\to \infty$, then
\begin{align}\label{eq:second_bound}
	\pl{L_1(\CMn(\bfd_n))\leq (1+\epsilon)\frac{2R}{Q^2}\log \left(\frac{|Q|^3 n}{R^2}\right)} =1-o(1)\;.
\end{align}
\end{theorem}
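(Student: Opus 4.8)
The plan is to show $\mathbb{P}(L_1(\CMn(\bfd_n))\ge k)=o(1)$ for $k:=\lceil(1+\epsilon)\tfrac{2R}{Q^2}\log(|Q|^3n/R^2)\rceil$ by a first‑moment estimate on the number of components of each order $s\ge k$, via $\mathbb{P}(L_1\ge k)\le\sum_{s\ge k}\mathbb{E}[\#\{\text{components of order }s\}]$. For $S\subseteq[n]$ with $|S|=s$ and $m_S:=\sum_{i\in S}d_i$, the probability that $S$ is exactly the vertex set of one component equals $c(S)\tfrac{(m-m_S-1)!!}{(m-1)!!}$, where $c(S)$ is the number of matchings of the half‑edges incident to $S$ whose multigraph on $S$ is connected; since $s=o(m)$ throughout the relevant range, $\tfrac{(m-m_S-1)!!}{(m-1)!!}=(1+o(1))m^{-m_S/2}$. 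Splitting $c(S)$ by the excess $\ell=m_S/2-s+1\ge0$, the main term is $\ell=0$ (spanning trees, forcing $m_S=2s-2$), for which a short configuration‑model computation gives $c(S)=(s-2)!\prod_{i\in S}d_i$; thus the tree part of $\mathbb{E}[\#\{\text{components of order }s\}]$ is $(1+o(1))\tfrac{(s-2)!}{m^{s-1}}\Sigma_s$ with $\Sigma_s:=\sum_{|S|=s,\,m_S=2s-2}\prod_{i\in S}d_i$.

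To estimate $\Sigma_s$, set $g(x):=\tfrac1m\sum_{i\in[n]}d_ix^{d_i}=\mathbb{E}[x^{\hat D_n}]$, so that $\Sigma_s=[x^{2s-2}]\,e_s(d_1x^{d_1},\dots,d_nx^{d_n})$; the coefficientwise inequality $e_s(b_1,\dots,b_n)\le\tfrac1{s!}(\sum_i b_i)^s$ for $b_i\ge0$ gives $\Sigma_s\le\tfrac{m^s}{s!}[x^{2s-2}]g(x)^s=\tfrac{m^s}{s!}\mathbb{P}(\hat D_1+\dots+\hat D_s=2s-2)$ for i.i.d.\ $\hat D_i\sim\hat D_n$. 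Let $\theta_0\ge0$ minimise $\Lambda(\theta):=\mathbb{E}[e^{\theta(\hat D_n-2)}]$ and $\rho:=\Lambda(\theta_0)$. Using $\mathbb{E}[\hat D_n-2]=Q$, $\mathbb{E}[(\hat D_n-2)^2]=R$, and — crucially — Assumption~\ref{assum:CM}(v) with $\Delta|Q|=o(R)$ to bound the third moment of $\hat D_n$ and show it is negligible in the expansion of $\log\Lambda$ near $0$, one obtains $\theta_0=(1+o(1))|Q|/R$ (so $\theta_0\Delta=o(1)$) and $\rho=e^{-(1+o(1))Q^2/(2R)}$, i.e.\ $1-\rho=(1+o(1))\tfrac{Q^2}{2R}$. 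The choice of $\theta_0$ re‑centres $\hat D_1+\dots+\hat D_s$ exactly at $2s$, so a local limit theorem for the $\theta_0$‑tilted law of $\hat D_n$ (legitimate by the fixed‑lattice hypothesis Assumption~\ref{assum:CM}(iv), with variance $(1+o(1))R$ since the tilt is small) yields $\mathbb{P}(\hat D_1+\dots+\hat D_s=2s-2)\le\tfrac{C}{\sqrt{sR}}\rho^s$, uniformly over $k\le s\le n^{2/3+\delta}$ (a fixed small $\delta>0$). With Stirling this gives $\mathbb{E}[\#\{\text{tree components of order }s\}]\le C'\tfrac{m}{s^{5/2}\sqrt R}\rho^s$.

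For components of positive excess one replaces the lossy bound $c(S)\le(s-2)!(\prod_{i\in S}d_i)(2\ell-1)!!\binom{s-2+2\ell}{2\ell}$ (which overcounts each configuration by its number of spanning trees) by the sharp asymptotics for the number of connected configurations with prescribed degrees and prescribed excess $\ell$, which shows their total contribution is $O(\tfrac{m}{s^{5/2}\sqrt R}\rho^s)$ as well; and the tail $s>n^{2/3+\delta}$ is handled separately by the component‑exploration random walk and Chernoff's inequality, which already gives $\mathbb{P}(L_1\ge n^{2/3+\delta})=o(1)$ (note $k\le n^{2/3+\delta}$, since $R/Q^2=O(n^{2/3})$ and $\log(|Q|^3n/R^2)=O(\log n)$). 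It remains to sum: as $1-\rho=(1+o(1))\tfrac{Q^2}{2R}$, we get $\sum_{s\ge k}\mathbb{E}[\#\{\text{components of order }s\}]\le(1+o(1))C'\tfrac{m}{\sqrt R\,k^{5/2}}\cdot\tfrac{2R}{Q^2}\rho^k$. Writing $N:=|Q|^3n/R^2$ and $k=(1+\epsilon)\tfrac{2R}{Q^2}\log N$, we have $\rho^k=N^{-(1+\epsilon)(1+o(1))}$, $k^{5/2}=\Theta((R/Q^2)^{5/2}(\log N)^{5/2})$, $m=\Theta(n)$, and $n|Q|^3=NR^2$; substituting, the right‑hand side equals $\Theta((\log N)^{-5/2}N^{-\epsilon(1+o(1))})=o(1)$, because $N\ge\omega(n)^3\to\infty$. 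Hence $\mathbb{P}(L_1(\CMn(\bfd_n))\ge k)=o(1)$, which is the theorem.

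The real weight sits in two places. First, the sharp estimate $\mathbb{P}(\hat D_1+\dots+\hat D_s=2s-2)=\Theta(s^{-1/2}\rho^s)$ with exponential rate exactly $(1+o(1))\tfrac{Q^2}{2R}$: a plain Chernoff bound $\rho^s$ is not enough — without the $s^{-1/2}$ factor (and with only $\Theta(Q^2/R)$ in the exponent) the assembled sum is $\Theta(n^{1/3}/\mathrm{polylog}\,n)$ rather than $o(1)$, and the sharp constant $2R/Q^2$ in front of the logarithm is lost — and making this estimate uniform in $n$ is exactly what the quantitative hypotheses Assumption~\ref{assum:CM}(v) and $\Delta|Q|=o(R)$ are designed to enable (they keep $\theta_0\approx|Q|/R$ small and the tilted law $\hat D_n^{(\theta_0)}$ well behaved). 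Second, the bookkeeping for positive‑excess components, which genuinely needs the count of connected configurations of given excess rather than a union bound over spanning trees; by contrast, the reduction of $\Sigma_s$ to a sum of i.i.d.\ size‑biased variables and the disposal of very large $s$ are routine.
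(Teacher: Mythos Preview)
Your route is \emph{genuinely different} from the paper's. The paper does not enumerate components by their vertex set. It runs the half-edge exploration process from each vertex $v$, stochastically dominates the number $X_t$ of open half-edges by an i.i.d.\ random walk $W_t$ with step law close to $\hat D_n-2$, and bounds the first time $W_t$ hits $0$ via Spitzer's lemma $\bP(\tau_W^{s}=t)\le \tfrac{s}{t}\bP(W_t=0)$, then exponential tilting plus the local limit theorem to get $\bP(W_t=0)\le C\rho^t/\sqrt{t}$ with the same $\rho=\varphi(\theta_0)\sim 1-\tfrac{Q^2}{2R}$ you find. Under $\Delta|Q|=o(R)$ the algebra $\theta_0\sim|Q|/R$, $\log\varphi(\theta_0)^{-1}\sim Q^2/(2R)$ then collapses $T_n$ from the general Theorem~\ref{thm:CM1} to $\tfrac{2R}{Q^2}\log(|Q|^3n/R^2)$. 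The exploration approach never needs to know whether the component is a tree, so the excess-$\ell$ case simply does not arise; it also never needs the enumeration of connected configurations or Cayley's formula.

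Your tree computation is essentially correct and recovers exactly the right rate and polynomial prefactor; the tilting/LLT ingredient is morally the same as the paper's. But there are two real gaps.
First, the approximation $\tfrac{(m-m_S-1)!!}{(m-1)!!}=(1+o(1))m^{-m_S/2}$ needs $m_S^2=o(m)$, i.e.\ $s=o(\sqrt m)$, not merely $s=o(m)$; for $s$ near $n^{2/3}$ you pick up a factor $e^{\Theta(s^2/m)}$ which has to be absorbed into $\rho^s$, and this only works on a range $s\ll mQ^2/R$ that can be much smaller than $n^{2/3}$. Second, and more seriously, the positive-excess case is not proved. ``Sharp asymptotics for the number of connected configurations with prescribed degrees and excess $\ell$'' is not a result one can just quote at the generality you need (varying degree sequence, $\Delta\to\infty$, uniformity in $s$ and $\ell$); your own lossy bound overcounts by the number of spanning trees and is indeed too weak. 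The paper's exploration argument bypasses this completely: a component with $s$ vertices and excess $\ell$ has $s-1+\ell$ edges, and the walk bound is in terms of edges, so the contribution is automatically damped by $\rho^\ell$. Finally, note that your far-tail step (``component-exploration random walk and Chernoff'') is precisely the paper's method used as a black box, so your enumerative argument is not self-contained anyway. If you want to salvage the enumerative route, the cleanest fix is to abandon the split by topology and bound $\bP(|\cC(v)|\ge k)$ directly by the exploration walk --- at which point you have rediscovered the paper's proof.
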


\begin{remark}
As noted in~\cite{hatami2012scaling}, under the condition $|Q|\Delta=o(R)$ the critical window is $|Q|=O(n^{-1/3}R^{2/3})$. Therefore, \cref{cor:CM2} bounds the largest component in the whole barely subcritical regime. See~\cref{sec:prev} for further discussion.
\end{remark}

%
%

\noindent Let $\eta\coloneqq \eta_n =\hD_n-2$ and consider its moment generating function
\begin{align}
\varphi(\theta) \coloneqq \varphi_n(\theta) = \E{e^{\theta \eta_n}} \label{def:MGF}\;.
\end{align}
\cref{cor:CM2} is in fact a consequence of a more general result that does not require a bound of $Q$ in terms of $R$. 
\begin{theorem}\label{thm:CM1}
Let $\epsilon>0$. Let $\bfd_n$ be a degree sequence satisfying \cref{assum:CM} and $\Delta_n \leq n^{1/6}$. Let $\theta_0\in (0,1)$ be the smallest solution $\theta$ of $\varphi'(\theta)=0$. Define
\begin{align}\label{def:T}
T_n\coloneqq \frac{1}{\log (\varphi(\theta_0))^{-1}}\cdot \log\left(\frac{(\log(\varphi(\theta_0))^{-1})^{3/2}}{\varphi''(\theta_0)^{1/2}} \E{D_n e^{\theta_0 D_n}}n\right)\;.
\end{align}
If $\theta_0 m\geq \omega(n)T_n$ for some $\omega(n)\to \infty$, then
\begin{align}\label{eq:first_bound}
	\p{L_1(\CMn(\bfd_n))\leq (1+\epsilon)T_n} =1-o(1)\;.
\end{align}
\end{theorem}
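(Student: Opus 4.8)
The plan is to bound $L_1(\CMn)$ by a first-moment (union bound) argument over potential large components, using the standard exploration/breadth-first-search process on the configuration model and controlling it by the random walk with increments distributed as $\eta_n = \hD_n - 2$. Concretely, I would run the usual component-exploration: reveal a vertex, expose its half-edges, pair them one at a time; the number of ``active'' half-edges after pairing $t$ stubs is dominated (as long as not too many stubs have been used, so that the size-bias has not yet been distorted) by a random walk $S_t = \sum_{j\le t}(\hat D^{(j)} - 2)$ with i.i.d.\ steps having the law of $\eta_n$. A component of order at least $k$ forces this walk to stay positive for of order $k$ steps, i.e.\ forces a large-deviation event $\{S_t \ge 0 \text{ for } t \le c\,k\}$ or, after truncating, $\{S_{c k} \ge -O(1)\}$. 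Since $\bE[\eta_n] = \varphi'(0) = Q_n < 0$, this is an exponentially unlikely event and the rate is governed by the exponential tilt: $\bP(S_N \ge 0) \le \varphi(\theta)^N$ for any $\theta\ge 0$, optimized at the minimizer $\theta_0$ of $\varphi$, giving decay $\varphi(\theta_0)^N$. Taking $N \asymp T_n$ and multiplying by $n$ (the union bound over starting vertices) gives the result, provided $n\,\varphi(\theta_0)^{c T_n}\to 0$; the precise constant $T_n$ in \eqref{def:T}, including the $\varphi''(\theta_0)^{1/2}$ and $\E{D_n e^{\theta_0 D_n}}n$ corrections, is exactly what one gets from a \emph{sharp} (local-CLT-type) large-deviation estimate for $\bP(S_N \in [-O(1), 0])$ rather than the crude Chernoff bound — this is where conditions \eqref{CM4} (lattice) and the moment control \eqref{CM5} enter.

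In more detail, the key steps, in order, would be: (1) \textbf{Reduction to a walk estimate.} Show that $\bP(L_1(\CMn) \ge k)$ is at most $n$ times the probability that the exploration from a fixed vertex survives $k$ steps, and that, up to using at most $O(T_n)$ stubs out of $m$ (which is where the hypothesis $\theta_0 m \ge \omega(n) T_n$ is used — it guarantees the size-biased-without-replacement steps are well-approximated by i.i.d.\ $\eta_n$ steps, with multiplicative error $1+o(1)$ in the relevant exponent), the exploration is stochastically dominated by the i.i.d.\ walk $S_t$. (2) \textbf{Sharp tail bound for the walk.} Prove that for $N = (1+\epsilon)T_n$ (possibly after absorbing constants), $\bP\big(\min_{t\le N} S_t \ge -C\big) = o(1/n)$, via exponential tilting by $\theta_0$: under the tilted measure the walk has mean-zero increments with variance $\varphi''(\theta_0)/\varphi(\theta_0)$, and a local limit theorem (valid on the lattice $\cL$ with step $h$, by \eqref{CM4}) controls the probability that the tilted walk lands in a bounded set, contributing the polynomial prefactor $\big(\log(\varphi(\theta_0))^{-1}\big)^{3/2}/\varphi''(\theta_0)^{1/2}$; the factor $\E{D_n e^{\theta_0 D_n}} n$ is the boundary/first-step correction coming from the size-biased first vertex (degree $D_n$, not $\hat D_n$) together with the union bound over the $n$ starting points. (3) \textbf{Assemble.} Combine (1) and (2): $\bP(L_1 \ge (1+\epsilon)T_n) \le n\cdot \bP(\text{walk survives } (1+\epsilon)T_n/O(1)\text{ steps}) = o(1)$, after checking that $\Delta_n \le n^{1/6}$ and \eqref{CM5} make all the error terms in the walk approximation negligible compared to the $\epsilon$ slack.

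I would expect the main obstacle to be step (2) together with the coupling error in step (1): getting the constant in $T_n$ \emph{exactly right} (so that the theorem is genuinely sharp, not merely order-correct) requires a large-deviation estimate with the correct polynomial prefactor, i.e.\ essentially a renewal/ladder-height analysis of the tilted walk $S_t$ — one must control not just $\bP(S_N \approx 0)$ but $\bP(S_t \ge 0 \ \forall t \le N)$, whose decay rate is still $\varphi(\theta_0)^N$ but whose prefactor involves the expected overshoot and the probability of never returning below $0$ under the tilt. Simultaneously, one must verify that replacing sampling-without-replacement (the true configuration-model pairing) by i.i.d.\ $\eta_n$-steps costs only a $(1+o(1))$ factor in the exponent over the first $O(T_n)$ steps; this is precisely the role of the hypothesis $\theta_0 m \ge \omega(n) T_n$, and making the $\theta_0$-dependence of this error explicit (since $\theta_0$ may tend to $0$ or $1$) is the delicate bookkeeping. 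The fourth-moment condition \eqref{CM5} and $\Delta_n\le n^{1/6}$ are what allow these errors, and the contribution of atypically high-degree vertices to $\varphi''(\theta_0)$ and to the truncation, to be absorbed.
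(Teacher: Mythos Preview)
Your outline is correct and matches the paper's strategy: exploration process, stochastic domination by an i.i.d.\ walk with step law $\eta_n$, exponential tilting at $\theta_0$, a local limit theorem on the lattice $\cL(\cdot,h)$ for the tilted walk, and a first-moment sum over starting vertices. Two implementation points differ from the paper and are worth knowing, since they make step~(2) much cleaner than the ladder-height route you anticipate.

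First, for the survival probability the paper does \emph{not} analyse $\bP(S_t\ge 0\ \forall t\le N)$ directly via renewal/overshoot arguments. Instead it bounds the hitting-time density via a cycle-lemma (Spitzer-type) inequality $\bP(\tau^s_W=t)\le \tfrac{s}{t}\,\bP(W_t=0)$, then tilts only the \emph{point} probability $\bP(W_t=0)=\varphi_\beta(\theta_0)^t e^{\theta_0 s}\bP(W_{\theta_0,t}=0)$, and finally applies the local limit theorem to the mean-zero tilted walk to get $\bP(W_{\theta_0,t}=0)\le \tfrac{2h}{\sqrt{2\pi t\,\varphi''(\theta_0)}}$. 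Summing $\tfrac{s}{t}\cdot\varphi(\theta_0)^t e^{\theta_0 s}\cdot t^{-1/2}$ over $t\ge(1+\epsilon)T$ and then over the $n$ starting vertices with $s=d_v$ produces exactly $\sum_v d_v e^{\theta_0 d_v}=n\,\E{D_n e^{\theta_0 D_n}}$, which is where that factor in $T_n$ comes from; the $(\log\varphi(\theta_0)^{-1})^{3/2}/\varphi''(\theta_0)^{1/2}$ prefactor drops out of the $t^{-3/2}\varphi(\theta_0)^t$ sum. This avoids any ladder analysis.

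Second, the without-replacement issue is handled not by a generic coupling bound but by passing to an explicit i.i.d.\ dominating step law $\beta$ (equal to $\eta$ rescaled by $m/(m-2T)$ with the mass difference dumped on $-1$), so that $\eta_t\preceq\beta$ deterministically for $t\le 2T$. One then checks $T_\beta=(1+o(1))T$; it is precisely this lemma that consumes the hypotheses $\theta_0 m\ge\omega(n)T_n$ and $\Delta_n\le n^{1/6}$ (the latter only when $R=O(\Delta|Q|)$), while \cref{assum:CM}\eqref{CM5} is used to control the third-moment error term $\gamma/\sigma^2 t$ in the local limit theorem.
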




\begin{remark}
 The value $\theta_0$ exists and is bounded as $n\to \infty$. We have $\varphi(0)=1$ and $\varphi'(0)=Q<0$.
Recall that $\eta$ is supported in $\{-1,0,1,\dots\}$.
By~\cref{assum:CM}, $\{D \geq 3\}$ happens with positive probability and so if we define $p:= \bP(D\geq 3)$, then $\bP(\eta\geq 1) \geq p$ and $\varphi(\theta) \geq (1-p)e^{-\theta} + pe^\theta$.
So $\varphi(\theta) \to \infty$ as $\theta \to \infty$ and it must at some point have positive derivative. Thus, there exists $\theta_0\in (0,1)$ such that $\varphi'(\theta_0)=0$.
\end{remark}


%

It is interesting to understand if these results also hold in the uniform setting. One can use the following result to transfer from $\CMn$ to $\Gn$.
\begin{theorem}[Janson~\cite{janson2009probability}]\label{thm:simple}
Let $\bfd_n$ be a degree sequence satisfying $m=\Theta(n)$ and $\E{D^2_n}=O(1)$. Then 
\begin{align}\label{eq:simple}
\p{\CMn(\bfd_n) \text{ simple}}= \exp\left(-\frac{1}{m}\sum_{i\in [n]} d_i^2\right) > 0\;,
\end{align}
and conditioned on being simple, $\CMn$ has the same law as $\Gn$. Therefore, any result that holds with probability $1-o(1)$ for $\CMn(\bfd_n)$, also holds with probability $1-o(1)$ for $\Gn(\bfd_n)$.
\end{theorem}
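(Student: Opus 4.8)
I would follow the classical argument of Bollob\'as and of Janson--\L uczak adapted to fixed degree sequences, as in~\cite{janson2009probability}. The statement has three parts, and I would treat them in this order: (a) an asymptotic evaluation of $\p{\CMn(\bfd_n)\text{ simple}}$, showing in particular that it is bounded away from $0$; (b) a symmetry argument identifying the conditional law of $\CMn(\bfd_n)$ given simplicity; and (c) the transfer of $1-o(1)$ statements, which is then immediate.

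For (a), let $X_1$ be the number of loops and $X_2$ the number of (unordered) pairs of vertices joined by at least two parallel edges in $\CMn(\bfd_n)$. Since a fixed pair of half-edges is matched with probability $1/(m-1)$, a direct count gives
\begin{align*}
\E{X_1}=\frac{1}{m-1}\sum_{i\in[n]}\binom{d_i}{2}\longrightarrow \lambda_1:=\frac{1}{2m}\sum_{i\in[n]}d_i(d_i-1),\qquad \E{X_2}\longrightarrow \lambda_2:=\lambda_1^2,
\end{align*}
and both limits are finite because $m=\Theta(n)$ and $\E{D_n^2}=O(1)$ force $\sum_i d_i^2=O(n)$ (hence also $\Delta=O(\sqrt n)$). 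More generally, I would compute the joint factorial moments of $(X_1,X_2)$: the leading contribution to the $(a,b)$-th joint factorial moment comes from $a$ loops and $b$ double edges using pairwise disjoint sets of half-edges, and the moment hypothesis makes all overlapping or degenerate configurations (a vertex carrying two loops, a loop meeting a multi-edge, a triple edge, etc.) contribute a negligible correction. By the method of moments this yields $(X_1,X_2)\inlaw(\Po(\lambda_1),\Po(\lambda_2))$ with the two coordinates independent, and the number of edges of multiplicity $\geq 3$ tends to $0$ in probability; therefore
\begin{align*}
\p{\CMn(\bfd_n)\text{ simple}}=\p{X_1=0,\,X_2=0}+o(1)\longrightarrow e^{-\lambda_1-\lambda_2}>0,
\end{align*}
so in particular this probability is bounded below by a positive constant. (The exact limiting constant is expressible through $\lambda_1$; only its positivity matters below, and it can be lower bounded by the explicit quantity in the statement.)

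For (b), fix any simple graph $G$ on $[n]$ with degree sequence $\bfd_n$. Each half-edge incident to a vertex $i$ is, in a configuration projecting onto $G$, matched inside exactly one edge of $G$ incident to $i$; conversely, any choice of a bijection between the half-edges at $i$ and the edges of $G$ at $i$, made independently at each vertex, determines a unique such configuration. Hence exactly $\prod_{i\in[n]}d_i!$ of the $(m-1)!!$ equiprobable configurations produce $G$, and this number does not depend on $G$. Therefore the conditional law of $\CMn(\bfd_n)$ given that it is simple is uniform over simple graphs with degree sequence $\bfd_n$, i.e.\ it is the law of $\Gn(\bfd_n)$. For (c): if $\cA_n$ is an event with $\p{\CMn(\bfd_n)\in\cA_n}=1-o(1)$, then by (b) and the lower bound from (a),
\begin{align*}
\p{\Gn(\bfd_n)\notin\cA_n}=\p{\CMn(\bfd_n)\notin\cA_n \mid \CMn(\bfd_n)\text{ simple}}\leq \frac{\p{\CMn(\bfd_n)\notin\cA_n}}{\p{\CMn(\bfd_n)\text{ simple}}}=\frac{o(1)}{\Omega(1)}=o(1).
\end{align*}

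The only genuinely delicate point is the Poisson approximation in (a): one must verify that the factorial-moment expansion is dominated by the ``generic'' disjoint configurations and that every overlapping or higher-multiplicity term is negligible. This is exactly where $\E{D_n^2}=O(1)$ (equivalently $\sum_i d_i^2=O(m)$, forcing $\Delta=O(\sqrt n)$) is used; without control of the second moment the counts $X_1,X_2$ need not be tight and the simplicity probability can decay. All the estimates involved are routine under the stated hypotheses, so I would state them and defer the bookkeeping to the cited reference.
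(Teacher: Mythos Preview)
The paper does not prove this theorem at all: it is quoted verbatim from Janson~\cite{janson2009probability} and used as a black box, so there is no ``paper's own proof'' to compare against. Your sketch is the standard Bollob\'as--Janson argument (Poisson approximation for loops and multi-edges via factorial moments, plus the elementary symmetry count $\prod_i d_i!$ for part (b)), and it is correct in outline and in the identification of where the hypothesis $\E{D_n^2}=O(1)$ enters.

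One small remark on the constant: the classical limit is $e^{-\lambda_1-\lambda_2}$ with $\lambda_1=\nu/2$, $\lambda_2=\nu^2/4$ for $\nu=\sum_i d_i(d_i-1)/m$, which is not literally $\exp(-\tfrac{1}{m}\sum_i d_i^2)$ as written in the displayed formula~\eqref{eq:simple}. You implicitly noticed this, since you wrote that the limiting constant ``can be lower bounded by the explicit quantity in the statement'' rather than equals it; that hedge is appropriate. For the application in this paper only the positivity (boundedness away from $0$) matters, and your argument delivers that cleanly.
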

As \cref{cor:CM2} and \cref{thm:CM1} assume that $Q_n\leq 0$, we have that $\E{D^2_n}=O(1)$ and we can use \cref{thm:simple} to transfer their conclusions to $\Gn$, provided that their hypothesis are satisfied.

The second part of our paper focuses on the size of the largest component in the (barely) subcritical regime of $\Gn$ without further assumptions on the degree sequence. The lack of a tractable model for $\Gn$ hampers its analysis and the upper bounds obtained are weaker than the ones obtained for $\CMn$ and probably not of the right order.

Let $S_*$ be a smallest set of vertices of largest degree that satisfies
\begin{align}\label{eq:neg_set}
\sum_{u\in [n]\setminus S_*} d_u(d_u-2) \leq 0
\end{align}
and define
\begin{align}\label{eq:surplus}
m_*=\sum_{v\in S_*} d_v \;.
\end{align}
In particular, if $Q\leq 0$, then $S_*=\emptyset$ and $m_*=0$.

For any $m_0\geq 0$ and $Q_0\leq 0$, we call $\bfd_n$ an \emph{$(m_0,Q_0)$-subcritical} degree sequence if there exists $S\subseteq [n]$ with $\sum_{v\in S}d_v\leq m_0$ and 
\begin{align*}
\frac{1}{m}\sum_{w\in [n]\setminus S} d_w(d_w-2) \leq Q_0\;.
\end{align*}

Our most general result on $\Gn$ is the following.
\begin{theorem}\label{thm:UM1}
Let $\bfd_n$ be an $(m_0,Q_0)$-subcritical degree sequences for some parameters satisfying $m_0\geq 3m_*$, $m_0|Q_0|\geq (\Delta |Q_0|+R)\log\left(\frac{nQ_0^2}{\Delta|Q_0|+R}\right)$ and $Q_0^2 n \geq \omega(n) m_0$ for some $\omega(n)\to \infty$. Then,
\begin{align*}
	\pl{L_1(\Gn(\bfd_n))=O(m_0/|Q_0|)}=1-o(1)\;.
\end{align*}
\end{theorem}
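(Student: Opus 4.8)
The plan is to bound, for each fixed vertex $v$, the probability that its component $\cC(v)$ in $\Gn(\bfd_n)$ has more than $K\coloneqq Cm_0/|Q_0|$ vertices for a suitably large absolute constant $C$, and then take a union bound over $v$; the two numerical hypotheses are what will make the per-vertex bound small enough for this. The difference from the configuration model is that the full degree sequence may have unbounded second moment --- the vertices of $S$ can contribute arbitrarily much to $\sum_i d_i^2$ --- so one cannot appeal to \cref{thm:simple} to transfer a $\CMn$ estimate directly. To get around this I would first expose all edges of $\Gn$ incident to a carefully chosen heavy set, and only then run the component exploration on the remaining, well-behaved, graph.

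Concretely, I would let $S\subseteq[n]$ be a set of largest-degree vertices witnessing the $(m_0,Q_0)$-subcriticality of $\bfd_n$, chosen large enough --- here $m_0\ge 3m_*$ is used --- that $S\supseteq S_*$, so that deleting $S$ leaves a residual sequence that is still subcritical. Expose all edges of $\Gn$ with an endpoint in $S$, write $\delta_w$ for the number of these incident to $w\notin S$, and set $d'_w\coloneqq d_w-\delta_w$. Since lowering degrees can only decrease $\sum_{w\notin S}d_w(d_w-2)$ up to an $O(m_0)$ correction from vertices dropping to degree at most $1$, and since $m'\coloneqq\sum_{w\notin S}d'_w=m(1-o(1))$, the residual sequence $\bfd'$ satisfies $\tfrac1{m'}\sum_w d'_w(d'_w-2)\le Q_0(1+o(1))\le0$ and hence $\tfrac1{m'}\sum_w(d'_w)^2\le2+o(1)$. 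Conditioned on the exposed edges, the rest of $\Gn$ is distributed as $\Gn(\bfd')$, which now has bounded second moment, so \cref{thm:simple} does apply to $\bfd'$ and permits a comparison of $\Gn(\bfd')$ with $\CMn(\bfd')$ up to a constant factor in probability. A first-moment computation --- using $\sum_{w\notin S}d_w^2\le2m$, which is again just residual subcriticality --- should give that with high probability $\sum_{w\notin S}\delta_w d'_w=O(m_0)$.

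Next I would explore $\cC(v)$ by a breadth-first search in the residual graph, and whenever a residual vertex adjacent to some $s\in S$ is reached, pull in $s$ together with all of its (already exposed) neighbours, adding their residual half-edges to the frontier. Let $W_t$ be the number of active residual half-edges after $t$ steps. Two estimates drive the analysis. First, the total number of half-edges ever injected by the pulled-in $S$-cliques is $O(m_0)$, since each $s\in S$ is pulled in at most once and the bound of the previous paragraph applies; thus $W_t\le d_v+O(m_0)+\sum_{s\le t}\xi_s$, with $\xi_s$ the increment of the $s$-th residual step. Second, using residual subcriticality and the fact that at most $K=O(|Q_0|n/\omega(n))=o(|Q_0|m)$ residual half-edges are ever exposed --- this is where $Q_0^2n\ge\omega(n)m_0$ enters --- every residual step has conditional drift at most $Q_0(1+o(1))$ and conditional variance $O(R+1)$, uniformly along the exploration. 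Writing $\beta_t$ for the number of residual steps up to time $t$, we get $\sum_{s\le t}\xi_s=M_t-|Q_0|(1-o(1))\beta_t$ with $M_t$ a martingale of increments $O(\Delta)$ and predictable quadratic variation $O((R+1)\beta_t)$. If the exploration survives past $K$ steps then $M_t\ge c\,m_0$ for some such $t$ and an absolute $c>0$, and Freedman's inequality bounds the probability of this by $\exp\!\bigl(-\Omega\bigl(\tfrac{m_0|Q_0|}{\Delta|Q_0|+R}\bigr)\bigr)$; the hypothesis $m_0|Q_0|\ge(\Delta|Q_0|+R)\log\!\bigl(\tfrac{nQ_0^2}{\Delta|Q_0|+R}\bigr)$ is designed to make this small enough that the union bound over $v$ closes, yielding $L_1(\Gn(\bfd_n))=O(m_0/|Q_0|)$ with high probability.

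The hardest part will be the bookkeeping for $S$: a single vertex of $S$ can have enormous degree and so can merge many residual components into $\cC(v)$ at once, and the whole argument rests on the fact that the \emph{total} extra contribution of $S$ over the entire exploration is only $O(m_0)$. Exposing the $S$-edges \emph{before} exploring --- rather than meeting $S$-vertices as heavy-tailed jumps during the walk --- is what makes this quantitative, and it is also what keeps the per-step variance at $O(R)$ rather than $O(\Delta^2)$, which is essential for the Freedman estimate to give the claimed bound. The secondary technical point will be checking the drift and variance bounds uniformly along the exploration (i.e.\ that exposing the $O(m_0/|Q_0|)$ residual half-edges of $\cC(v)$ moves the relevant size-biased averages by only a $(1+o(1))$ factor), which is exactly what $Q_0^2n\ge\omega(n)m_0$ secures. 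By contrast, the comparison with the configuration model should \emph{not} be an obstacle: once $S$ is removed the residual sequence has bounded second moment, so \cref{thm:simple} applies off the shelf.
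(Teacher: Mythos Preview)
Your approach is sound in outline but differs from the paper's in one structural choice that is worth highlighting.

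The paper does \emph{not} expose the $S$-edges first and then reduce to a residual $\Gn(\bfd')$ to which Janson's \cref{thm:simple} applies. Instead it works directly in $\Gn(\bfd_n)$ throughout, and deals with the heavy set $S$ by simply starting the exploration from $V_0=S\cup\{v\}$. This makes the initial frontier $X_0\le\sum_{u\in V_0}d_u\le 2m_0$ \emph{deterministically}, so the whole ``total contribution of $S$ is $O(m_0)$'' issue you flag as the hardest part disappears: once $S$ is in $V_0$, the process never discovers a vertex of $S$ again, and the entire $S$-contribution is already encoded in $X_0$. The price the paper pays is that it cannot appeal to \cref{thm:simple} or to the configuration model at all; it must control the one-step law of the exploration directly in $\Gn$, which it does via a switching lemma (their Lemma~4.2) showing $\bP_t(w_{t+1}=w)\le(1+o(1))d_w/M_t$. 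From that lemma the drift bound $\bE_t[\eta_{t+1}]\le Q_0/2$ and the variance bound $\bE_t[\eta_{t+1}^2]\le 4R$ follow immediately, and then Freedman's inequality with $\alpha\asymp |Q_0|T$, $\beta\asymp RT$, $c=\Delta$ closes the union bound exactly as you describe.

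What each route buys: your reduction to $\CMn(\bfd')$ avoids proving the switching lemma (the main technical ingredient of the paper's Section~4), at the cost of the two-stage conditioning and of controlling the random residual sequence $\bfd'$ and the random quantity $\sum_{w}\delta_w d'_w$. Note that a pure first-moment (Markov) argument only gives $\sum_{w}\delta_w d'_w\le Cm_0$ with probability $1-O(1/C)$, not $1-o(1)$; this is still enough for the stated $O(m_0/|Q_0|)$ conclusion since the implicit constant is unspecified, but it is weaker than what you wrote. The paper's route sidesteps all of this: by absorbing $S$ at time zero it trades the whole residual-sequence bookkeeping for a single switching estimate, and never needs to touch the configuration model.
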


\begin{remark}[Infinite degree variance]
The main strength of \cref{thm:UM1} is that it applies to degree sequences with subcritical behaviour but infinite degree variance. To our knowledge, the only results available in this setting are of the form $L_1(\Gn(\bfd_n))=o(n)$~\cite{bollobas2015old,jprr2018}. Note that, even if such results were available for $\CMn$,~\cref{thm:simple} is not strong enough to transfer them to $\Gn$.
\end{remark}

\begin{remark}[The $Q\leq 0$ case]\label{rem:Q<0}
To compare it with previous work, let us get more explicit results for the case $Q\leq 0$ (i.e. $\bE[D^2_n]\leq 2\bE[D_n]$). In this case, $m_*=0$ and we can choose $Q_0\coloneqq Q$ and $m_0\coloneqq (\Delta+R/|Q|)\log\left(\frac{nQ_0^2}{\Delta|Q_0|+R}\right)$. Also note that $Q\leq 0$ implies $R=O(\Delta)$.
\begin{itemize}

\item[(a)] If $|Q|$ is bounded away from zero, then all conditions in \cref{thm:UM1} are satisfied and
\begin{align}\label{eq:UB_a}
L_1(\Gn(\bfd_n))=O(\Delta \log n) \;.
\end{align}
%
%

\item[(b)] If $|Q|=o(1)$, then we split depending on how $R$ and $\Delta|Q|$ compare to each other.
\begin{itemize}
\item[(b.1)] If $\Delta|Q|=O(R)$, then for any $Q\leq -\omega(n) n^{-1/3} R^{1/3}$,
\begin{align}\label{eq:UB_b1}
L_1(\Gn(\bfd_n))=O\left(\frac{R}{Q^2}\log\left(\frac{nQ^2}{R}\right)\right)\;,
\end{align}
obtaining a weaker version of \cref{cor:CM2}, that is valid for all degree sequences.

\item[(b.2)] If $R=O(\Delta |Q|)$, then for any $Q\leq -\omega(n) n^{-1/2} \Delta^{1/2}$, 
\begin{align}\label{eq:UB_b2}
L_1(\Gn(\bfd_n))=O\left(\frac{\Delta}{|Q|}\log\left(\frac{nQ}{\Delta}\right)\right)\;.
\end{align}
\end{itemize}
\end{itemize}

\end{remark}

\begin{remark}
If $Q_0^2 n =O(m_0)$, then the behaviour of $\Gn$ is no longer (barely) subcritical. It is interesting to study the size of the largest component in this case.
\end{remark}

We finally provide the existence of infinitely many degree sequences that show the tightness of some of our upper bounds.
\begin{proposition}\label{prop:UM2}
For any $Q<0$, $\Delta= o(\sqrt{n})$ and $\log n=o(\Delta)$, there exists a degree sequence $\widehat\bfd_n$ with $\Delta_n(\widehat\bfd_n)=\Delta$, $Q_n(\widehat\bfd_n)\sim Q$ and $R=R_n(\widehat\bfd_n)\sim \Delta$, such that
\begin{align*}
	\pl{L_1(\Gn(\widehat\bfd_n)) \geq (1+o(1))\frac{2R}{Q^2}\log\left(\frac{n}{R^2}\right)}=1-o(1)\;.
\end{align*}
\end{proposition}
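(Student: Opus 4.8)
The plan is to exhibit an explicit degree sequence and then obtain the lower bound on $L_1(\Gn(\widehat\bfd_n))$ via a second-moment / comparison argument. First I would construct $\widehat\bfd_n$: take most vertices to have degree $1$ and $3$ in carefully balanced proportions so that the bulk contributes essentially all of $R$, and insert a small number $s$ of vertices of degree exactly $\Delta$, where $s$ is chosen so that the degree-$\Delta$ vertices contribute a $o(1)$ amount to $Q_n$ and to $R_n$; concretely $s \asymp \Delta^{-2}\cdot(\text{something})$, small enough that $R_n \sim \Delta$ still comes from the degree-$3$ vertices (each contributing $3\cdot 1 = 3$ to $mR_n = \sum d_i(d_i-2)^2$, so we need on the order of $n$ of them) while $\Delta = o(\sqrt n)$ keeps $mQ_n = \sum d_i(d_i-2)$ controlled and lets us tune the number of degree-$1$ versus degree-$3$ vertices to realize $Q_n(\widehat\bfd_n)\sim Q$. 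One checks graphicality (Erd\H{o}s--Gallai, or just note that with this many small-degree vertices the sequence is easily graphical) and that $m = \Theta(n)$, $\E{D_n^2}=O(1)$, so \cref{thm:simple} is available if convenient, though for the lower bound we will likely work directly in $\Gn$.

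The core of the argument is the matching lower bound $L_1 \geq (1+o(1))\tfrac{2R}{Q^2}\log(n/R^2)$. The heuristic is that in the subcritical regime the exploration process from a typical vertex is a killed random walk whose step distribution is $\eta_n = \hD_n - 2$ with drift $Q<0$ and variance $\approx R$; the probability that such a walk survives for $t$ steps decays like $\exp(-\Theta(Q^2 t / R))$ by a Cram\'er / Gaussian-approximation estimate (this is exactly the regime where $\varphi(\theta_0)$ in \cref{thm:CM1} satisfies $\log\varphi(\theta_0)^{-1}\sim Q^2/(2R)$). So among $n$ starting vertices, the largest exploration tree has size $t^*$ with $n\exp(-Q^2 t^*/(2R))\asymp 1$, i.e. $t^* \sim \tfrac{2R}{Q^2}\log n$; the $\log(n/R^2)$ refinement comes from tracking the polynomial prefactor in the local CLT for the walk. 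To make this rigorous as a lower bound I would: (1) restrict attention to the subgraph induced by the degree-$1$ and degree-$3$ vertices (discarding the $s$ high-degree vertices, which only helps), where the configuration model exploration is a genuine i.i.d.-step walk; (2) show that with probability $1-o(1)$ at least one vertex launches an exploration surviving $(1-\epsilon)t^*$ steps, using a second-moment argument over the $n$ vertices — the first moment is $n\cdot\p{\text{walk survives}}\to\infty$ by the choice of $t^*$, and pairwise correlations are negligible because two explorations are essentially independent until they collide, which is unlikely in the subcritical phase; (3) transfer from $\CMn$ to $\Gn$ via \cref{thm:simple}, which applies since $\E{D_n^2}=O(1)$.

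The main obstacle is step (2): proving the survival probability of the exploration walk has the sharp form $\exp(-(1+o(1))Q^2 t/(2R))$ with the correct polynomial correction, and controlling the second moment. The subtlety is that $Q = o(1)$ and $R = \Theta(\Delta)$ may both vary with $n$, so the large-deviation estimate must be uniform in these parameters — this is where the hypotheses $\log n = o(\Delta)$ and $\Delta = o(\sqrt n)$ enter, ensuring respectively that the target length $t^*\asymp \frac{R}{Q^2}\log n$ is large compared with $R/Q^2$ (so the CLT kicks in) and that $\Delta$ is small enough that a single high-degree vertex cannot short-circuit the walk. I expect the cleanest route is a tilting argument: change measure by $\erm^{\theta_0\eta}$ to make the walk have zero drift, apply a local CLT for the tilted walk to estimate the return/survival probability, and read off the exponent $\log\varphi(\theta_0)$ together with the $\varphi''(\theta_0)^{1/2}$ prefactor — mirroring the quantity $T_n$ from \cref{thm:CM1} but now used as a lower bound rather than an upper bound. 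The second-moment computation then reduces to showing that the expected number of \emph{pairs} of surviving explorations that intersect is $o$ of the square of the first moment, which follows from the standard fact that in the subcritical configuration model the exploration trees from two uniformly chosen vertices are contiguous with independent ones up to the relevant scale.
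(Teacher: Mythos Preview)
Your construction is inconsistent and your main argument cannot reach the target size. Degree-$3$ vertices contribute $3\cdot 1=3$ each to $\sum d_i(d_i-2)^2$, so any degree-$\{1,3\}$ bulk gives $R=O(1)$, not $R\sim\Delta$. In the paper's construction there are \emph{no} degree-$3$ vertices at all: $\widehat\bfd_n$ consists of $n-\ell$ vertices of degree $1$ and $\ell=\lfloor(1-\epsilon)n/\Delta^2\rfloor$ vertices of degree $\Delta$, and $R\sim\Delta$ comes entirely from the latter (each contributing $\approx\Delta^3$, totalling $\approx n\Delta$, divided by $m\approx n$). Consequently your step (1), ``discard the high-degree vertices and analyse the walk on the degree-$1$/degree-$3$ core'', throws away precisely the structure responsible for the large component: on that core the exploration-walk heuristic gives survival time $\sim \tfrac{2R'}{(Q')^2}\log n$ with $R'=O(1)$, i.e.\ components of order $O(\log n)$, whereas the target is $\tfrac{2R}{Q^2}\log(n/R^2)\asymp \Delta\log n$, larger by a factor $\Delta$.

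The paper's route is quite different from a walk-survival argument. One looks at the random graph $\bG_*$ induced by $\CMn(\widehat\bfd_n)$ on the set $L$ of the $\ell$ high-degree vertices and shows, by directly computing pairing probabilities, that for any fixed tree $T$ on $s=O(\log\ell)$ vertices the probability that $T$ occurs as an \emph{isolated} induced subgraph in $\bG_*$ is asymptotically the same as in the Erd\H{o}s--R\'enyi graph $G(\ell,p_*)$ with $p_*=\Delta^2/n=(1-\epsilon)/\ell$. A second-moment (Paley--Zygmund) computation on the number of isolated trees of size $s_0=\lfloor a\log\ell\rfloor$ with $a<I_{1-\epsilon}^{-1}$ then transfers the classical subcritical $G(n,p)$ result, yielding w.h.p.\ an isolated tree of size $s_0$ inside $L$. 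Each of its $s_0$ vertices has $\Delta-O(1)$ remaining half-edges paired to degree-$1$ leaves, so the corresponding component of $\CMn$ has order $(1+o(1))\Delta s_0\sim\tfrac{2\Delta}{\epsilon^2}\log\ell\sim\tfrac{2R}{Q^2}\log(n/R^2)$; finally one passes to $\Gn$ via \cref{thm:simple}. The large component is thus a ``hub tree with leaves'', not a long survival path of a bulk exploration, and the tilting/local-CLT machinery you propose plays no role here.
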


\begin{remark}\label{rem:LB}
We can compare the lower bound in~\cref{prop:UM2} with our upper bounds.
The degree sequence $\widehat\bfd_n$ satisfies $R\sim \Delta$, so $\Delta |Q|=O(R)$. In the case $\Delta<n^{1/2-\delta}$ for some constant $\delta>0$, the proposition gives a family of degree sequences for which~\cref{eq:UB_a} is of the right order.

While \cref{prop:UM2} is only stated for $Q$ bounded away from zero, one could similarly define degree sequences $\widehat \bfd_n$ for which $Q=o(1)$, in which case $\Delta |Q|=o(R)$. Provided that $Q\leq -\omega(n)n^{-1/3}R^{2/3}$ for some $\omega(n)\to \infty$, one obtains the lower bound in \cref{eq:final} that coincides asymptotically with \cref{cor:CM2} and, up to logarithmic terms, with~\cref{eq:UB_b1}. (See \cref{rem:Qto0}.)
\end{remark}

\subsection{Previous work}\label{sec:prev}

The foundational paper of Erd\H os and R\'enyi~\cite{erdHos1960evolution} located the phase transition for the existence of a linear order component in a uniformly chosen graph on $n$ vertices and $m$ edges, $\dsG(n,m)$, showing that the order of the largest component undergoes a double jump at $m=n/2$, in particular $L_1(\dsG(n,m))=O(\log{n})$ if $m\leq  cn$ and $c<1/2$, $L_1(\dsG(n,m))=\Theta(n^{2/3})$ if $m=  n/2$,  and $L_1(\dsG(n,m))=\Theta(n)$ if $m\leq  cn$ and $c>1/2$. This result can be easily transferred to the Binomial random graph $\dsG(n,p)$ with $p=2m/n$, which has become the reference model for random graphs. The size of the largest component in all regimes is well understood, see e.g. Sections 4 and 5 in~\cite{vanderhofstad2016}.

The study of the phase transitions in random graphs with given degree sequences was pioneered by Molloy and Reed~\cite{molloy1995critical}. The so-called \emph{Molloy-Reed criterion} determines the phase transition at $Q=0$, provided that the degree sequence satisfies a number of technical conditions. The criterion has been extended to degree sequences with bounded degree variance~\cite{janson2009new} and uniformly integrable sequences~\cite{bollobas2015old}, providing the asymptotic value of $L_1$ in the supercritical regime $Q>0$ in terms of the survival probability of an associated branching process, similarly as in the $\dsG(n,p)$ case. Interestingly, the criterion is no longer valid for general degree sequences due to the presence of high degree vertices (hubs) or an extremely large number of degree $2$ vertices. Joos et al.~\cite{jprr2018} gave an extended criterion that determines whether any given degree sequence typically produces a linear order component.

While the behaviour of the largest component in the supercritical regime resembles the simpler Erd\H os-R\'enyi model, this does not happen in the subcritical one, when $Q<0$. Trivially, we have $L_1(\Gn)\geq \Delta+1$ which could be much larger than logarithmic. In~\cite{molloy1995critical} the authors showed that $L_1(\CMn)=O(\Delta^2 \log{n})$ for subcritical sequences. More precise results are known for power-law degree sequences. Durrett~\cite{durrett2007random} conjectured\footnote{In fact, this was conjectured for a slightly different model where the degrees are i.i.d. copies of $D_n$ conditioned on their sum being even.} that if $\p{D_n=k}\sim c k^{-\gamma}$ for some $\gamma>3$ and $c>0$, then $L_1(\CMn)=O(\Delta)$. In this setting, $\gamma>3$ implies $\E{D_n^{\gamma-1}}=O(1)$. Pittel~\cite{pittel2008largest} showed that $L_1(\CMn)=O(\Delta\log n)$ for subpower-law distributions. Janson~\cite{janson2008largest} proved a strong version of the conjecture: if $\p{D_n\geq k}= O(k^{1-\gamma})$ for some $\gamma>3$, then
\begin{align}\label{eq:Janson_bound}
L_1(\CMn) = \frac{\Delta}{|Q|}+o(n^{1/(\gamma-1)})
\end{align}
For power-law distributions, we have $\Delta=\Theta(n^{1/(\gamma-1)})$ with high probability, and the second term is negligible. From the intuitive point of view, the largest component is obtained by starting a subcritical branching process with expected offspring $1+Q$ from each vertex adjacent to the vertex of largest degree. The expected total progeny of such process is $1/|Q|$. 
One can interpret the result of ~\cref{thm:UM1} in a similar spirit: in the largest component there might be at most $O(m_0)$ edges and from each of these edges a piece of size $O(1/|Q_0|)$ hangs. For $Q<0$,~\eqref{eq:Janson_bound} can be compared to the weaker bound \cref{eq:UB_a} that holds regardless of the shape of the degree sequence tail. As shown in~\cref{rem:Q<0}, for general degree sequences~\cref{eq:UB_a} cannot be  improved.


The critical regime has attracted a lot of interest in recent years~\cite{dhara2017critical,dhara2020heavy,hatami2012scaling,van2019component, joseph2010component,riordan2012phase} with several papers specialising on the finite second or finite third moment cases. Here we focus on the results known for the barely subcritical regime. 

Riordan~\cite{riordan2012phase} showed that if $\Delta=O(1)$ then~\cref{eq:second_bound} holds, even more, one has asymptotic equality and control on the second order term. \cref{thm:CM1} can be seen as a generalization of the upper bound in~\cite{riordan2012phase} to a wider class of sequences that allows $\Delta\to \infty$ as $n\to \infty$.

Hatami and Molloy~\cite{hatami2012scaling} studied the critical window under some mild conditions on the degree sequence. 
They showed that $|Q|=O(n^{-1/3}R^{2/3})$ is the critical window of $\CMn$. Regarding the barely subcritical regime, for $Q\leq  - \omega(n) n^{-1/3}R^{2/3}$ with $\omega(n) \to \infty$, they showed that
\begin{align}\label{eq:result_HM}
L_1(\CMn)&=O\left(\sqrt{\frac{n}{|Q|}}\right)\;.
\end{align}
One can check that~\cref{eq:result_HM} coincides in order with~\cref{eq:second_bound} at the boundary of the critical window $|Q|=\Theta(n^{-1/3}R^{2/3})$, while~\cref{eq:second_bound} improves~\cref{eq:result_HM} in the whole barely subcritical regime, provided that \cref{assum:CM} holds.

Under infinite variance, the probability of $\CMn$ being simple can be exponentially small in $n$. Thus, only results that hold with exponentially high probability can be transferred from $\CMn$ to $\Gn$, see e.g.~\cite{bollobas2015old}. Another approach is to study $\Gn$ directly using the \emph{switching method}~\cite{jprr2018}. In both cases, the best bound given in the subcritical regime is $L_1(\Gn)=o(n)$. 
\cref{thm:UM1} provides the first explicit general bound to $L_1$ at subcriticallity  for infinite variance degree sequences. As discussed in \cref{rem:LB}, this bound cannot be substantially improved without further assumptions. 

It thus remains as an open question to determine the exact size of the largest component in the (barely) subcritical regime. 
Hofstad, Janson and \L uczak conjectured that $L_1(\CMn)$ is concentrated in this regime~\cite{van2019component}. Supported by the result of Riordan for constant maximum degree, we conjecture that the upper bound in~\cref{eq:first_bound} is asymptotically tight for all degree sequences that satisfy some assumption. Note that certain condition on the degree sequence is needed, as for some particular subcritical degree sequences $L_1(\CMn)$ is non-concentrated (see \cref{rem:non_concentrated}).
%
%

%
%

\section{A local limit theorem}
A local limit theorem estimates the probability distribution of a suitably rescaled sum of independent random variables, by the density function of a Gaussian random variable. Local limit theorems are a useful tool to determine the component size in random graphs~\cite{nachmias2010critical,riordan2012phase}. For our application, we will need the step distribution to allow for the existence of very large degrees, as well as the fact that the degree sequence may be supported on an lattice with step different than $1$. This prevents us from using classical results such as Berry-Esseen Theorem (see~\cite[Theorem 3.4.9]{Durrett}). Our goal is to develop a precise local limit theorem which will allow us to deal with our step distributions. Our result is based on previous local limit theorems by Doney~\cite{Doney} and Mukhin~\cite{Mukhin1,Mukhin2} from which we derive more explicit error bounds. In particular the main result of this section is following,

\begin{theorem}\label{thm:lllt}
 Let $X_1, X_2, \ldots, X_n$ be independent and identically distributed random variables taking values on $\cL(v_0, h)$.
 Define $S_n = \sum_{i=1}^n X_i$.
 Suppose that $\mu = \bE(X_1)=0$, $\sigma^2 = \Vv{X_1}$ and $\gamma = \bE|X_1|^3$, and let $\varphi(t)$ be the characteristic function of $X_1$. Then, 
\begin{align}\label{eq:thm:llt}
 \sup_{ w \in\cL(nv_0, h)} \left| \bP(S_n = w) - \frac{h}{\sqrt{2\pi n \sigma^2}} \exp \left(-\frac{w^2}{2n\sigma^2}\right) \right| \leq \frac{32h\gamma}{\sigma^4 n} + \frac{h}{\pi} \int_\frac{\sigma^2}{4\gamma}^\frac{\pi}{h} |\varphi(t)|^n dt.
\end{align}
\end{theorem}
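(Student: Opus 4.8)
The plan is to run the standard Fourier-inversion argument for lattice random variables but to keep every error term explicit, splitting the integration over the dual torus $[-\pi/h,\pi/h]$ into a central piece, where the characteristic function is controlled by a Gaussian approximation, and a tail piece, which is absorbed into the second term on the right-hand side of \eqref{eq:thm:llt}.

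First I would write the inversion formula: since $X_1$ takes values on $\cL(v_0,h)$, for $w\in\cL(nv_0,h)$ one has
\begin{align*}
\bP(S_n=w) = \frac{h}{2\pi}\int_{-\pi/h}^{\pi/h} \varphi(t)^n e^{-itw}\,dt,
\end{align*}
and similarly the Gaussian density on the lattice can be written (via Poisson summation, or directly) so that the target quantity $\frac{h}{\sqrt{2\pi n\sigma^2}}\exp(-w^2/2n\sigma^2)$ differs from $\frac{h}{2\pi}\int_{-\infty}^{\infty} e^{-n\sigma^2 t^2/2} e^{-itw}\,dt$ by an exponentially small amount (the other Poisson-summation terms), which is dominated by the first error term $32h\gamma/(\sigma^4 n)$. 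Subtracting, the difference is bounded by
\begin{align*}
\frac{h}{2\pi}\int_{|t|\le \delta} \left|\varphi(t)^n - e^{-n\sigma^2 t^2/2}\right| dt \;+\; \frac{h}{2\pi}\int_{\delta \le |t|\le \pi/h} |\varphi(t)|^n\,dt \;+\; \frac{h}{2\pi}\int_{|t|\ge \delta} e^{-n\sigma^2 t^2/2}\,dt,
\end{align*}
where I would choose the cutoff $\delta = \sigma^2/(4\gamma)$, exactly the lower limit appearing in \eqref{eq:thm:llt}. Note $\delta$ is a legitimate choice since $\gamma\ge\sigma^3$ forces $\delta\le 1/(4\sigma)\le$ (something comfortably inside the radius where the Taylor estimate below holds); one should check $\delta$ does not exceed the natural radius, using $|X_1|^3\ge |X_1|^2$-type bounds or simply carrying the constant.

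For the central integral I would use the Taylor expansion of $\log\varphi$: since $\mu=0$, $|\varphi(t)-1+\sigma^2t^2/2|\le \gamma|t|^3/6$, from which, for $|t|\le\delta=\sigma^2/(4\gamma)$, one gets $|\varphi(t)|\le e^{-\sigma^2t^2/2 + \gamma|t|^3/6}\le e^{-\sigma^2t^2/4}$ (using $\gamma|t|/6 \le \sigma^2/24 < 1/4\cdot\ldots$ — the constants need a little care but go through), and the standard inequality $|a^n-b^n|\le n|a-b|\max(|a|,|b|)^{n-1}$ with $a=\varphi(t)$, $b=e^{-\sigma^2t^2/2}$ yields a bound of the shape $n\cdot \gamma|t|^3/6 \cdot e^{-(n-1)\sigma^2t^2/4}$; integrating $\int_{\mathbb R} n|t|^3 e^{-c n t^2}\,dt = O(1/(n c^2))$ produces a term of order $h\gamma/(\sigma^4 n)$, and tracking the absolute constant gives the stated $32$. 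The third (Gaussian tail) integral over $|t|\ge\delta$ is $O\!\big(\frac{1}{\delta n\sigma^2} e^{-n\sigma^2\delta^2/2}\big)$, which is super-polynomially small in $n$ and hence also swallowed by $32h\gamma/(\sigma^4 n)$. The middle integral $\frac{h}{2\pi}\int_{\delta\le|t|\le\pi/h}|\varphi(t)|^n\,dt$ is kept as is: doubling it to cover both signs of $t$ and bounding $1/(2\pi)\le 1/\pi$ gives exactly the second term $\frac{h}{\pi}\int_{\sigma^2/4\gamma}^{\pi/h}|\varphi(t)|^n\,dt$ in \eqref{eq:thm:llt}.

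The main obstacle is bookkeeping rather than conceptual: making the Gaussian-approximation constant come out to exactly $32$ requires being careful about (a) whether $\delta=\sigma^2/4\gamma$ lies inside the range where $|\varphi(t)|\le e^{-\sigma^2 t^2/4}$ holds — this uses $\gamma\ge\sigma^3$ and possibly a sharper cubic estimate on $\varphi$ near $0$ — and (b) consolidating the several $O(h\gamma/\sigma^4 n)$ contributions (Taylor error, Gaussian tail, Poisson-summation remainder) under one clean constant. I would handle (a) by first establishing the auxiliary bound $|\varphi(t)|^2 \le 1 - \sigma^2 t^2 + \tfrac{1}{3}\gamma|t|^3 \le \exp(-\sigma^2 t^2/2)$ for $|t|\le \sigma^2/4\gamma$ (a variant of a standard lemma, e.g.\ in the proof of Berry–Esseen), which is robust and loses little, and then feed it into the $|a^n - b^n|$ estimate. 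No step uses anything beyond elementary Fourier analysis and Taylor's theorem with remainder, so the plan is simply to assemble these pieces while watching the constants.
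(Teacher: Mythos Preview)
Your plan is essentially the paper's own: Fourier inversion on the torus $[-\pi/h,\pi/h]$, cutoff at $\delta=\sigma^2/(4\gamma)$, central piece controlled by a Gaussian approximation to $\varphi^n$, Gaussian tail beyond $\delta$ bounded by a Chernoff-type estimate, and the remaining piece left as the integral in~\eqref{eq:thm:llt}. Two small remarks. First, no Poisson summation is needed: the identity $\frac{h}{\sqrt{2\pi n\sigma^2}}e^{-w^2/2n\sigma^2}=\frac{h}{2\pi}\int_{\bR}e^{-n\sigma^2 t^2/2-itw}\,dt$ is exact (continuous Fourier inversion of a Gaussian), so there is no ``Poisson remainder'' term to absorb. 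Second, for the central estimate the paper avoids your $|a^n-b^n|$ route and instead invokes a ready-made Esseen inequality from Petrov, which after rescaling reads $|\varphi(t)^n-e^{-n\sigma^2 t^2/2}|\le 16\gamma n|t|^3 e^{-n\sigma^2 t^2/3}$ for $|t|\le\sigma^2/(4\gamma)$; integrating this directly gives $144\gamma/(\sigma^4 n)$, and combining with the Gaussian tail yields $(\tfrac{72}{\pi}+\tfrac{16}{\sqrt{\pi}})\frac{h\gamma}{\sigma^4 n}\le \frac{32h\gamma}{\sigma^4 n}$. Your hand-rolled derivation would work too, but the Petrov lemma delivers the constant $32$ without the bookkeeping you flag as the main obstacle.
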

To prove this theorem, we will require the following Fourier inverse theorems which can be found in the book of Durett~\cite{Durrett}.
\begin{theorem}[Continuous Fourier Inverse Theorem]\label{thm:ctns_inv}
 Suppose that $X$ is a random variable with characteristic function $\varphi_X(t)$.
 Suppose further that $\varphi_X(t)$ is integrable, then $X$ is a continuous random variable with density function $f(y)$ defined by
 $$
 f(y) = \frac{1}{2\pi} \int_{\bR} e^{-ity}\varphi_X(t) dt.
 $$
\end{theorem}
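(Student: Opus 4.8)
The plan is to reproduce the standard proof of the classical Fourier inversion theorem (cf.\ \cite{Durrett}). First I would recall the hypothesis-free version of the inversion formula: for any $a<b$ that are continuity points of the distribution function $F$ of $X$,
\[
F(b)-F(a)=\lim_{T\to\infty}\frac{1}{2\pi}\int_{-T}^{T}\frac{e^{-ita}-e^{-itb}}{it}\,\varphi_X(t)\,dt ,
\]
which one obtains by writing $\varphi_X(t)=\bE[e^{itX}]$, exchanging $\bE$ with the integral over the compact interval $[-T,T]$ (harmless by Fubini), evaluating the resulting oscillatory integral in terms of the Dirichlet integral $\int_0^{\infty}\frac{\sin u}{u}\,du=\frac{\pi}{2}$, and letting $T\to\infty$ by dominated convergence. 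I would simply cite this.

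Next I would bring in integrability. Since $\bigl|\tfrac{e^{-ita}-e^{-itb}}{it}\bigr|=\bigl|\int_a^b e^{-ity}\,dy\bigr|\le b-a$, the integrand above is dominated by $(b-a)|\varphi_X|\in L^1(\bR)$, so the symmetric limit in the previous display is just the absolutely convergent integral over $\bR$. Then Fubini's theorem — legitimate because $\int_{\bR}\int_a^b|e^{-ity}\varphi_X(t)|\,dy\,dt=(b-a)\|\varphi_X\|_1<\infty$ — gives
\[
F(b)-F(a)=\frac{1}{2\pi}\int_{\bR}\varphi_X(t)\int_a^b e^{-ity}\,dy\,dt=\int_a^b\Bigl(\frac{1}{2\pi}\int_{\bR}e^{-ity}\varphi_X(t)\,dt\Bigr)dy=\int_a^b f(y)\,dy
\]
for all continuity points $a<b$, with $f$ as in the statement.

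Finally I would identify $f$ as a density. By dominated convergence $f$ is bounded ($\|f\|_\infty\le\frac{1}{2\pi}\|\varphi_X\|_1$) and continuous, and it is real-valued since $\varphi_X(-t)=\overline{\varphi_X(t)}$. Hence $F$ agrees, up to an additive constant, with the continuous function $y\mapsto\int_0^y f$ on the dense set of its continuity points; together with monotonicity and right-continuity of $F$ this forces $F$ to be continuous everywhere, so $F(b)-F(a)=\int_a^b f$ for \emph{all} $a<b$. Monotonicity of $F$ then gives $f\ge0$ almost everywhere, and sending $a\to-\infty$, $b\to+\infty$ gives $\int_{\bR}f=1$; thus $f$ is a genuine probability density whose distribution function is $F$, i.e.\ $X$ is absolutely continuous with density $f$.

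There is no deep obstacle here, as this is a classical result; the only two points requiring care are the evaluation of the oscillatory integral in the first step (the Dirichlet-integral limit, which is why the formula is first obtained only at continuity points), and the passage from ``continuity points'' to ``all points'' in the last step — which is precisely where the integrability hypothesis is used, since it makes $f$ bounded and hence $F$ atomless.
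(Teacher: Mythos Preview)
Your proof is correct and is essentially the standard argument found in Durrett's textbook. Note that the paper does not actually give its own proof of this theorem; it simply states it and cites \cite{Durrett} as a reference, so there is no paper-specific approach to compare against.
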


\begin{theorem}[Discrete Fourier Inverse Theorem]\label{thm:disc_inv}
Let $X$ be a random variable with characteristic function $\varphi_X(t)$. Suppose that there exists $h>0$ such that $\bP(X\in h\bZ) = 1$. Then for any $a \in h\bZ$,
$$
\bP(X=a) = \frac{h}{2\pi} \int_{-\frac{\pi}{h}}^\frac{\pi}{h} e^{-ita}\varphi_X(t) dt.
$$
\end{theorem}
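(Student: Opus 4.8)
The plan is to reduce the lattice $h\bZ$ to the integer lattice $\bZ$ and then exploit orthogonality of the characters $s\mapsto e^{i\ell s}$ on a full period $[-\pi,\pi]$. First I would write $X = hY$, where $Y$ takes values in $\bZ$; this is legitimate precisely because $\bP(X\in h\bZ)=1$. Expanding the characteristic function as an (absolutely convergent) Fourier series,
\[
\varphi_X(t) = \bE\!\left[e^{itX}\right] = \sum_{k\in\bZ}\bP(Y=k)\,e^{ithk},
\]
with convergence uniform in $t$, since the $k$-th summand is bounded in modulus by the summable quantity $\bP(Y=k)$.

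Next, writing $a = hm$ with $m\in\bZ$ and substituting $s = th$ in the integral on the right-hand side of the claimed identity, I would obtain
\[
\frac{h}{2\pi}\int_{-\pi/h}^{\pi/h} e^{-ita}\varphi_X(t)\,dt
= \frac{1}{2\pi}\int_{-\pi}^{\pi} e^{-ism}\left(\sum_{k\in\bZ}\bP(Y=k)\,e^{isk}\right)ds.
\]
Since $\sum_{k\in\bZ}\bP(Y=k)\,\bigl|e^{is(k-m)}\bigr| = \sum_{k\in\bZ}\bP(Y=k) = 1$ for every $s$, Fubini's theorem (equivalently, dominated convergence applied to the partial sums over the compact interval $[-\pi,\pi]$) permits exchanging the sum with the integral, yielding $\tfrac{1}{2\pi}\sum_{k\in\bZ}\bP(Y=k)\int_{-\pi}^{\pi}e^{is(k-m)}\,ds$.

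Finally I would apply the orthogonality relation $\int_{-\pi}^{\pi}e^{i\ell s}\,ds = 2\pi\,\ind{\ell=0}$, valid for every integer $\ell$. Only the term $k=m$ survives, and the entire expression collapses to $\bP(Y=m) = \bP(X=hm) = \bP(X=a)$, which is precisely the asserted formula.

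There is essentially no obstacle here; the one point deserving a word of justification is the interchange of the infinite sum and the integral, which is immediate from the absolute summability of $\{\bP(Y=k)\}_{k\in\bZ}$. Should one prefer to avoid appealing to Fubini, one can instead truncate to $|k|\le N$, apply orthogonality to the resulting finite sum, and then let $N\to\infty$, bounding the error by $\bP(|Y|>N)\to 0$.
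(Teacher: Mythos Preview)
Your argument is correct: the reduction to $Y=X/h$, the expansion of $\varphi_X$ as an absolutely convergent Fourier series, the change of variables $s=th$, the interchange of sum and integral via dominated convergence (or the truncation argument you sketch), and the orthogonality relation $\int_{-\pi}^{\pi} e^{i\ell s}\,ds = 2\pi\,\ind{\ell=0}$ are all valid and assembled correctly.

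There is nothing to compare against, however: the paper does not prove this statement. It is quoted as a standard result from Durrett's textbook and used as a black box in the proof of Theorem~2.1. Your write-up is exactly the standard proof one would find in such a reference, so it is entirely appropriate here.
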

Finally, as we are interested in local limit theorems it will be useful to note that the characteristic function of the standard normal distribution is given by $N(t) = e^{-\frac{t^2}{2}}$.
\begin{proof}[Proof of Theorem~\ref{thm:lllt}]
Let $\varphi(t)$ be the characteristic function of $X_1$ and $\psi_n(t)$ the characteristic function of $S_n$.
By basic properties of characteristic functions, it is easy to see that $\psi_n(t) = \varphi(t)^n$.
 By Theorems~\ref{thm:ctns_inv} and~\ref{thm:disc_inv} we may deduce that
 \begin{equation*}
  \bP(S_n = w) - \frac{h}{\sqrt{2\pi n \sigma^2}} \exp \left(-\frac{w^2}{2n\sigma^2}\right) = \frac{h}{2\pi} \int_{-\frac{\pi}{h}}^\frac{\pi}{h} e^{-itw}(\psi_n(t)-N(t\sigma\sqrt{n}))dt - \frac{h}{\pi}\int_\frac{\pi}{h}^\infty e^{-itw}N(t\sigma\sqrt{n})dt.
 \end{equation*}
Therefore, by applying various forms of the triangle law we obtain the bound
\begin{equation}\label{eq:llt_pbound_1}
 \left| \bP(S_n = w) - \frac{h}{\sqrt{2\pi n \sigma^2}}
 \exp \left(-\frac{w^2}{2n\sigma^2}\right) \right| \leq
 \frac{h}{2\pi} \int_{-\frac{\pi}{h}}^\frac{\pi}{h} |\psi_n(t)-
 N(t\sigma\sqrt{n})|dt + \frac{h}{\pi} \int_\frac{\pi}{h}^\infty N(t\sigma\sqrt{n})dt.
\end{equation}
To bound the first integral in~\eqref{eq:llt_pbound_1} we split it into three parts.
For $\eps>0$ (which we shall pick later) we have
\begin{equation}\label{eq:llt_3integrals}
 \int_{-\frac{\pi}{h}}^\frac{\pi}{h} |\psi_n(t)-
 N(t\sigma\sqrt{n})|dt \leq \int_{-\eps}^\eps |\psi_n(t)-
 N(t\sigma\sqrt{n})|dt + 2\int_{\eps}^\frac{\pi}{h} N(t\sigma\sqrt{n})dt + 2\int_{\eps}^\frac{\pi}{h} |\varphi(t)|^n dt
\end{equation}
The idea of this bound being that both $\psi_n(t)$ and $N(t\sigma\sqrt{n})$ only contribute a non-trivial amount to the left hand side of~\eqref{eq:llt_3integrals} for $t$ very close to $0$.
In bounding the first integral of~\eqref{eq:llt_3integrals} we will use the following lemma~\cite[Page 109, Lemma 1]{Petrov}.
\begin{lemma}
 Let $X_1, \ldots,  X_n$ be independent random variables with $\bE(X_i)=0$, $\sigma^2_i = \Vv{X_i}$ and $\gamma_i = \bE|X_i|^3$.
 Define
 \begin{align*}
  B_n := \sum_{i=1}^n \sigma^2_i && L_n := \frac{\sum_{i=1}^n \gamma_i}{B_n^{3/2}} && T_n := \frac{\sum_{i=1}^n X_i}{B_n^{1/2}}
 \end{align*}
 Let $f_n(t)$ be the characteristic function of $T_n$. Then,
 \begin{equation}\label{eq:esseen_ineq}
  |f_n(t) - e^{-\frac{t^2}{2}}| \leq 16L_n|t|^3 e^{-\frac{t^2}{3}}\; \text{ for }\; |t| \leq \frac{1}{4L_n}.
 \end{equation}
\end{lemma}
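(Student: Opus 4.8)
The plan is to factor the characteristic function as $f_n(t)=\prod_{k=1}^n v_k(t)$ with $v_k(t)=\bE\exp(itX_k/\sqrt{B_n})$, and to compare it, factor by factor, with the matching Gaussian factorisation $e^{-t^2/2}=\prod_k e^{-x_k}$, where $x_k:=\frac{\sigma_k^2 t^2}{2B_n}$ (so $\sum_k x_k=\tfrac{t^2}{2}$). The only real subtlety — and the reason this is not a one-line computation — is that one cannot take logarithms of all the $v_k(t)$ uniformly over $|t|\le\frac1{4L_n}$: a single summand carrying a large share of the total variance forces its characteristic function to oscillate away from $1$ already at moderate $|t|$. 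The remedy is a dichotomy governed by $\max_k x_k$ versus $\tfrac12$. Throughout I would use the elementary bounds $|e^{iu}-1-iu|\le\tfrac{u^2}{2}$ and $|e^{iu}-1-iu+\tfrac{u^2}{2}|\le\tfrac{|u|^3}{6}$, which give $|v_k(t)-1|\le x_k$ and $v_k(t)=1-x_k+\delta_k$ with $|\delta_k|\le\frac{\gamma_k|t|^3}{6B_n^{3/2}}$, hence $\sum_k|\delta_k|\le\tfrac16 L_n|t|^3$; together with $|\log(1+z)-z|\le|z|^2$ for $|z|\le\tfrac12$, $1-y\le e^{-y}$, $e^{a-b}(1-a+b)\le1$ for $a,b\ge0$, and $|e^\rho-1|\le|\rho|e^{|\rho|}$ for complex $\rho$, each of which I would verify in a line. (The degenerate cases $t=0$ and $L_n\in\{0,\infty\}$ are trivial, so I assume $0<L_n<\infty$ and $t\ne0$.)

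In the regime $\max_k x_k<\tfrac12$ every factor satisfies $|v_k(t)-1|<\tfrac12$, so $v_k(t)\ne0$ and $f_n(t)=\exp\bigl(\sum_k\log v_k(t)\bigr)$. Writing $z_k=v_k(t)-1=-x_k+\delta_k$ and $\log(1+z_k)+x_k=(\log(1+z_k)-z_k)+\delta_k$, one gets $|\log v_k(t)+x_k|\le|z_k|^2+|\delta_k|\le x_k^2+|\delta_k|$. The quantitative heart of the argument is that in this regime $\sigma_k^2<B_n/t^2$, so $\sigma_k^4=\sigma_k\,\sigma_k^3\le\sigma_k\gamma_k\le\frac{\sqrt{B_n}}{|t|}\gamma_k$, whence $\sum_k x_k^2=\frac{t^4}{4B_n^2}\sum_k\sigma_k^4\le\tfrac14 L_n|t|^3$. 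Therefore $\rho:=\sum_k\log v_k(t)+\tfrac{t^2}{2}$ obeys $|\rho|\le\tfrac14L_n|t|^3+\tfrac16L_n|t|^3=\tfrac{5}{12}L_n|t|^3\le\tfrac5{48}t^2$ (using $L_n|t|\le\tfrac14$), and so $|f_n(t)-e^{-t^2/2}|=e^{-t^2/2}|e^\rho-1|\le e^{-t^2/2}|\rho|e^{|\rho|}\le\tfrac5{12}L_n|t|^3e^{-19t^2/48}\le16L_n|t|^3e^{-t^2/3}$.

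In the complementary regime $\max_k x_k\ge\tfrac12$, say attained at index $j$, one has $\sigma_j\ge\sqrt{B_n}/|t|$, hence $\gamma_j\ge\sigma_j^3\ge B_n^{3/2}/|t|^3$ and therefore $L_n|t|^3\ge1$; it is then enough to show $|f_n(t)|\le e^{-t^2/3}$, since $|f_n(t)-e^{-t^2/2}|\le|f_n(t)|+e^{-t^2/2}\le2e^{-t^2/3}\le16L_n|t|^3e^{-t^2/3}$. Partition the indices into $G=\{k:x_k<\tfrac12\}$ and $\cB=\{k:x_k\ge\tfrac12\}$. For $k\in\cB$ one has $\sigma_k^2\ge B_n/t^2$, so $\sigma_k^2\le\gamma_k/\sigma_k\le\gamma_k|t|/\sqrt{B_n}$ and hence $\sum_{k\in\cB}x_k=\frac{t^2}{2B_n}\sum_{k\in\cB}\sigma_k^2\le\tfrac12|t|^3L_n\le\tfrac{t^2}{8}$, so $\sum_{k\in G}x_k\ge\tfrac{3t^2}{8}$. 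For $k\in G$, $|v_k(t)|\le(1-x_k)+|\delta_k|\le e^{-x_k+|\delta_k|}$, while $|v_k(t)|\le1$ for $k\in\cB$; multiplying, $|f_n(t)|\le\exp\bigl(-\sum_{k\in G}x_k+\sum_k|\delta_k|\bigr)\le\exp\bigl(-\tfrac{3t^2}{8}+\tfrac{t^2}{24}\bigr)=e^{-t^2/3}$, as wanted.

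The main obstacle I anticipate is conceptual, not computational: recognising that a uniform logarithmic expansion is unavailable over the whole range $|t|\le\frac1{4L_n}$, and that the correct threshold must be phrased in terms of $\max_k x_k$ (equivalently, of the largest variance share $\sigma_k^2/B_n$). Once that is in place, both branches run on the single mechanism $\sigma_k^4\le\sigma_k\gamma_k$, which is exactly what upgrades the crude error of order $|t|^3$ to the sharp $L_n|t|^3$ and, via $\sum_k x_k=\tfrac{t^2}{2}$, simultaneously manufactures the Gaussian decay factor $e^{-t^2/3}$. The only remaining point needing a touch of care is the logarithm step in the first regime, where one checks that $\sum_k\log v_k(t)$ is a genuine logarithm of $f_n(t)$ — immediate from $f_n(t)=\prod_k e^{\log v_k(t)}$ — and the bookkeeping of the numerical constant, which is comfortably below the stated $16$.
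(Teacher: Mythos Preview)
The paper does not prove this lemma; it simply quotes it from Petrov's book (\cite[Page 109, Lemma 1]{Petrov}) and applies it. So there is no in-paper proof to compare against.

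Your argument is correct. The approach is essentially the classical one (and indeed the one in Petrov): factor $f_n=\prod_k v_k$, expand each $v_k$ to third order getting $v_k=1-x_k+\delta_k$ with $x_k=\sigma_k^2 t^2/(2B_n)$ and $|\delta_k|\le \gamma_k|t|^3/(6B_n^{3/2})$, and convert the Lyapunov condition into the key estimate $\sigma_k^4\le \sigma_k\gamma_k$ via $\sigma_k^3\le\gamma_k$. Your dichotomy on $\max_k x_k$ versus $\tfrac12$ is the standard way to handle the failure of a uniform logarithmic expansion; the bookkeeping in both branches checks out, including the use of $|z_k|\le x_k$ (from the second-order bound) to justify $|\log(1+z_k)-z_k|\le x_k^2$, the estimate $\sum_k x_k^2\le\tfrac14 L_n|t|^3$ in the first branch, and in the second branch the reduction to $|f_n(t)|\le e^{-t^2/3}$ together with $\sum_{k\in\cB}x_k\le\tfrac12 L_n|t|^3\le t^2/8$. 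The numerical constants are comfortably inside the stated $16$.

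One purely stylistic remark: you introduce the inequality $e^{a-b}(1-a+b)\le1$ in the preamble but never use it; it can be dropped. Everything else is tight and self-contained.
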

Clearly this is applicable in our setting, however we need to rescale first which allows us to deduce
 $$|\psi_n(t)- N(t\sigma\sqrt{n})| \leq 16\gamma n |t|^3 e^{-\frac{t^2\sigma^2 n}{3}}\; \text{ for }\; |t| \leq \frac{\sigma^2}{4\gamma}.
 $$
So, for any $\eps\leq \sigma^2/(4\gamma)$ we have
\begin{align}
 \int_{-\eps}^\eps |\psi_n(t)- N(t\sigma\sqrt{n})|dt &\leq 16\gamma n \int_{-\eps}^\eps |t|^3 e^{-\frac{t^2\sigma^2 n}{3}} dt\nonumber \\
 & \leq 16\gamma n \int_{-\infty}^\infty |t|^3 e^{-\frac{t^2\sigma^2 n}{3}} dt\nonumber \\
 & =  \frac{16\gamma}{\sigma^4 n} \int_{-\infty}^\infty |t|^3 e^{-\frac{t^2}{3}} dt = \frac{144\gamma}{\sigma^4 n}. \label{eq:bound_I1}
\end{align}
The next step is to bound the second term of~\eqref{eq:llt_3integrals}.
Note that we can combine this with bounding the second term of~\eqref{eq:llt_pbound_1}, so we require to give an upper bound on
\begin{equation}\label{eq:I2_bound}
 \int_\eps^\infty N(t\sigma\sqrt{n}) dt = \frac{1}{\sigma\sqrt{n}} \int_{\eps \sigma \sqrt{n}}^\infty e^{-\frac{t^2}{2}} dt= \frac{\sqrt{2\pi}}{\sigma \sqrt{n}} \bP(\cN(0,1)>\eps \sigma \sqrt{n}).
\end{equation}
We use the Chernoff's bound for the standard normal distribution, $\bP(\cN(0,1)>x) \leq e^{-\frac{x^2}{2}}$ and the simple inequality $e^{-x} \leq x^{-1/2}$ for $x>0$, to obtain
\begin{equation}\label{eq:I2_bound_final}
 \int_\eps^\infty N(t\sigma\sqrt{n}) dt = \frac{\sqrt{2\pi}}{\sigma \sqrt{n}} \bP(\cN(0,1)>\eps \sigma \sqrt{n}) \leq \frac{2\sqrt{\pi}}{\eps \sigma^2 n}
\end{equation}
Choosing $\eps = \sigma^2/(4\gamma)$ and combining~\eqref{eq:llt_pbound_1},~\eqref{eq:llt_3integrals},~\eqref{eq:bound_I1} and \eqref{eq:I2_bound_final}, we find that for any $w \in \cL(nv_0,h)$,
\begin{align}
\left| \bP(S_n = w) - \frac{h}{\sqrt{2\pi n \sigma^2}} \exp \left(-\frac{w^2}{2n\sigma^2}\right) \right| &\leq 
\left( \frac{72}{\pi} + \frac{16}{\sqrt{\pi}}\right) \frac{h\gamma}{\sigma^4 n} + \frac{h}{\pi} \int_\frac{\sigma^2}{4\gamma}^\frac{\pi}{h} |\varphi(t)|^n dt \nonumber \\
& \leq \frac{32h\gamma}{\sigma^4 n} + \frac{h}{\pi} \int_\frac{\sigma^2}{4\gamma}^\frac{\pi}{h} |\varphi(t)|^n dt.\label{eq:SndiffNorm}
\end{align}
Concluding the proof of the theorem.
\end{proof}

For the remainder of this section we will focus on bounding the integral term in the RHS of \eqref{eq:thm:llt}.
To this end we introduce the parameter $H_D(X)$, which generalises a similar parameter introduced by Mukhin~\cite{Mukhin2, Mukhin1}.
For a real-valued random variable $X$, we define $X^* = X-X'$ to be the \emph{symmetrisation} of $X$, where $X'$ is an independent copy of $X$.
Furthermore, for $\alpha \in\bR$ define $\langle \alpha \rangle$ to be the distance from $\alpha$ to the nearest integer.
Then for a random variable $X$ and $d\in \mathbb{R}$ we define the following parameter
\begin{align*}
 H(X,d) &:= \bE\langle X^* d \rangle^2.
\end{align*}
The parameter $H(X,d)$ measures in a certain sense how close is $X^*$ to be a random variable supported on a lattice with step $1/|d|$.

The following lemma from~\cite{Mukhin2} will be useful.
\begin{lemma}\label{lem:bound_cf_H}
 If $\varphi(t)$ is the characteristic function of the random variable $X$ then
 \begin{equation}
  4H\left(X,\frac{t}{2\pi}\right) \leq 1-|\varphi(t)| \leq 2\pi^2 H\left(X,\frac{t}{2\pi}\right)
 \end{equation}
\end{lemma}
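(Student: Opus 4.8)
The plan is to pass through the quantity $1-|\varphi(t)|^2$, which is more naturally expressed in terms of the symmetrisation $X^*$, and then to compare it with $1-|\varphi(t)|$ using the trivial bound $|\varphi(t)|\in[0,1]$.

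First, since $X$ and $X'$ are independent and identically distributed, $|\varphi(t)|^2=\varphi(t)\overline{\varphi(t)}=\bE\left[e^{it(X-X')}\right]=\bE\left[e^{itX^*}\right]$; as $X^*$ is symmetric, its characteristic function is real, so this equals $\bE[\cos(tX^*)]$. Hence $1-|\varphi(t)|^2=\bE[1-\cos(tX^*)]=2\,\bE\!\left[\sin^2(tX^*/2)\right]$. Writing $d=t/(2\pi)$, the argument becomes $tX^*/2=\pi dX^*$, and since $\theta\mapsto\sin^2\theta$ has period $\pi$ one gets $\sin^2(\pi dX^*)=\sin^2(\pi\langle dX^*\rangle)$, where $\langle dX^*\rangle\in[0,1/2]$.

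Next I would invoke the elementary inequality $4x^2\le\sin^2(\pi x)\le\pi^2x^2$ valid for $x\in[0,1/2]$: the upper bound comes from $\sin\theta\le\theta$ for $\theta\ge0$, and the lower bound holds because $x\mapsto\sin(\pi x)$ is concave on $[0,1/2]$ and therefore lies above the chord joining $(0,0)$ to $(1/2,1)$, i.e.\ $\sin(\pi x)\ge2x$ there. Applying this with $x=\langle dX^*\rangle$ and taking expectations yields
\[
8\,H(X,d)\ \le\ 1-|\varphi(t)|^2\ \le\ 2\pi^2\,H(X,d).
\]
Finally, from $0\le|\varphi(t)|\le1$ we have the sandwich $1-|\varphi(t)|\le 1-|\varphi(t)|^2=(1-|\varphi(t)|)(1+|\varphi(t)|)\le 2\bigl(1-|\varphi(t)|\bigr)$; combining the left half with the upper bound above gives $1-|\varphi(t)|\le 2\pi^2 H(X,d)$, and combining the right half with the lower bound gives $4H(X,d)\le 1-|\varphi(t)|$. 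Recalling $d=t/(2\pi)$ this is exactly the claimed statement.

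There is no serious obstacle here; the only point requiring care is the reduction $\sin^2(\pi dX^*)=\sin^2(\pi\langle dX^*\rangle)$ together with the two-sided comparison of $\sin^2(\pi x)$ with $x^2$ on $[0,1/2]$, where one must track the constants $4$ and $\pi^2$ and the factor $2$ lost when moving between $1-|\varphi(t)|^2$ and $1-|\varphi(t)|$, so that they line up precisely with the constants in the lemma.
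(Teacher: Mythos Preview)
Your proof is correct and essentially the same as the paper's. Both arguments pass through $|\varphi(t)|^2=\bE[\cos(tX^*)]$, reduce via periodicity to $\langle dX^*\rangle\in[0,1/2]$, use the two-sided comparison of $1-\cos$ (equivalently $\sin^2$) with a quadratic to obtain $8H\le 1-|\varphi(t)|^2\le 2\pi^2 H$, and then factor $1-|\varphi(t)|^2=(1-|\varphi(t)|)(1+|\varphi(t)|)$ to pass to $1-|\varphi(t)|$; the only cosmetic difference is that the paper writes the cosine bounds $1-x^2/2\le\cos x\le 1-2x^2/\pi^2$ directly rather than via the half-angle $\sin^2$ form.
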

We provide a full proof of the statement, for the sake of completeness.
\begin{proof}
 We look at the characteristic function of $X^*$, $\varphi^*(t)$.
 Note that $X^*$ is by definition symmetric around the origin and hence so is $\varphi^*(t)$.
 Writing $\cD(X^*)$ for the domain of $X^*$ which is discrete, we have
 \begin{equation}
  \varphi^*(t) = \frac{\varphi^*(t) + \varphi^*(-t)}{2} = \sum_{x \in \cD(X^*)} \frac{e^{itx} + e^{-itx}}{2} \bP(X^* = x) = \sum_{x \in \cD(X^*)} \cos(tx) \bP(X^* = x). \label{eq:chf_cos}
 \end{equation}
As $\cos(x)$ is symmetric around $\pi$ and periodic with period $2\pi$, we have the identity
$$
\cos(x) = \cos\left(2\pi\left\langle\frac{x}{2\pi}\right\rangle \right).
$$
We can use this identity to rewrite~\eqref{eq:chf_cos} as
\begin{equation}
 \varphi^*(t) = \sum_{x \in \cD(X^*)} \cos\left(2\pi\left\langle\frac{tx}{2\pi}\right\rangle \right) \bP(X^* = x). \label{eq:chf_identity}
\end{equation}
Consider the following bounds on $\cos(x)$ valid for $x\in[0,\pi]$,
$$
1 - \frac{x^2}{2} \leq \cos(x) \leq 1 - \frac{2 x^2}{\pi^2}.
$$
We can use this in combination with~\eqref{eq:chf_identity} to deduce that
\begin{equation}
 1 - 2\pi^2 \bE\left\langle \frac{X^* t}{2\pi} \right\rangle^2 \leq \varphi^*(t) \leq 1 - 8 \bE\left\langle \frac{X^* t}{2\pi} \right\rangle^2
\end{equation}
Finally, by definition of $X^*$, note that $\varphi^*(t) = \varphi(t)\varphi(-t) = |\varphi(t)|^2$.
As $|\varphi(t)| \in [0,1]$ we may deduce that
\begin{equation}
 1 - 2\pi^2 H\left( X, \frac{t}{2\pi} \right) \leq |\varphi(t)| \leq 1 - 4 H\left( X, \frac{t}{2\pi} \right)
\end{equation}
which may easily be rearranged to give the statement of the lemma.
\end{proof}
For $D\in \mathbb{N}$ and a random variable $X$ we define
\begin{align}\label{eq:H_D}
H_D(X) := \inf_{\frac{1}{4D} \leq d \leq \frac{1}{2D}} H(X,d)
\end{align}
The following lemma of Mukhin~\cite{Mukhin1} bounds $H(X,d)$ in terms of this new parameter, 
\begin{lemma}\label{lem:bound_HfromHD}
 For any random variable $X$, $d \in \bR$ and
 $D\in \mathbb{N}$ with $2D |d|\leq 1$ we have
 $$
 H(X, d) \geq 4D^2 |d|^2 H_D(X).
 $$ 
\end{lemma}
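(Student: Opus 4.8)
The plan is to choose an integer multiple of $d$ that lands inside the window $[\tfrac1{4D},\tfrac1{2D}]$ over which the infimum defining $H_D(X)$ is taken, and then to compare $H(X,d)$ with the value of the same functional at that dilated argument by means of an elementary scaling inequality. Since $H(X,\cdot)$ is built from $\bE\langle X^* \cdot\rangle^2$ and $\langle\cdot\rangle$ is insensitive to sign, we may assume $d>0$ throughout (the case $d=0$ is trivial, both sides vanishing), and set $k:=\lfloor \tfrac1{2Dd}\rfloor$.

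First I would record the sublemma that for every real $\alpha$ and every positive integer $k$ one has $\langle k\alpha\rangle=\min_{j\in\bZ}|k\alpha-j|\le |k\alpha-km|=k\langle\alpha\rangle$, where $m\in\bZ$ is a nearest integer to $\alpha$. Applying this pointwise with $\alpha=dX^*$, squaring and taking expectations gives the scaling bound $H(X,kd)\le k^2 H(X,d)$ for every positive integer $k$. Next I would check that $d':=kd$ lies in $[\tfrac1{4D},\tfrac1{2D}]$: the hypothesis $2Dd\le 1$ forces $k\ge 1$, and by construction $kd\le\tfrac1{2D}$; for the lower bound, if $k=1$ then $k=1=\lfloor\tfrac1{2Dd}\rfloor$ means $\tfrac1{2Dd}<2$, i.e. $d>\tfrac1{4D}$, while if $k\ge2$ then $\tfrac1{2Dd}\ge2$ and $\lfloor x\rfloor\ge x-1\ge x/2$ applied to $x=\tfrac1{2Dd}$ yields $k\ge\tfrac1{4Dd}$, i.e. $kd\ge\tfrac1{4D}$.

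Finally, combining these facts, $H_D(X)\le H(X,d')=H(X,kd)\le k^2H(X,d)$, and since $k\le\tfrac1{2Dd}$ we have $k^{-2}\ge 4D^2d^2$, which rearranges to $H(X,d)\ge 4D^2|d|^2H_D(X)$. The only mildly delicate point — and the one I expect to need the most care in the write-up — is the verification that the integer multiple $kd$ genuinely falls within the prescribed window rather than overshooting or undershooting it; this is exactly where the constant $4$ in the definition of $H_D$ (rather than a larger one) is used, and it is what forces the short case split on $k=1$ versus $k\ge2$. Everything else is a one-line estimate.
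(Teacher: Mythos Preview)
Your proof is correct. The paper does not actually prove this lemma: it is stated as a result of Mukhin~\cite{Mukhin1} and quoted without argument. Your self-contained proof via the dilation $d'=kd$ with $k=\lfloor 1/(2Dd)\rfloor$, the pointwise scaling inequality $\langle k\alpha\rangle\le k\langle\alpha\rangle$, and the short case split $k=1$ versus $k\ge 2$ to place $kd$ in $[\tfrac1{4D},\tfrac1{2D}]$ is clean and complete, and it makes explicit exactly why the factor $4$ (rather than something larger) appears both in the definition of $H_D$ and in the conclusion.
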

We may now apply Lemma~\ref{lem:bound_cf_H} to  give an explicit upper bound on the integral term in~\eqref{eq:thm:llt} as follows. Recall that $X_1$ is a lattice random variable with step $h$. By Lemma~\ref{lem:bound_cf_H} and using $\ln(1/x)\geq 1-x$ for $x>0$,
\begin{equation}\label{eq:int_phi_n_bound}
 \int_\frac{\sigma^2}{4\gamma}^\frac{\pi}{h} |\varphi(t)|^n dt \leq \int_\frac{\sigma^2}{4\gamma}^\frac{\pi}{h}  e^{-n(1-|\varphi(t)|)} dt \leq \int_\frac{\sigma^2}{4\gamma}^\frac{\pi}{h}  e^{-4nH(X,\frac{t}{2\pi})} dt.
\end{equation}
Now, note that the upper limit of the integral in~\eqref{eq:int_phi_n_bound} is $\pi/h$.
So, as $(\pi/h)/(2\pi) = 1/(2h)$ we may apply Lemma~\ref{lem:bound_HfromHD} with $D=h$ and $d=t/2\pi \leq 1/2h$ to deduce that
\begin{align}\label{eq:int_phi_n_bound_2}
  \int_\frac{\sigma^2}{4\gamma}^\frac{\pi}{h} |\varphi(t)|^n dt & \leq \int_\frac{\sigma^2}{4\gamma}^\frac{\pi}{h}  e^{-\frac{4nh^2H_h(X)t^2}{\pi^2}} dt \leq \int_\frac{\sigma^2}{4\gamma}^\infty  e^{-\frac{4nh^2H_h(X)t^2}{\pi^2}}dt \nonumber \\ 
  & = \frac{\pi^{3/2}}{2h(nH_h(X))^{1/2}} \bP\left(\cN(0,1)>\frac{\sigma^2h(nH_h(X))^{1/2}}{\sqrt{2}\pi\gamma}\right) \nonumber \\
  & \leq \frac{\pi^{3/2}}{2h(nH_h(X))^{1/2}} e^{-\frac{\sigma^4h^2nH_h(X)}{4\pi^2\gamma^2}},
\end{align}
where the final inequality follows by the Chernoff's bound.
This allows us to deduce, once again using the inequality $e^{-x}<x^{-1/2}$, that this integral is bounded above as
$$
\int_\frac{\sigma^2}{4\gamma}^\frac{\pi}{h} |\varphi(t)|^n dt \leq \frac{\pi^{5/2} \gamma}{h^2\sigma^2 n H_h(X)}.
$$
This allows us to state the following corollary to Theorem~\ref{thm:lllt}.
\begin{corollary}\label{lllt_coro}
 Let $X_1, X_2, \ldots, X_n$ be independent and identically distributed random variables taking values on $\cL(v_0, h)$.
 Define $S_n = \sum_{i=1}^n X_i$.
 Suppose that $\mu = \bE(X_1)=0$, $\sigma^2 = \Vv{X_1}$ and $\gamma = \bE|X_1|^3$.
 Then, 
\begin{align}\label{eq:cor_LLT}
 \sup_{ w \in\cL(nv_0, h)} \left| \bP(S_n = w) - \frac{h}{\sqrt{2\pi n \sigma^2}} \exp \left(-\frac{w^2}{2n\sigma^2}\right) \right| \leq \frac{32h\gamma}{\sigma^4 n} + \frac{6\gamma}{h\sigma^2 n H_h(X_1)}.
\end{align}
\end{corollary}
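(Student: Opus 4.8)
The plan is to simply assemble the chain of estimates developed in the discussion preceding the statement: the work has effectively been done, and the proof only needs to record it. Start from the bound of Theorem~\ref{thm:lllt},
\[
\sup_{w\in\cL(nv_0,h)}\left|\bP(S_n=w)-\frac{h}{\sqrt{2\pi n\sigma^2}}\exp\left(-\frac{w^2}{2n\sigma^2}\right)\right|\leq \frac{32h\gamma}{\sigma^4 n}+\frac{h}{\pi}\int_{\frac{\sigma^2}{4\gamma}}^{\frac{\pi}{h}}|\varphi(t)|^n\,dt,
\]
so the only term left to control is the integral. Using $|\varphi(t)|^n\leq e^{-n(1-|\varphi(t)|)}$ together with the lower bound $1-|\varphi(t)|\geq 4H(X_1,t/2\pi)$ from Lemma~\ref{lem:bound_cf_H}, and then Lemma~\ref{lem:bound_HfromHD} with $D=h$ and $d=t/2\pi$ (which is legitimate on the whole range of integration precisely because the upper endpoint is $\pi/h$, so that $2D|d|\leq 1$ throughout), one gets $H(X_1,t/2\pi)\geq h^2t^2 H_h(X_1)/\pi^2$ and hence the integrand is dominated by the Gaussian $e^{-4nh^2H_h(X_1)t^2/\pi^2}$.

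Next I would extend the range of integration to $[\sigma^2/(4\gamma),\infty)$, identify the resulting integral with a scaled tail of $\cN(0,1)$, apply the Chernoff bound $\bP(\cN(0,1)>x)\leq e^{-x^2/2}$, and finish with the elementary inequality $e^{-x}<x^{-1/2}$ for $x>0$ to replace the exponential by a polynomial factor. This produces
\[
\int_{\frac{\sigma^2}{4\gamma}}^{\frac{\pi}{h}}|\varphi(t)|^n\,dt\leq \frac{\pi^{5/2}\gamma}{h^2\sigma^2 n H_h(X_1)}.
\]
Multiplying by $h/\pi$ and using $\pi^{3/2}<6$ turns the second term in the bound of Theorem~\ref{thm:lllt} into $\frac{6\gamma}{h\sigma^2 n H_h(X_1)}$, which together with the unchanged first term $\frac{32h\gamma}{\sigma^4 n}$ is exactly~\eqref{eq:cor_LLT}.

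I do not expect a genuine obstacle here; the only points requiring care are that Lemma~\ref{lem:bound_HfromHD} is applied on the full interval $[\sigma^2/(4\gamma),\pi/h]$ (ensured by the choice of $\pi/h$ as upper limit, since $(\pi/h)/(2\pi)=1/(2h)$) and that the accumulated constant stays below $6$. In the degenerate case $H_h(X_1)=0$ the right-hand side of~\eqref{eq:cor_LLT} is $+\infty$ and there is nothing to prove; otherwise every inequality above is valid and the estimate follows.
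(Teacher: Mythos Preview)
Your proposal is correct and follows exactly the same route as the paper: you invoke Theorem~\ref{thm:lllt}, bound the integral via Lemma~\ref{lem:bound_cf_H} and Lemma~\ref{lem:bound_HfromHD} with $D=h$, pass to a Gaussian tail, apply Chernoff and then $e^{-x}<x^{-1/2}$, and finish with $\pi^{3/2}<6$. The paper's derivation is essentially the text preceding the corollary, and your write-up reproduces it step for step.
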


To give an explicit upper bound on the error probability, we need to deduce that $H_h(X_1)$ is bounded from below.  For $\mathbf{x} = (x_1, x_2, \ldots, x_k) \in\bZ^k$, define
 $$
 w(\mathbf{x}):= \max_{i\neq \ell \neq j} \frac{|x_i - x_\ell|}{\gcd(|x_i - x_\ell|, |x_j-x_\ell|)}. 
 $$
Then the fact that $H_h(X_1)$ is bounded from below is implied by the following lemma,
\begin{lemma}
 Let $X$ be an integer valued random variable supported on a lattice of step $h$ and with atoms $x_1, \ldots, x_k$ not all contained in a non-trivial arithmetic progression of the lattice. Then there exists an absolute constant $C>0$ such that
 $$
 H_h(X) \geq C\cdot \frac{\min_{i\in [k]} \bP(X=x_i)}{k (w(\mathbf{x})h)^2}
 $$
\end{lemma}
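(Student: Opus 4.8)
The plan is to produce, for every $d$ with $\tfrac1{4h}\le d\le\tfrac1{2h}$, a lower bound on $H(X,d)$ of the shape required, and then take the infimum over $d$.

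\emph{Step 1 (reduction by pivoting at the heaviest atom).} Let $\ell^{\ast}\in[k]$ realise $\bP(X=x_{\ell^{\ast}})=\max_i\bP(X=x_i)$, so $\bP(X=x_{\ell^{\ast}})\ge 1/k$, and put $p:=\min_{i\in[k]}\bP(X=x_i)$. Writing $X^{\ast}=X-X'$ with $X,X'$ i.i.d.\ and keeping in the double sum only the terms with $X'=x_{\ell^{\ast}}$,
\begin{align*}
H(X,d)&=\sum_{i,j}\langle (x_i-x_j)d\rangle^{2}\,\bP(X=x_i)\bP(X=x_j)\\
&\ge\ \bP(X=x_{\ell^{\ast}})\sum_{i}\langle (x_i-x_{\ell^{\ast}})d\rangle^{2}\,\bP(X=x_i)\ \ge\ \frac{p}{k}\,\max_{i}\langle (x_i-x_{\ell^{\ast}})d\rangle^{2}.
\end{align*}
The hypothesis that $x_1,\dots,x_k$ are not all contained in a non-trivial arithmetic progression of the step-$h$ lattice says exactly that $\gcd\{(x_i-x_j)/h\}=1$, equivalently $\gcd\{(x_i-x_{\ell^{\ast}})/h:\ i\ne\ell^{\ast}\}=1$. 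So the lemma follows with an absolute constant once we show there is an absolute $c>0$ with $\max_i\langle (x_i-x_{\ell^{\ast}})d\rangle\ge c/(w(\mathbf{x})h)$ for every admissible $d$, since taking the infimum over $d$ then gives $H_h(X)\ge c^{2}p/\big(k(w(\mathbf{x})h)^{2}\big)$.

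\emph{Step 2 (a fractional-parts inequality).} Rescale: set $b_i:=(x_i-x_{\ell^{\ast}})/h\in\bZ$ and $d':=hd\in[\tfrac14,\tfrac12]$, so that $\langle (x_i-x_{\ell^{\ast}})d\rangle=\langle b_id'\rangle$ and $\gcd\{b_i\}=1$; since $w(\mathbf{x})$ is unchanged by this rescaling and $h\ge 1$, it is enough to prove $\max_i\langle b_id'\rangle\ge c/w(\mathbf{x})$. Suppose not, so $\langle b_id'\rangle<\varepsilon$ for all $i$, where $\varepsilon$ is a small multiple of $1/w(\mathbf{x})$ to be fixed, and write $b_id'=n_i+\eta_i$ with $n_i\in\bZ$, $|\eta_i|<\varepsilon$. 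The engine is a \emph{gcd-reduction}: if $a,b\in\bZ\setminus\{0\}$ satisfy $\langle ad'\rangle,\langle bd'\rangle<\varepsilon$ and $g=\gcd(a,b)$, then for a Bezout identity $g=\alpha a+\beta b$ with $|\alpha|\le|b|/g$, $|\beta|\le|a|/g$ we get $gd'=(\alpha n_a+\beta n_b)+(\alpha\eta_a+\beta\eta_b)$, whence $\langle gd'\rangle\le(|\alpha|+|\beta|)\varepsilon\le\big(\tfrac{|a|}{g}+\tfrac{|b|}{g}\big)\varepsilon$; and when $a$ and $b$ are two of the differences $b_i$ --- which by construction are exactly the differences of atoms taken from the common atom $x_{\ell^{\ast}}$ --- the amplification factor $\tfrac{|a|}{g}+\tfrac{|b|}{g}$ is at most $2w(\mathbf{x})$, directly from the definition of $w(\mathbf{x})$. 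Iterating the reduction along a chain of gcd's of the $b_i$ that ends at $\gcd\{b_i\}=1$ produces $\langle d'\rangle=\langle1\cdot d'\rangle<C_0\big(w(\mathbf{x})\big)\varepsilon$; but $d'\in[\tfrac14,\tfrac12]$ forces $\langle d'\rangle=d'\ge\tfrac14$, so choosing $\varepsilon$ a sufficiently small multiple of $1/C_0(w(\mathbf{x}))$ yields a contradiction, which proves the required inequality and hence the lemma.

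\emph{The main obstacle.} The delicate point is the bookkeeping in Step 2. Reducing the gcd greedily amplifies the error by a factor $(2w(\mathbf{x}))^{\ell}$, where $\ell$ is the number of reduction steps, and $\ell$ can be comparable to the number of atoms, so one must choose the order and the grouping of the reductions carefully --- this is the combinatorial content isolated by Mukhin --- in order to keep the total amplification $C_0(w(\mathbf{x}))$ polynomial, indeed essentially linear, in $w(\mathbf{x})$, which is what matches the $(w(\mathbf{x})h)^{2}$ in the denominator of the statement. The other ingredients --- the equivalence between the arithmetic-progression hypothesis and $\gcd\{b_i\}=1$, the Bezout bound used in the reduction step, and the trivial cases $k\le 2$, where $\mathbf{x}$ has at most one non-zero difference and the inequality is immediate --- are routine.
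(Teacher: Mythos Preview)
Your proposal leaves open exactly the step you yourself label ``the main obstacle.'' You set up an iterative gcd reduction in which the error amplifies by a factor $O(w(\mathbf{x}))$ at each step, then assert that by a clever ordering one can keep the total amplification linear in $w(\mathbf{x})$; but you never carry this out, and it is the whole content of the inequality. As written, what you actually establish is $\max_i\langle b_id'\rangle\ge c\,w(\mathbf{x})^{-\ell}$ with $\ell$ the number of reduction steps, which does not give the lemma.

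The paper bypasses the iteration entirely. After passing from $H(X,d)$ to the comparable quantity $D(X,d)=\inf_\alpha\bE\langle(X-\alpha)d\rangle^2$ (a lemma of Mukhin) and arguing that the minimising $\alpha$ may be taken as $-x_kd$ for some atom $x_k$, it uses Cauchy--Schwarz to reduce to a lower bound on the \emph{sum} $S=\sum_i\langle y_id\rangle$, where $y_i=x_i-x_k$. The decisive single step is a bounded B\'ezout identity: since $\gcd(y_1,\dots,y_{k-1})=h$, there exist integers $\lambda_i$ with $|\lambda_i|\le w(\mathbf{x})h$ and $\sum_i\lambda_iy_i=h$. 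Using $\langle y_id\rangle\ge\langle\lambda_iy_id\rangle/|\lambda_i|$ termwise together with subadditivity of $\langle\cdot\rangle$ yields
\[
S\ \ge\ \frac{1}{w(\mathbf{x})h}\Big\langle\sum_i\lambda_iy_id\Big\rangle\ =\ \frac{\langle hd\rangle}{w(\mathbf{x})h}\ \ge\ \frac{1}{4\,w(\mathbf{x})h}
\]
for $d\in[\tfrac1{4h},\tfrac1{2h}]$. One application of B\'ezout with controlled coefficients replaces your chain of pairwise reductions and delivers the linear dependence on $w(\mathbf{x})$ directly.

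There is also a secondary loss in your Step~1. You retain only $(p/k)\max_i\langle b_id'\rangle^2$; even granting the bounded B\'ezout identity, from $\langle d'\rangle\le\sum_i|\lambda_i|\,\langle b_id'\rangle\le(k-1)\,w(\mathbf{x})\max_i\langle b_id'\rangle$ you get only $\max_i\langle b_id'\rangle\gtrsim 1/(k\,w(\mathbf{x}))$, hence $H_h(X)\gtrsim p/\bigl(k^3(w(\mathbf{x})h)^2\bigr)$, which is off by a factor $k^2$ from the stated bound. Passing through the sum via Cauchy--Schwarz, as the paper does, is what secures the correct $k$-dependence.
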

\begin{proof}
For $d\in \mathbb{R}$, consider
 \begin{align}\label{eq:D}
D(X,d) & := \inf_{\alpha\in\bR} \bE\langle (X-\alpha)d\rangle^2.
 \end{align}
By \cite[Lemma 1]{Mukhin1}, we have that $D(X,d) \leq H(X,d) \leq 4D(X,d)$. Therefore,
$$
H_h(X)\geq \min_{1/4h\leq d\leq 1/2h} D(X,d)
$$
For all $\mathbf{x}\in \mathbb{Z}^k$ and $\beta,d\in \mathbb{R}$, define
 \begin{align}\label{eq:D_prime}
  S(\mathbf{x},\beta,d) :=  \sum_{i=1}^k \langle \beta + x_i d \rangle
 \end{align}
 Cauchy-Schwartz's inequality implies that
\begin{align}
D(X,d) & := \inf_{\alpha\in\bR} \bE\langle (X-\alpha)d\rangle^2 \nonumber\\
& = \inf_{\beta \in\bR} \sum_{i=1}^k \langle \beta+x_i d \rangle^2 \bP(X=x_i) \nonumber\\
&\geq \min_{i\in[k]} \bP(X=x_i) \inf_{\beta \in\bR} \sum_{i=1}^k \langle \beta+x_i d \rangle^2 \nonumber\\ 
& \geq \frac{\min_{i\in[k]} \bP(X=x_i)}{k} \inf_{\beta \in\bR} S(\mathbf{x},\beta, d)^2.\label{eq:D_from_D_prime}
\end{align}
It thus suffices to bound the infimum of $S$ when $\mathbf{x}$ and $d$ are fixed. The derivative of $S$ with respect to $\beta$ satisfies the following properties:
\begin{itemize}
 \item[(i)] it is well defined for all $\beta$ such that $\langle \beta + x_i d \rangle \not\in \{0,1/2\}$ for all $i\in [k]$;
 \item[(ii)] it is constant between any two consecutive values at which the derivative is undefined;
 \item[(iii)] it takes integer values in $\{-k,\dots,k\}$ anywhere where it is defined.
\end{itemize}
Thus, the minimum of $S$ is attained at $\beta_0$, for which the derivative is not defined. By relabelling the $x_i$, we may assume that $\langle \beta_0 + x_k d \rangle\in \{0,1/2\}$. If $\langle \beta_0 + x_k d \rangle = 1/2$, plugging it in~\eqref{eq:D_from_D_prime} we would get the desired bound and we are done. So we may assume that $\langle \beta_0 + x_k d \rangle = 0$, and, in fact, we can choose $\beta_0= -x_k d$. For $i\in [k-1]$, define $y_i:= x_i-x_k$.
As the $x_i$ are not all contained in a non-trivial arithmetic progression then $\hcf(y_1,y_2,\ldots,y_{k-1}) = h$.
By a simple extension of B\'ezout's Lemma there exist $\lambda_i\in \mathbb{Z}$ with $|\lambda_i| \leq w(\mathbf{x}) h$ for all $i\in [k-1]$ and $\lambda_1 y_1 + \lambda_2 y_2 + \ldots +\lambda_{k-1} y_{k-1}=h$.
Now, using the identities $\langle t \beta \rangle \leq |t| \langle \beta \rangle$ for any $t \in \bZ$ and $\langle \beta_1 + \beta_2 \rangle \leq \langle \beta_1 \rangle + \langle \beta_2 \rangle$, we obtain
\begin{align}\label{eq:D_lower_bound}
 \inf_{\beta\in \mathbb{R}}S(\mathbf{x},\beta,d) = \sum_{i=1}^k \langle \beta_0 + x_i d\rangle = \sum_{i=1}^{k-1} \langle y_i d\rangle \geq \sum_{i=1}^{k-1} \frac{\langle \lambda_i y_i d\rangle}{|\lambda_i|} 
 \geq  \dfrac{\left\langle \sum\limits_{i=1}^{k-1}\lambda_i y_i d\right\rangle}{w(\mathbf{x})h} = \frac{\langle hd\rangle}{w(\mathbf{x})h}
\end{align}
Observing that $\langle hd\rangle=hd$ for all $d\leq 1/2h$, we obtain,
\begin{align*}
H_h(X)\geq \min_{1/4h\leq d\leq 1/2h} D(X,d) \geq \min_{1/4h\leq d\leq 1/2h} \frac{\min_{i\in[k]} \bP(X=x_i)}{k} \left(\frac{\langle hd\rangle}{w(\mathbf{x})h}\right)^2 =  \frac{\min_{i\in[k]} \bP(X=x_i)}{16 k(w(\mathbf{x})h)^2}.
\end{align*}

\end{proof}

\section{Barely subcritical regime for the configuration model}

\subsection{Exploration process}\label{sec:explo_CM}

In this section we introduce a process that given a vertex $v\in [n]$ explores $\CMn$ starting by the component containing $v$. We set a total order of the half edges as follows. For every vertex $v$, consider an arbitrary order of its $d_v$ half-edges. Then, the half edges are ordered, first by its corresponding vertex (using the total order on $[n]$) and then by the order given within the half-edges incident to a vertex.

We will denote by $\cF_t$ the history of the process at time $t$. With a slight abuse of notation, we will assume that $\cF_t$ is the subgraph formed by the partial matching at time $t$. Note that the order of the pairings is determined by the knowledge of the matching. The main random variable we would like to track is $X_t=X_t(v)$, defined as the number of unmatched half-edges incident to $V(\cF_t)$ when the process started at $v$. Note that if $X_t=0$, there are no unpaired half-edges and thus $\cF_t$ is a union of components of $\CMn$ containing the component of $v$.

The \emph{exploration process of $\CMn$ starting at $v\in [n]$} is defined as follows:
\begin{itemize}
\item[1)] Let $\cF_0$ be the single-vertex graph on $\{v\}$ and $X_0=d_v$.
\item[2)] While $V(\cF_t)\neq [n]$,
\begin{itemize}
\item[2a)] If $X_t=0$, choose a uniformly unmatched half-edge and let $u$ be the vertex incident to it. Let $\cF_{t+1}$ be constructed from $\cF_t$ by adding $\{u\}$ as an isolated vertex, and let $X_{t+1}=d_u$.
\item[2b)] Otherwise, choose the smallest unmatched half-edge $e$ incident to $V(\cF_t)$ and pair it with a half-edge $f$ chosen uniformly at random from all the unmatched ones. Let $u$ be the vertex incident to $f$. 
\begin{itemize}
\item[i)] If $u\notin V(\cF_t)$, let $\cF_{t+1}$ be constructed from $\cF_t$ by adding vertex $u$ and edge $ef$ and let $X_{t+1}=X_t+d_u-2$.
\item[ii)] Otherwise, let $\cF_{t+1}$ constructed from $\cF_t$ by adding edge $ef$ and let  $X_{t+1}=X_t-2$.
\end{itemize}
\end{itemize}
\end{itemize}

Note that $X_t$ is measurable with respect to $\cF_t$. We define the following parameters:
\begin{align}
\begin{split}
\eta_{t+1}&\coloneqq X_{t+1}-X_t\;,\\
M_t &\coloneqq X_t+\sum_{u\notin V(\cF_t)} d_u\;,\\
Q_t & = \frac{1}{M_t-1}\sum_{u\notin V(\cF_t)} d_u(d_u-2)\;,\\
R_t & = \frac{1}{M_t-1}\sum_{u\notin V(\cF_t)} d_u(d_u-2)^2\;.
\end{split} 
\end{align}
It is straightforward to check that if $X_t>0$, then
\begin{align}\label{eq:exp_eta1}
\E{\eta_{t+1}\mid \cF_t}=Q_t\quad \text{ and }\quad \E{(\eta_{t+1})^2\mid \cF_t}=R_t\;,
\end{align}
and if $X_t=0$, then
\begin{align}\label{eq:exp_eta2}
\E{\eta_{t+1}\mid \cF_t}=\frac{1}{M_t}\sum_{u\notin V(\cF_t)} 	d_u^2\quad \text{ and }\quad \E{(\eta_{t+1})^2\mid \cF_t}\geq \frac{R_t}{2}\;,
\end{align}
although we will never study the process for $t$ such that $X_t=0$.
\subsection{Stochastic domination and random sums}

Recall the definition of $T=T_n$ given in~\cref{def:T}.

Define the distribution $\beta$ as follows: for every $\ell\in L\coloneqq\{-1,0,1,\dots,n-3\}$,
\begin{align}\label{def:beta}
\p{\beta=\ell} \coloneqq
\begin{cases}
\frac{m}{m-2T}\,\p{\eta=-1} - \frac{2T}{m-2T}&\text{if }\ell=-1\;,\\
\frac{m}{m-2T}\,\p{\eta=\ell}&\text{if }\ell\geq 0\;.
\end{cases}
\end{align}
Let $\varphi_{\beta}(\theta)$, $\theta_0^\beta$, $Q_{\beta}$, $R_{\beta}$ and $T_\beta$ be defined as in~\cref{def:Q,def:R,def:MGF,def:T} replacing $\eta$ by $\beta$. By the choice of $\beta$ all main parameters are asymptotically equal to the original ones, as the following result demonstrates.
\begin{lemma} \label{lem:T_asympt_equal}
For every $k\geq 0$, we have $\varphi^{(k)}_{\beta}(\theta)=(1+o(\theta_0))\varphi^{(k)}(\theta) + o(\theta_0)$.
Moreover, $T_\beta=(1+o(1))T$.
\end{lemma}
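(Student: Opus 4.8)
The engine of the proof is an exact affine relation between $\varphi_\beta$ and $\varphi$. Summing the two cases of \cref{def:beta}, for every $\theta\in\bR$ one has
\[
\varphi_\beta(\theta)=\sum_{\ell\in L}e^{\theta\ell}\,\p{\beta=\ell}=\frac{m}{m-2T}\,\varphi(\theta)-\frac{2T}{m-2T}\,e^{-\theta}.
\]
Since $\eta$ has finite support, $\varphi$ is entire, and differentiating $k$ times gives $\varphi_\beta^{(k)}(\theta)=\frac{m}{m-2T}\varphi^{(k)}(\theta)-(-1)^k\frac{2T}{m-2T}e^{-\theta}$. The hypothesis $\theta_0 m\ge\omega(n)T$ of \cref{thm:CM1} gives $T/m\le\theta_0/\omega(n)=o(\theta_0)$ and, as $\theta_0<1$, also $T=o(m)$; hence $\frac{m}{m-2T}=1+o(\theta_0)$ and $\bigl|\tfrac{2T}{m-2T}e^{-\theta}\bigr|\le\tfrac{2T}{m-2T}=o(\theta_0)$ uniformly for $\theta\ge0$. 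Substituting these into the identity proves the first assertion, uniformly for $\theta\ge0$.

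For $T_\beta=(1+o(1))T$ I first locate $\theta_0^\beta$. By \cref{assum:CM}(iii) one has $\varphi''(\theta)=\E{\eta^2e^{\theta\eta}}>0$ and $\varphi_\beta''(\theta)=\E{\beta^2e^{\theta\beta}}>0$, so $\varphi,\varphi_\beta$ are strictly convex and $\theta_0,\theta_0^\beta$ are the unique zeros of $\varphi',\varphi_\beta'$. The identity gives $\varphi_\beta'(\theta_0)=\tfrac{2T}{m-2T}e^{-\theta_0}>0$, so $\theta_0^\beta<\theta_0$, and by the mean value theorem $\theta_0-\theta_0^\beta=\varphi_\beta'(\theta_0)/\varphi_\beta''(\xi)$ for some $\xi\in(\theta_0^\beta,\theta_0)$. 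Here $\varphi_\beta''(\xi)\ge\varphi''(\xi)-\tfrac{2T}{m-2T}\ge e^{-1}R-\tfrac{2T}{m-2T}=(1-o(1))e^{-1}R$, using $\tfrac{m}{m-2T}\ge1$ and $\varphi''(\xi)=\E{\eta^2e^{\xi\eta}}\ge e^{-\xi}\E{\eta^2}\ge e^{-1}R$ (valid since $\eta\ge-1$ and $\xi<1$). As $R$ is bounded away from $0$ (remark after \cref{assum:CM}), $\theta_0-\theta_0^\beta=O(T/m)=o(\theta_0)$; in particular $\theta_0^\beta=(1+o(1))\theta_0$.

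Next I substitute $\theta_0^\beta\approx\theta_0$ and $\varphi_\beta\approx\varphi$ into \cref{def:T}. Using $\E{D_ne^{\theta D_n}}n=m e^{2\theta}\varphi(\theta)$, write $T=c_1^{-1}\log\!\bigl(c_1^{3/2}c_2^{-1/2}c_3\bigr)$ with $c_1=\log(1/\varphi(\theta_0))$, $c_2=\varphi''(\theta_0)$, $c_3=m e^{2\theta_0}\varphi(\theta_0)$, and $T_\beta$ the same expression with $\theta_0\to\theta_0^\beta$, $\varphi\to\varphi_\beta$. The key clean step is $c_1^\beta=(1+o(1))c_1$: the map $\theta\mapsto\varphi(\theta)-e^{-\theta}$ vanishes at $0$ and has positive derivative $\varphi'(\theta)+e^{-\theta}$ on $[0,\infty)$ (for large $n$, since $\varphi'\ge Q=o(1)$), so $\varphi\ge e^{-\cdot}$ there and the identity yields $\varphi_\beta\ge\varphi$ on $[0,\infty)$; therefore
\[
\varphi(\theta_0)\le\varphi_\beta(\theta_0^\beta)=\min_\theta\varphi_\beta(\theta)\le\varphi_\beta(\theta_0)=\varphi(\theta_0)+\tfrac{2T}{m-2T}\bigl(\varphi(\theta_0)-e^{-\theta_0}\bigr)\le\varphi(\theta_0)+\tfrac{2T\theta_0}{m-2T},
\]
using $\varphi(\theta_0)\le1$ and $1-e^{-\theta_0}\le\theta_0$. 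Since $c_1\ge1-\varphi(\theta_0)=\int_0^{\theta_0}u\,\varphi''(u)\,du\ge\tfrac{e^{-1}R}{2}\theta_0^2$ and $\varphi(\theta_0)\ge e^{-1}$, this gives $0\le c_1-c_1^\beta=\log\!\bigl(\varphi_\beta(\theta_0^\beta)/\varphi(\theta_0)\bigr)=O(T\theta_0/m)$ and hence $c_1^\beta/c_1=1-O\!\bigl(\tfrac1{\omega(n)R}\bigr)=1+o(1)$. Likewise $c_3^\beta=(1+o(1))c_3$, since $e^{2\theta_0^\beta}=(1+o(1))e^{2\theta_0}$ and $\varphi_\beta(\theta_0^\beta)=(1+o(1))\varphi(\theta_0)$ (both lie in $[\varphi(\theta_0),1]$ and differ by $o(1)$).

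What remains, and where the real work lies, is $c_2^\beta=\varphi_\beta''(\theta_0^\beta)=(1+o(1))c_2$. The identity gives $\varphi_\beta''(\theta_0^\beta)=(1+O(T/m))\varphi''(\theta_0^\beta)=(1+o(1))\varphi''(\theta_0^\beta)$ because $\varphi''(\theta_0^\beta)\ge e^{-1}R$ is bounded below, so the task reduces to $\varphi''(\theta_0^\beta)=(1+o(1))\varphi''(\theta_0)$, i.e.\ to controlling the variation of $\varphi''$ over $[\theta_0^\beta,\theta_0]$; quantitatively one needs $(\theta_0-\theta_0^\beta)\sup_{[\theta_0^\beta,\theta_0]}|\varphi'''|=o(\varphi''(\theta_0))$. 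This is the main obstacle, delicate precisely because $\varphi''(\theta_0)$ can be as large as $\Theta(\Delta_n)$ while the gap $\theta_0-\theta_0^\beta$ is only controlled through $T/m$: here one must exploit the moment hypotheses $\bE(D_n^4)\le\Delta_n^{1/2}$ and $\Delta_n\le n^{1/6}$ of \cref{assum:CM}(v) and \cref{thm:CM1}, together with the constraint $\sum_{\ell\ge1}\ell\,\p{\eta=\ell}e^{\theta_0\ell}=e^{-\theta_0}\p{\eta=-1}\le1$ coming from $\varphi'(\theta_0)=0$, which caps the contribution of large degrees to $\varphi'''$ relative to $\varphi''$. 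Granting this, the three estimates give $(c_1^\beta)^{3/2}(c_2^\beta)^{-1/2}c_3^\beta=(1+o(1))\,c_1^{3/2}c_2^{-1/2}c_3$ for the arguments of the outer logarithms; since the hypothesis of \cref{thm:CM1} forces this argument to tend to infinity, taking logarithms and dividing by $c_1^\beta=(1+o(1))c_1$ gives $T_\beta=(1+o(1))T$.
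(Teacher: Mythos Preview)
Your first paragraph is correct and is exactly the paper's argument: the affine identity between $\varphi_\beta^{(k)}$ and $\varphi^{(k)}$, together with $T/m=o(\theta_0)$ from the standing hypothesis, gives part one immediately.

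For the second part your unified (no case split) approach diverges from the paper. Your treatment of $c_1^\beta=(1+o(1))c_1$ is clean---the sandwich $\varphi(\theta_0)\le\varphi_\beta(\theta_0^\beta)\le\varphi(\theta_0)+O(T\theta_0/m)$ combined with the Taylor lower bound $c_1\ge cR\theta_0^2$ works without invoking $\Delta_n\le n^{1/6}$. But there is an explicit, acknowledged gap at $c_2^\beta=(1+o(1))c_2$: you reduce it to $(\theta_0-\theta_0^\beta)\sup_{[\theta_0^\beta,\theta_0]}|\varphi'''|=o(\varphi''(\theta_0))$ and then write ``Granting this''. The issue you flag is real: with only $\theta_0-\theta_0^\beta=O(T/m)$ and the crude $|\varphi'''(\theta)|\le\Delta^2+1$ coming from the constraint $\varphi'(\theta_0)=0$, the needed smallness is not clear, and your one-sentence sketch of how the fourth-moment hypothesis would enter is not a proof. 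You also assert without argument that the argument of the outer logarithm tends to infinity; this too needs justification (it is equivalent to $Tc_1\to\infty$).

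The paper avoids the third-derivative estimate altogether by splitting into two regimes. When $\Delta|Q|=o(R)$ it imports the explicit Taylor asymptotics of \S\ref{sec:pfcor}, namely $\theta_0\sim|Q|/R$, $\log\varphi(\theta_0)^{-1}\sim Q^2/(2R)$ and $\varphi''(\theta_0)\sim R$, and the same analysis applied to $\beta$ gives each $c_i^\beta\sim c_i$ directly. When $R=O(\Delta|Q|)$ it uses $\Delta_n\le n^{1/6}$ to force $|Q|$ to be at least of order $n^{-1/6}$, then derives the elementary bound $\varphi(\theta_0)\le 1-Q^2/(10\Delta)$ to get $T\le(10\Delta/Q^2)\log n$; this makes $T^2|Q|=o(mR)$, which sharpens your sandwich to $\varphi_\beta(\theta_0^\beta)-\varphi(\theta_0)=o(1/T)$, and together with $1-\varphi(\theta_0)\gtrsim 1/T$ yields $c_1^\beta\sim c_1$. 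In either case the paper then claims the remaining factors are handled by part one. So the paper's strategy is to push all the quantitative work into $c_1$ via the case split, rather than attacking $c_2$ head-on as you propose; your route would be more elegant if the $c_2$ step could be completed, but as written it is not.
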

\begin{proof}
The first part of the lemma follows directly from
\begin{align}
\varphi^{(k)}_{\beta}(\theta) = \bE(\beta^k e^{\theta \beta}) &= \frac{m}{m-2T}\bE(\eta^k e^{\theta \eta}) - \frac{2T}{m-2T} (-1)^k e^{-\theta}\label{eq:phikbeta2} \\
&= (1+O(T/m))\varphi^{(k)}(\theta) + O(T/m)\nonumber\\
&= (1+o(\theta_0))\varphi^{(k)}(\theta) + o(\theta_0).\label{eq:phikbeta}
\end{align}
where in the last line we used the hypothesis  $T=o(m\theta_0)$ in \cref{thm:CM1}.

For the second part, we split into two cases. If $\Delta|Q| = o(R)$, we are in the setting of~\cref{cor:CM2}. In such case 
$$
\log(\varphi_\beta(\theta_0^\beta))^{-1}\sim \frac{Q_\beta^2}{2R_\beta}\sim \frac{Q^2}{2R}\sim \log(\varphi(\theta_0))^{-1},
$$
(see~\cref{sec:pfcor} for the first and third equivalences) and the result follows from the first part of the lemma.

Otherwise $R=O(\Delta|Q|)$. As $R$ is bounded away from zero by \cref{assum:CM} and $\Delta_n\leq n^{1/6}$, it follows that $|Q|$ is of order at least $n^{-1/6}$. Again all we need to show is that $\log(\varphi_\beta(\theta_0^\beta))^{-1} = (1+o(1)) \log(\varphi(\theta_0))^{-1}$ and then the rest will follow by the first part of the lemma. We do this by bounding $\varphi(\theta_0) - \varphi_\beta(\theta^\beta_0)$.

%
%
%

Using $e^x\geq 1+x$, we have 
$$
0=\bE[\eta e^{\theta_0\eta}]\geq \bE[\eta(1+\theta_0\eta)]= Q+\theta_0 R
$$
and thus 
\begin{align}\label{eq:bound_theta_0}
0< \theta_0 \leq \frac{|Q|}{R}=o(1).
\end{align}
By construction, $\beta$ stochastically dominates $\eta$ and it follows that $\varphi^{(k)}_\beta(\theta)\geq \varphi^{(k)}(\theta)$ for all $k\geq 0$ and $\theta\geq 0$. In particular, 
\begin{align}\label{eq:bound_theta_0^beta}
0< \theta_0^\beta \leq \theta_0
\end{align} 
Combining~\eqref{eq:phikbeta2} for $k=0$,~\eqref{eq:bound_theta_0} and~\eqref{eq:bound_theta_0^beta},
\begin{align}\label{eq:diff_theta_beta}
\varphi_\beta(\theta_0^\beta) - \varphi(\theta_0^\beta) = \frac{2T}{m-2T}(\varphi(\theta_0^\beta) - e^{-\theta_0^\beta} )\leq \frac{2T\theta_0}{m} (1+o(1)) = O\left( \frac{T|Q|}{mR}\right).
\end{align}
As $\varphi_\beta''$ is an increasing function with 
$\varphi_\beta''(0) = (1+o(1))R$ and $\varphi_{\beta}'(\theta_0^\beta)=0$, the fundamental theorem of calculus implies
\begin{align}\label{eq:FTC1}
(\theta_0-\theta_0^\beta) R\leq (1+o(1)) \int^{\theta_0}_{\theta_0^\beta} \varphi_\beta''(t) dt= (1+o(1))\varphi_\beta'(\theta_0) = O\left(\frac{T}{m}\right).
\end{align}
where the last equality follows from~\eqref{eq:phikbeta}.

We have $|\varphi'(t)|\leq |Q|$ for all $t\in [0,\theta_0]$; indeed, $\varphi'$ is increasing with $\varphi'(0) = Q$ and $\varphi'(\theta_0) = 0$, Similarly as before, using~\eqref{eq:FTC1} we conclude that 
\begin{align}\label{eq:FTC2}
\varphi(\theta_0) - \varphi(\theta_0^\beta) =  \int_{\theta_0^\beta}^{\theta_0} \varphi'(t) dt \leq (\theta_0-\theta_0^\beta) |Q| = O\left(\frac{T |Q|}{m R}\right) .
\end{align}
Combining~\eqref{eq:diff_theta_beta} and~\eqref{eq:FTC2},
\begin{align}\label{eq:phi_triangle}
\varphi_\beta(\theta_0^\beta) - \varphi(\theta_0) =  O\left(\frac{T |Q|}{m R}\right).
\end{align}
Recall that $\varphi'(0)=Q<0$. Using the inequality $e^x\leq 1+2x$ for $x\in[0,1]$, we have 
$$
\varphi'\left(\frac{|Q|}{5\Delta}\right) \leq \bE\left[\eta\left(1+ \frac{2|Q|\eta}{5\Delta}\right)\right] \leq Q + \frac{2|Q|R}{5\Delta} <0
$$
where the final inequality holds because $Q<0$ implies $R < 2\Delta$.
It follows that $\theta_0 \geq \frac{|Q|}{5\Delta}$.

By using the fact that $\varphi'(\theta)<0$ for all $\theta \in [0,\theta_0)$ and Taylor expansion of $\varphi(\theta)$ around $\theta=0$, we obtain
\begin{align}
 \varphi(\theta_0)  \leq \varphi\left(\frac{|Q|}{5\Delta}\right)& \leq 1 - \frac{Q^2}{5\Delta} + \sum_{k\geq 2} \frac{\bE[\eta^k]}{ k!}\cdot \left(\frac{|Q|}{5\Delta}\right)^k \nonumber\\
 & \leq 1 - \frac{Q^2}{5\Delta} + \frac{2Q^2}{25\Delta} \sum_{\ell \geq 0} \left(\frac{|Q|}{5}\right)^\ell\nonumber\\
 &\leq  1 - \frac{Q^2}{10\Delta}. \label{eq:phi_th0_bound}
\end{align}
where in the third inequality we used that $\bE[\eta^k] \leq 2 \Delta^{k-1}$ for all $k\in\mathbb{N}$, since $Q< 0$.


Using the bound~\eqref{eq:phi_th0_bound} in the definition of $T$ gives the simple upper bound $T \leq \frac{10\Delta}{Q^2}\log(n)$. By our bounds on $\Delta$ and $Q$, $T^2|Q|= O(n^{-5/6}\log n)=o(m R)$. Thus, substituting this into~\eqref{eq:phi_triangle}, we have
\begin{align}\label{eq:final_bound_theta_0}
\varphi_\beta(\theta_0^\beta) - \varphi(\theta_0) = o\left(\frac{1}{T}\right).
\end{align}

By definition, $\log(\varphi(\theta_0))^{-1} \geq 1/T$. Therefore, 
\begin{align}\label{eq:bounded_away_from_1}
\varphi(\theta_0)\leq e^{1/T} = 1 - \frac{1+o(1)}{T}
\end{align}
Combining~\eqref{eq:final_bound_theta_0} and~\eqref{eq:bounded_away_from_1},
\begin{align*}
\log(\varphi_\beta(\theta_0^\beta))^{-1}\sim 1-\varphi_\beta(\theta_0^\beta)\sim 1-\varphi(\theta_0) \sim \log(\varphi(\theta_0))^{-1},
\end{align*}
concluding the proof of the lemma.


\end{proof}

Let $(\beta_t)_{t\geq 1}$ be a sequence of iid copies of $\beta$. For $s\in N$, define the stochastic process $W_t=W^s_t$ by $W_0=s$ and for $t\geq 0$
\begin{align}\label{def:W}
W_{t+1}=W_t+\beta_t = s + \sum_{i=1}^t \beta_i\;.
\end{align}
Define the stopping time
\begin{align*}
\tau^s_W \coloneqq \inf\{t:\, W^s_t=0\}\;.
\end{align*}

Let $h$ be the largest possible common difference of a progression upon which the limiting degree sequence $D$ is supported, that is
$$
h:=\max\{j: \exists k\text{ s.t. } \bP(D \in \cL(k,j))=1\}
$$
\begin{lemma}\label{lem:UB_UP}
For every $t\geq T_\beta$ and $s=s(n)$ we have
\begin{align}\label{eq:stop_time_exact}
\p{\tau^s_W=t} \leq  2h\cdot se^{\theta_0^\beta s} \left({\varphi_{\beta}''(\theta_0^\beta)}\right)^{-1/2} \frac{(\varphi_{\beta}(\theta_0^\beta))^t}{t^{3/2}}\;.
\end{align}

Moreover, for every $\epsilon>0$ we have that
\begin{align}\label{eq:stop_time_tail}
\p{\tau^s_W\geq (1+\epsilon)T_\beta} = o\left(\frac{ s e^{\theta_0^\beta s}}{\E{D_n e^{\theta_0^\beta D_n }}}\cdot \frac{T_\beta}{n}\right)\;.
\end{align}
\end{lemma}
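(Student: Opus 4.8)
Inequality \eqref{eq:stop_time_exact} is a first‑passage‑density estimate for a random walk with negative drift, and I would obtain it by combining a cycle‑lemma identity, an exponential tilt that re‑centres the walk, and the local limit theorem of the previous section; \eqref{eq:stop_time_tail} then follows by summing \eqref{eq:stop_time_exact}. Since $\beta$ is supported on $\{-1,0,1,\dots\}$, the walk $W^s$ falls by at most one per step, so the Kemperman/Otter--Dwass hitting‑time identity gives, for every $t\ge1$,
\[
\p{\tau^s_W=t}=\frac{s}{t}\,\p{S_t=-s},\qquad S_t:=\sum_{i=1}^{t}\beta_i
\]
(both sides vanish unless $t\equiv s\pmod h$, so we may assume $-s$ lies on the lattice supporting $S_t$). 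Let $\hat\beta$ have law $\p{\hat\beta=\ell}=e^{\theta_0^\beta\ell}\p{\beta=\ell}/\varphi_\beta(\theta_0^\beta)$; as $\varphi_\beta'(\theta_0^\beta)=0$ it is centred, with variance $\hat\sigma^2=\varphi_\beta''(\theta_0^\beta)/\varphi_\beta(\theta_0^\beta)\ge\varphi_\beta''(\theta_0^\beta)$ (using $\varphi_\beta(\theta_0^\beta)\le\varphi_\beta(0)=1$) and third absolute moment $\hat\gamma=\E{|\beta|^3e^{\theta_0^\beta\beta}}/\varphi_\beta(\theta_0^\beta)$, and tilting back,
\[
\p{S_t=-s}=e^{\theta_0^\beta s}\,\varphi_\beta(\theta_0^\beta)^{t}\,\p{\hat S_t=-s},\qquad \hat S_t:=\sum_{i=1}^{t}\hat\beta_i .
\]

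\textbf{Local limit step.} By Assumption~\ref{assum:CM}(iv) the span of $\hat\beta$ equals $h$, so \cref{lllt_coro} applied to $\hat S_t$ gives
\[
\p{\hat S_t=-s}\le\frac{h}{\sqrt{2\pi t\hat\sigma^2}}+\frac{32h\hat\gamma}{\hat\sigma^4 t}+\frac{6\hat\gamma}{h\hat\sigma^2 t\,H_h(\hat\beta)} .
\]
The Gaussian term is at most $h/\sqrt{2\pi t\varphi_\beta''(\theta_0^\beta)}$, and I would show the two error terms together are at most $(2-1/\sqrt{2\pi})\,h/\sqrt{t\varphi_\beta''(\theta_0^\beta)}$ for all $t\ge T_\beta$, so that $\p{\hat S_t=-s}\le 2h/\sqrt{t\varphi_\beta''(\theta_0^\beta)}$ and, with the first paragraph, \eqref{eq:stop_time_exact} follows. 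Both error terms are $O(1/t)$ while the target is $\Theta(t^{-1/2})$, so it suffices to check the ratios at $t=T_\beta$; this needs (a) $\varphi_\beta''(\theta_0^\beta)$ bounded away from $0$ --- which holds since it dominates the contribution of the bounded atoms, whose second‑moment weight is bounded below by Assumption~\ref{assum:CM}(i),(iii), and $\varphi_\beta''\approx\varphi''$ by \cref{lem:T_asympt_equal}; (b) $H_h(\hat\beta)=\Omega(1)$; and (c) $\hat\gamma=o(\sqrt{T_\beta})$. For (b): because $h$ is maximal for $D$, a fixed finite set $A$ of atoms of $D$ --- hence of $\hat\beta$, which has the same positive support --- is not contained in a proper sub‑progression of $\cL(\cdot,h)$, and these atoms are bounded with probability bounded below (Assumption~\ref{assum:CM}(i)); applying the final lemma of the previous section to $\hat\beta$ conditioned on $A$ gives a positive constant, which transfers to $H_h(\hat\beta)$ via $H(\hat\beta,d)\ge\p{\hat\beta\in A}^2H(\hat\beta\mid\hat\beta\in A,d)$.

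\textbf{Summation step.} With $\rho:=\varphi_\beta(\theta_0^\beta)\in(0,1)$, summing \eqref{eq:stop_time_exact} over $t\ge(1+\epsilon)T_\beta$ and using $\sum_{t\ge\tau_0}\rho^t/t^{3/2}\le\rho^{\tau_0}/((1-\rho)\tau_0^{3/2})$ gives
\[
\p{\tau^s_W\ge(1+\epsilon)T_\beta}\le\frac{2h\,s\,e^{\theta_0^\beta s}}{\varphi_\beta''(\theta_0^\beta)^{1/2}}\cdot\frac{\rho^{(1+\epsilon)T_\beta}}{(1-\rho)\bigl((1+\epsilon)T_\beta\bigr)^{3/2}} .
\]
By \cref{lem:T_asympt_equal} and the definition of $T_\beta$, $\rho^{T_\beta}$ equals, up to a constant, $\varphi_\beta''(\theta_0^\beta)^{1/2}(\log(1/\rho))^{-3/2}\bigl(n\,\E{D_ne^{\theta_0^\beta D_n}}\bigr)^{-1}$, so $\rho^{(1+\epsilon)T_\beta}=(\rho^{T_\beta})^{1+\epsilon}$ is smaller than $\rho^{T_\beta}$ by a polynomial amount; plugging this in, using $1-\rho\asymp\log(1/\rho)$, and invoking the crude lower bounds on $\log(1/\rho)$, $\varphi_\beta''(\theta_0^\beta)$ and $T_\beta$ together with the gain $\omega(n)\to\infty$ coming from the hypotheses of \cref{thm:CM1} (after rewriting $\theta_0\le|Q|/R$ and the bound $\theta_0 m\ge\omega(n)T_n$), the right‑hand side is $o\bigl(s e^{\theta_0^\beta s}T_\beta/(n\,\E{D_ne^{\theta_0^\beta D_n}})\bigr)$, i.e.\ \eqref{eq:stop_time_tail}.

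\textbf{Main obstacle.} The crux is (c), the bound $\hat\gamma=o(\sqrt{T_\beta})$. Since $\beta$ stochastically dominates $\eta$ and differs from it only by an $O(T/m)$ mass shift, it reduces to estimating $\E{|\eta|^3e^{\theta_0\eta}}\le\varphi'''(\theta_0)+2$. The available leverage is: $0<\theta_0^\beta\le\theta_0\le|Q|/R$ (established in the proof of \cref{lem:T_asympt_equal}); the identity $\sum_{\ell\ge1}\ell e^{\theta_0\ell}\p{\eta=\ell}=e^{-\theta_0}\p{\eta=-1}\le1$ (from $\varphi'(\theta_0)=0$); the bound $\E{|\eta|^3}=O(\E{D_n^4})=O(\Delta^{1/2})$ from Assumption~\ref{assum:CM}(v); and $\Delta_n\le n^{1/6}$. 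When $\Delta|Q|=O(R)$ one has $\theta_0\Delta=O(1)$, so $e^{\theta_0\eta}=O(1)$ on the whole support and $\hat\gamma=O(\Delta^{1/2})=o(\sqrt{T_\beta})$, because $\Delta=O(R/|Q|)=o(T_\beta)$ --- here it is essential that $|Q|=o(1)$ by Assumption~\ref{assum:CM}(ii). The remaining range $\Delta|Q|=\Omega(R)$ is the genuinely delicate case: one must split the support of $\eta$ at a scale of order $1/\theta_0$ and combine the three moment constraints above to control the contribution of the atoms above that scale, and I expect carrying this out uniformly over all sequences allowed by Assumption~\ref{assum:CM} --- precisely where conditions~(v) and $\Delta_n\le n^{1/6}$ enter --- to be the main technical difficulty.
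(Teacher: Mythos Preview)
Your proposal follows the paper's proof essentially step for step: the cycle/Spitzer inequality $\p{\tau^s_W=t}\le\frac{s}{t}\,\p{W_t=0}$ (you invoke the Kemperman--Otter--Dwass identity, which is legitimate here since $\beta\ge-1$; the paper cites the inequality form from \cite{nachmias2010critical}), the exponential tilt at $\theta_0^\beta$, the application of \cref{lllt_coro} to the centred tilted walk, and then a geometric summation that unwinds the definition of $T_\beta$. Your treatment of (a) and (b) is more explicit than the paper's, which simply records that $\sigma^2\ge\bP(\hat D_n\neq2)>0$ and that $h$, $H_h(Y_1)$ are constants.

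The only substantive difference is your item (c). The paper dispatches it in a single sentence, asserting that \cref{assum:CM} gives $\gamma=O(\Delta^{1/2})$ and then invoking $\Delta=o(T_\beta)$; it does \emph{not} carry out the case analysis you outline. That one-line claim is transparent precisely when $\theta_0\Delta=O(1)$, i.e.\ when $\Delta|Q|=O(R)$, because then $e^{\theta_0^\beta\beta}$ is uniformly bounded on the support and condition (v) transfers directly to the tilted third moment. In the complementary regime $R=O(\Delta|Q|)$ the tilt can blow up (one can build sequences satisfying \cref{assum:CM} with $\theta_0\Delta\to\infty$ and $\gamma$ of order well above $\Delta^{1/2}$), so the paper's justification is at best elliptical there. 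Your instinct to isolate this as the genuine difficulty, and to bring in the constraint $\varphi'(\theta_0)=0$ together with $\Delta_n\le n^{1/6}$, is therefore well placed rather than over-cautious.
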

\begin{proof}
The dependence on $s$ is implicit in all the notation below. Define the following sequences
\begin{align}\label{def:indices}
\begin{split}
I_t&=\{\mathbf{b}=(b_1,\dots, b_t)\in L^t:\, s+b_1+\dots+b_t=0\}\\
\hat{I}_t&=\{\mathbf{b}=(b_1,\dots, b_t)\in I_t:\, s+b_1+\dots+b_i>0, \forall i\in [t-1]\}
\end{split}
\end{align}
We can write
\begin{align}
\begin{split}
\p{W_t = 0} &= \sum_{\mathbf{b}\in I_t} \prod_{i=1}^t \p{\beta_i=b_i}
\quad \text{and}\quad
\p{\tau^s_W = t} = \sum_{\mathbf{b}\in \hat{I}_t} \prod_{i=1}^t \p{\beta_i=b_i}
\end{split}
\end{align} 
A variant of Spitzer's lemma~\cite[Lemma 9]{nachmias2010critical} implies that
\begin{align}\label{eq:spitzer}
\p{\tau^s_W = t} \leq \frac{s}{t}\p{W_t = 0}\;
\end{align} 
We use exponential tilting to bound the probability that $W_t=0$, as in~\cite{nachmias2010critical,riordan2012phase}. Consider the probability distribution $\beta_\theta$ defined for $\ell\in L$ by
\begin{align}\label{def:beta_tilt}
\p{\beta_\theta =\ell}= \frac{e^{\theta \ell}\p{\beta=\ell}}{\varphi_{\beta}(\theta)}\;.
\end{align}
Let $(\beta_{\theta,t})_{t\geq 1}$ be a sequence of iid copies of $\beta_\theta$.
Define the stochastic process $W_{\theta,t}$ by $W_{\theta,0}=s$ and for $t\geq 0$
\begin{align}\label{def:W_tilt}
W_{\theta, t+1}=s + \sum_{i=1}^t \beta_{\theta,i}\;.
\end{align}
Algebraic manipulations give
\begin{align}\label{eq:exp_tilt}
\p{W_t=0} = (\varphi_{\beta}(\theta))^t e^{\theta s} \p{W_{\theta,t}=0}\;.
\end{align}
By definition of $\theta_0^\beta$, $\E{\beta_{\theta_0^\beta}}=\E{\beta e^{\beta \theta_0^\beta}}=0$.
We may write $W_{\theta_0^\beta,t}=s+S_t$, where $S_t=\sum_{i=1}^t Y_i$  and $(Y_i)_{i\in [t]}$ is a collection of iid copies of $\beta_{\theta_0^\beta}$. In particular, we have
\begin{align}\label{eq:moments_beta_tilt}
\begin{split}
\mu&=\E{Y_1}=\varphi_{\beta}'(\theta_0^\beta)=0\;,\\ 
\sigma^2&=\E{Y_1^2}= \frac{\varphi_{\beta}''(\theta_0^\beta)}{\varphi_{\beta}(\theta_0^\beta)}\;,\\
\gamma&=\E{|Y_1|^3}\leq 2+\E{Y_1^3} =\frac{2\varphi_{\beta}(\theta_0^\beta)+\varphi_{\beta}'''(\theta_0^\beta)}{\varphi_{\beta}(\theta_0^\beta)}\;.
\end{split}
\end{align}
where in the inequality, we have used that $Y_1\geq -1$.

We will apply~\cref{lllt_coro} and show that the error term is negligible with respect to the Gaussian probability. Recall that $h$ is the step of the limiting distribution $D$, which by \cref{assum:CM} is also the step of the distribution of $Y_1$.  Since $h$ and $H_h(Y_1)$ (as defined in~\eqref{eq:H_D}) are constants, the order of the first error term in~\eqref{eq:cor_LLT} is at most the order of the second one, and it suffices to bound the latter. \cref{assum:CM} implies that $\sigma^2\geq \bP(\hat{D}_n\neq 2)>0$ for large $n$, and that $\gamma= O(\Delta^{1/2})$. Therefore, for any $t\geq T_\beta$
$$
\frac{\gamma}{\sigma^2 t} = O\left(\frac{1}{\sqrt{\sigma^2 t}}\cdot\sqrt{\frac{\Delta}{T_\beta}} \right)= o\left(\frac{1}{\sqrt{\sigma^2 t}}\right).
$$
where we used that $\Delta=o(T)$ and $T\sim T_\beta$ by \cref{lem:T_asympt_equal}.


Since $\p{Y_1=-1}>0$, we may choose $v_0=-1$. Thus, for sufficiently large $n$, we conclude that for any $w \in \cL(-t,h)$,
\begin{equation}
 \bP(S_t = w) \leq \frac{2h}{\sqrt{2\pi t\sigma^2}} \label{eq:Sn_final_bound_to_use}
\end{equation}
We can now use~\eqref{eq:Sn_final_bound_to_use} with $w=-s$ to obtain
\begin{align}\label{eq:LLT_applic}
\p{W_{\theta_0,t}=0}=\p{S_t=-s}\leq \frac{2h}{\sqrt{2\pi t}}\left(\frac{\varphi_{\beta}(\theta_0^\beta)}{\varphi_{\beta}''(\theta_0^\beta)}\right)^{1/2}
\end{align}



Let us show that $\varphi_\beta(\theta_0^\beta)$ is close to $1$. On the one hand, we will use the inequality $e^x\leq 1+xe^x$ for all $x\in \R$, with equality if and only if $x=0$. Since $\p{\beta=0}\neq 1$, by the choice of $\theta_0^\beta$
\begin{align}\label{eq:phi_theta0<1}
\varphi_{\beta}(\theta_0^\beta) = \E{e^{\theta_0^\beta \beta}}< 1+\theta_0^\beta\E{\beta e^{\theta_0^\beta \beta}}=1
\end{align}
On the other hand, using $e^x\geq 1+x$ for $x\in \R$, that $\theta_0^\beta$ is bounded on $n$ and $\E{\beta}=o(1)$, we obtain
\begin{align}\label{eq:phi_theta0>1}
\varphi_{\beta}(\theta_0^\beta) \geq 1+\theta_0^\beta \E{\beta}= 1+o(1)\;.
\end{align}
Thus, we use the asymptotic equivalence $\log (\varphi_\beta(\theta_0^\beta))^{-1}\sim 1-\varphi_\beta(\theta_0^\beta)$.

Combining~\cref{eq:spitzer,eq:exp_tilt,eq:LLT_applic,eq:phi_theta0<1}, we obtain
\begin{align*}
\p{\tau^s_W=t} \leq 2h\cdot se^{\theta_0^\beta s} \left(\varphi_{\beta}''(\theta_0^\beta)\right)^{-1/2} \frac{(\varphi_{\beta}(\theta_0^\beta))^t}{t^{3/2}}\;.
\end{align*}
proving the first part of the lemma.

For the second statement of the lemma, it suffices to prove it for small enough $\epsilon$, so we may assume $\epsilon\in (0,1)$. Observe that $\p{\tau^s_W=t}\neq 0$ implies that $s=hk-t$ for some $k\in \mathbb{Z}$. 
Since $v_0=-1$ and $h$ are coprime, there are at most $\lceil T/h \rceil$ values $t\in [T]$ such that $\p{\tau^s_W=t}\neq 0$.

As our bound on $\p{\tau^s_W=t}$ is decreasing on $t$, using \cref{eq:phi_theta0<1} it follows that 
\begin{align*}
\p{\tau^s_W\geq (1+\epsilon)T_\beta} 
&= \sum_{t\geq (1+\epsilon) T_\beta\atop t\in \cL(-1,h)} \p{\tau^s_W=t}
\leq 2 s e^{\theta_0^\beta s} \left(\varphi_{\beta}''(\theta_0^\beta)\right)^{-1/2} \frac{(\varphi_{\beta}(\theta_0^\beta))^{(1+\epsilon)T_\beta}}{T_\beta^{3/2}}  \sum_{\ell \geq 0} (\varphi_{\beta}(\theta_0^\beta))^{\ell}\\
&= 2 s e^{\theta_0^\beta s} \left(\varphi_{\beta}''(\theta_0^\beta)\right)^{-1/2} \frac{(\varphi_{\beta}(\theta_0^\beta))^{(1+\epsilon)T_\beta}}{T_\beta^{3/2}(1-\varphi_\beta(\theta_0^\beta))}  =  o\left(\frac{ s e^{\theta_0^\beta s}}{\E{D_n e^{\theta_0^\beta D_n }}}\cdot \frac{T_\beta}{n}\right)\;,
\end{align*}
where in the last equality we used that $T_\beta\sim \frac{1}{\log \varphi^{-1}_\beta(\theta_0^\beta)}\log \left(T^{-3/2}_\beta (\varphi''_\beta(\theta_0^\beta))^{-1/2} \E{D_n  e^{\theta_0^\beta D_n }}n\right)$.
\end{proof}

\subsection{Proof of \cref{thm:CM1}}

Fix $\epsilon>0$ sufficiently small. Define the stopping time  $\tau_X(v)$ as the number of edges in the component of $v$, denoted by $\cC(v)$. That is,
\begin{align*}
\tau_X(v) \coloneqq \inf\{t:\, X_t(v)=0\}\;.
\end{align*}
Note that for every $t\leq 2T\wedge \tau_X(v)$, the distribution $\beta$ stochastically dominates $\eta_t$. Thus, $X_t(v)$ is stochastically dominated by $W^{d_v}_t$. 

Let $\delta=\epsilon/3$. It follows from~\cref{lem:UB_UP,lem:T_asympt_equal} that
\begin{align}\label{eq:tau_X}
\p{\tau_X(v)\geq (1+2\delta)T} \leq \p{\tau_X\geq (1+\delta)T_\beta}\leq \p{\tau^s_W\geq (1+\delta)T_\beta}=o\left(\frac{d_v e^{\theta_0^\beta d_v}}{\E{D_n e^{\theta_0^\beta D_n }}}\cdot\frac{T}{n}\right)\;.
\end{align}

Let $Z$ be the number of components of order at least $(1+\epsilon)T$. For any $\epsilon>0$, we can write 
\begin{align}\label{eq:bound_Z}
Z= \sum_{\cC} \ind{|\cC|\geq (1+\epsilon)T} =  \sum_{v\in [n]} \frac{\ind{|\cC(v)|\geq (1+\epsilon)T}}{|\cC(v)|} \leq \frac{1}{T} \sum_{v\in [n]} \ind{|\cC(v)|\geq (1+\epsilon)T}\;,
\end{align}
where the first sum is over the connected components of $\CMn$.

Since $\cC(v)$ is a connected subgraph, it has at least $|\cC(v)|-1$ edges. Thus, the probability of $|\cC(v)|\geq k$ is bounded from above by the probability $\tau_X(v)\geq k-1$. Using~\cref{eq:tau_X} we obtain
\begin{align*}
\E{Z} \leq \frac{1}{T} \sum_{v\in [n]} \p{\tau_X(v)\geq  (1+\epsilon)T-1} = o\left( \frac{1}{\E{D_n e^{\theta_0^\beta D_n }} n}\right) \sum_{v\in [n]} d_v e^{\theta_0^\beta d_v}= o(1)\;.
\end{align*}
\cref{thm:CM1} follows by Markov's inequality on $Z$.

\subsection{Proof of \cref{cor:CM2}}\label{sec:pfcor}

Recall that $\Delta |Q|=o(R)$ and that $\varphi(\theta)$ is the moment generating function of $\eta$. Thus, $\varphi(0)=1$, $\varphi'(0)=Q$, $\varphi''(0)=R$ and  $\varphi^{(k)}(0)\leq \Delta^{k-3} R$ for all $k\geq 3$. 
This implies that the radius of convergence of $\varphi$ (and so of any of its derivatives) is at least $2|Q|/R$. So, for any $\theta$ with $|\theta|< 2|Q|/R$, we have
\begin{align*}
\varphi'(\theta)&= \varphi'(0)+ \theta\varphi''(0)+O(\theta^2 \varphi'''(0))=Q+\theta R+o(Q)\;.
\end{align*}
By the choice of $\theta_0$, we have $\varphi'(\theta_0)=0$ and $\theta_0\sim |Q|/R$.

We can also write
\begin{align*}
\varphi(\theta_0)&= \varphi(0)+ \theta_0\varphi'(0)+\frac{\theta_0^2 \varphi''(0)}{2}+o(1)\sim 1-\frac{Q^2}{2R}\;.
\end{align*}
and 
\begin{align*}
\log (\varphi(\theta_0))^{-1}&\sim \frac{Q^2}{2R}\;.
\end{align*}
Similar arguments give that $\varphi''(\theta_0)\sim R$.

Finally, observe that  for any $\delta>0$,
\begin{align}
\E{De^{D\theta_0}}\leq  \E{De^{(1+\delta)\Delta Q/R}}=  \E{De^{o(1)}}=O(1)\;.
\end{align}
Using all previous estimations, we can write
\begin{align*}
T\sim \frac{2R}{Q^2}\log \left(\frac{|Q|^3n}{R^2}\right)\;.
\end{align*}
It is straightforward to check that, in this case, the condition $\theta_0 m\geq \omega(n) T_n$ is equivalent to $Q\leq -\omega(n) n^{-1/3}R^{2/3}$.

Note that the condition $\Delta_n\leq n^{1/6}$ is only required in~\cref{lem:T_asympt_equal} in the case $R=O(\Delta |Q|)$. Thus, the desired result follows from \cref{thm:CM1} without further restrictions on the degree sequence.

\section{Subcritical regime for the uniform model}

\subsection{Exploration process}

We will use the exploration process described in~\cite{jprr2018} that, given $V_0\subset [n]$, reveals the components of $\Gn$ one by one starting with the components containing $V_0$. 

We first describe the exploration process on a fixed graph where each vertex has an order in its adjacency list. Precisely, an \emph{input} is a pair $(G,\Pi)$, with $G$ a graph on $[n]$ and $\Pi=(\pi_v)_{v\in [n]}$ a collection of permutations where $\pi_v$ has length $d_v$ and induces a natural order on the edges incident to $v$. The process constructs a sequence of sets $V_0\subset V_1\subset \dots$ such that at time $t$ all the edges in $G[V_t]$ have been revealed. Similarly as before, we define $X_t=X_t(v)=|E(V_t,[n]\setminus V_t)|$ to be the number of edges between the explored and unexplored parts. If $X_t=0$, $V_t$ is a set of vertices forming a union of components, including the ones intersecting $V_0$. We also define $E(A,B)$ to be the set of edges between sets $A$ and $B$, $M_t=\sum_{w\in [n]\setminus V_t} d_w$, and we let $L_t$ be the number of vertices of degree $1$ in $[n]\setminus V_t$.

The \emph{exploration process of $(G,\Pi)$ starting at $V_0\subset [n]$} is defined as follows:
\begin{itemize}
\item[1)] Let $X_0=|E(V_0,[n]\setminus V_0)|$.
\item[2)] While $V_t\neq [n]$,
\begin{itemize}
\item[2a)] If $X_t=0$, choose a vertex $u$ in $[n]\setminus V_t$ according to the degree distribution and let $w_{t+1}=u$, i.e. $\p{w_{t+1}=u}=\frac{d_u}{M_t}$. Let $V_{t+1}=V_t\cup \{w_{t+1}\}$ and $X_{t+1}=d_{w_{t+1}}$.
\item[2b)] Otherwise, choose $v_{t+1}$ the smallest vertex incident to at least one edge in $[n]\setminus V_t$ and let $e_{t+1}$ be the smallest edge in $E(v_{t+1},[n]\setminus V_t)$. Let $w_{t+1}$ be the endpoint of $e_{t+1}$ in $[n]\setminus V_t$. Expose all edges in $E(w_{t+1},V_t)$. Let $V_{t+1}=V_t\cup \{w_{t+1}\}$ and $X_{t+1}=X_t-1+d_{w_{t+1}}-|E(w_{t+1}, V_t)|$.
\end{itemize}
\end{itemize}
There are two main differences between this exploration process and the one defined in Section~\ref{sec:explo_CM}: we explore vertex by vertex instead of edge by edge, and we start from a set instead of a single vertex.

\newcommand{\pt}[1]{{\mathbb{P}_t}(#1)}
\newcommand{\Et}[1]{{\mathbb{E}_t}\left[#1\right]}
\newcommand{\Ei}[1]{{\mathbb{E}_i}\left[#1\right]}

We will run the exploration process on an input $(G,\Pi)$ chosen uniformly at random from all the inputs where $G$ is a graph on $[n]$ with degree sequence $\bfd_n$. This is equivalent to sampling $G\sim \Gn$ and, independently, letting $\Pi=\Pi(\bfd_n)$ be a collection of uniformly and independent permutations of lengths $(d_v)_{v\in [n]}$. We will use the \emph{principle of deferred decisions} exposing the restriction of $\pi_{v_{t+1}}$ onto $E(v_{t+1},[n]\setminus V_t)$ at time $t$. Let $(\cF_t)_{t\geq 0}$ be the filtration of the space of inputs given by the history of the process just after exposing the order on $E(v_{t+1},[n]\setminus V_t)$. The random objects $X_t$, $V_t$, $M_t$, $L_t$, $v_{t+1}$ and $e_{t+1}$ are $\cF_t$-measurable, while $w_{t+1}$ is $\cF_{t+1}$-measurable. We will use $\pt{\cdot}\coloneqq \p{\cdot \mid \cF_t}$ and $\Et{\cdot}\coloneqq \E{\cdot \mid \cF_t}$ to denote respectively the probability and expected value conditioned to $\cF_t$.

\subsection{Deterministic properties of the process}
First of all, we may assume that $m_0=o(n)$, as otherwise since $|Q_0|\leq 1$, there is nothing to prove. This implies that, $m\leq 3n$ as $\sum_{w\in [n]\setminus S_*}d_w\leq 2n$.

Define 
\begin{align}\label{def:T2}
T \coloneqq 
\frac{m_0}{|Q_0|} \geq \frac{\Delta|Q_0|+R}{Q_0^2}\log{\lambda}
\end{align}
where
\begin{align}\label{eq:inside_log}
\lambda\coloneqq \frac{nQ_0^2}{\Delta|Q_0|+R}\geq \frac{\omega(n)}{|Q_0|} \to \infty\;.
\end{align}
The last condition imposed implies that $T=o(|Q|m)$, we will use this bound repeatedly during the proof.

Let $S$ be the set that certifies $(m_0,Q_0)$-subcriticality. We can always assume that such set is formed by vertices of largest degrees. Note that our condition on $m_0$ implies that $\Delta=o(m_0)$. So, we may also assume that 
\begin{align}\label{eq:assump2}
\sum_{v\in S} d_v\geq \frac{m_0}{2}\;,
\end{align}
as increasing the set $S$ can only decrease  $Q_0$.

Throughout the proof, we will assume that 
\begin{align}\label{assump:max_deg>=2}
\Delta'\coloneqq \max_{w\in [n]\setminus S} d_w\geq 2\;.
\end{align}
Otherwise there are at most $m_0$ vertices of degree at least $2$ and any component has order at most $O(m_0)$ and we are done.

\begin{lemma}\label{lem:det}
Let $v\in [n]$ and set $V_0=S\cup\{v\}$. We have:
\begin{enumerate}
\item\label{it:prop1} $\sum_{u\in V_0} d_u\leq 2|Q_0|T$;
\item\label{it:prop3}  $\Delta' =o\left(\frac{n_1}{\Delta}\right)$.
\end{enumerate}
Moreover, for every $t=O(T)$ we have:
\begin{enumerate}\setcounter{enumi}{2}
\item\label{it:prop4}  $n_1(t)\geq n_1/2$;
\item\label{it:prop5}  $M_t\geq m/3$.
\end{enumerate}
\end{lemma}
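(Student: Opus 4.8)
My plan is to verify the five properties more or less in the order stated, since several are short consequences of the definitions and the standing assumptions $T=m_0/|Q_0|$, $\Delta=o(m_0)$, $m\le 3n$ and $\sum_{v\in S}d_v\ge m_0/2$. For property~\ref{it:prop1}, I would simply bound $\sum_{u\in V_0}d_u=\sum_{v\in S}d_v+d_v\le m_0+\Delta$. Since $\Delta=o(m_0)$ this is at most $(1+o(1))m_0\le 2m_0=2|Q_0|T$ by the definition $T=m_0/|Q_0|$; here I should be slightly careful that $|Q_0|\le 1$ so the inequality goes the right way, but that is automatic from $|Q_0|\le 1$. Property~\ref{it:prop3}, namely $\Delta'=o(n_1/\Delta)$, is where I expect a little more work: I would use the implicit bound from $R$ (noted in the introduction) that the maximum degree contributing to $R$ forces $\Delta=O(n^{1/3}R^{1/3})$, together with the lattice/degree-count bounds implied by \cref{assum:CM}; the idea is that $n_1$, the number of degree-$1$ vertices outside $V_0$, is linear in $n$ (bounded below using $\bE(D_n^2)=O(1)$ and the fact that $Q_0<0$ forces many degree-$1$ vertices), while $\Delta'\le\Delta$ is at most polynomially small compared to $n$. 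I would make this precise by writing $n_1\ge cn$ for a constant $c>0$ (this is the step I'd want to double-check against whatever degree-distribution hypotheses are actually available at this point in the paper) and $\Delta^2\Delta'\le\Delta^3=O(nR)=o(n)\cdot\text{(const)}$... actually $\Delta^3=O(nR)$ and $R=O(\Delta|Q_0|)$ or $R=O(1)$ depending on the regime, so $\Delta^3=O(n)$, giving $\Delta\Delta'\le\Delta^2=O(n^{2/3})=o(n)=o(n_1)$.

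For the two "moreover" statements, which hold for all $t=O(T)$, the strategy is that the exploration process removes at most one vertex per step, so after $t=O(T)$ steps we have removed $O(T)$ vertices, hence destroyed at most $O(T)$ degree-$1$ vertices and decreased $M_t=\sum_{w\in[n]\setminus V_t}d_w$ by at most $\sum$(degrees of removed vertices). For property~\ref{it:prop4}, $n_1(t)\ge n_1-t-|V_0|\ge n_1-O(T)$, and since $T=o(|Q_0|m)=o(m)=o(n)$ while $n_1=\Theta(n)$, this is $\ge n_1/2$ for $n$ large. For property~\ref{it:prop5}, the total degree removed up to time $t$ is at most $\sum_{u\in V_0}d_u+2t\le 2|Q_0|T+O(T)=O(T)$ (each exploration step at~2b) exposes at least one edge to the explored part and adds a new vertex of degree $\le\Delta'$, but the key accounting is that $X_t\le M_0$ and the edges exposed are bounded)—more carefully, $M_0-M_t$ equals the sum of degrees of vertices added to $V$ up to time $t$, which is at most $\sum_{u\in V_0}d_u+\Delta' t = O(|Q_0|T)+O(\Delta' T)$. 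Using $\Delta'=o(n_1/\Delta)$ and $T=o(|Q_0|m)=o(m)$, this is $o(m)$, so $M_t\ge M_0-o(m)\ge m-\sum_{u\in S}d_u-o(m)\ge m-o(m)\ge m/3$, using $m_0=o(n)=o(m)$.

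The main obstacle I anticipate is property~\ref{it:prop3}: establishing the lower bound $n_1=\Omega(n)$ on the number of degree-one vertices cleanly, and matching it against the right upper bound on $\Delta'$ in both regimes $R=O(1)$ and $R=O(\Delta|Q_0|)$. Everything else is bookkeeping: counting how many vertices (and how much degree) the exploration can consume in $O(T)$ steps and comparing with the linear-size quantities $n_1$ and $m$, all of which works because the governing hypothesis $T=o(|Q_0|m)$ makes $T$ negligible compared to $m$ and hence compared to $n$.
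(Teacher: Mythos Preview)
Your arguments for properties~\ref{it:prop1} and~\ref{it:prop4} are essentially right, though for~\ref{it:prop4} the claim $n_1=\Theta(n)$ is unjustified and in general false; the correct lower bound is $n_1\ge |Q_0|m$ (since $\sum_{w\notin S}d_w(d_w-2)\le Q_0 m$ forces $-n_1\le Q_0 m$), and then $t=O(T)=o(|Q_0|m)=o(n_1)$ gives the conclusion.

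There are genuine gaps in your arguments for properties~\ref{it:prop3} and~\ref{it:prop5}.

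For~\ref{it:prop3} you invoke \cref{assum:CM} and the bound $\Delta=O(n^{1/3}R^{1/3})$, but this lemma lives in the uniform-model section, where \cref{thm:UM1} makes no distributional assumptions on $\bfd_n$ beyond $(m_0,Q_0)$-subcriticality. In particular $n_1\ge cn$ need not hold. The paper's argument is entirely different and uses only the available hypotheses: every vertex in $S\setminus S_*$ has degree at least $\Delta'$ (since $S$ consists of the largest-degree vertices), so
\[
0\ \ge\ \sum_{u\in[n]\setminus S_*} d_u(d_u-2)\ \ge\ -n_1+(\Delta'-2)\sum_{w\in S\setminus S_*}d_w\ \ge\ -n_1+(\Delta'-2)\Bigl(\tfrac{m_0}{2}-m_*\Bigr),
\]
and with $m_0\ge 3m_*$ this yields $\Delta'=O(n_1/m_0)=o(n_1/\Delta)$.

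For~\ref{it:prop5} your bound $m-M_t\le 2|Q_0|T+\Delta' t$ is correct, but the assertion $\Delta' t=o(m)$ fails. Using the sharp bound $\Delta'=O(n_1/m_0)$ and $t=O(T)=O(m_0/|Q_0|)$ gives only $\Delta' t=O(n_1/|Q_0|)$, and since $n_1\ge |Q_0|m$ this is at least of order $m$, not $o(m)$. The paper sidesteps this by lower-bounding $M_t$ through the small-degree vertices: from $\sum_{u\in [n]\setminus S_*}d_u(d_u-2)\le 0$ one deduces $n_1+2n_2\ge (m-m_*)/2$, and since at most $O(T)=o(m)$ vertices (hence $o(m)$ degree-1/degree-2 mass) are consumed by time $t$, the contribution of degree-$1$ and degree-$2$ vertices alone already gives $M_t\ge m/3$.
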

\begin{proof}
For \cref{it:prop1}, just observe that $d_v\leq \Delta\leq m_0=|Q_0|T$.

For \cref{it:prop3}, since $\Delta'\geq 2$, $m_0\geq 3m_*$ and by~\cref{eq:neg_set,eq:surplus,eq:assump2}, we have
\begin{align*}
0\geq \sum_{v\in [n]\setminus S_*} d_v(d_v-2) \geq  -n_1 + (\Delta'-2)\sum_{w\in S\setminus S_*} d_w \geq -n_1+  (\Delta'-2)(m_0/2-m_*)\;,
\end{align*}
From here it follows that $\Delta'=O(n_1/m_0)=o(n_1/\Delta)$.

For \cref{it:prop4}, observe that $n_1\geq |Q_0|m$ and $T=o(|Q_0|m)$. So $n_1(t)\geq n_1-1-t\geq n_1/2$. 

For \cref{it:prop5}, by  \cref{eq:neg_set} we have
$$
0\geq Qm\geq -n_1+\sum_{v\in [n]\setminus S_*\atop d_v\geq 3} d_v = m-m_*- 2(n_1+n_2)\;.
$$
Counting only the contribution of vertices of degree $1$ or $2$ to $M_t$, we obtain
\begin{align*}
M_t \geq n_1+2n_2- m_* -2t \geq \frac{m}{2}+o(n)\geq \frac{m}{3}\;.
\end{align*}

\end{proof}
\subsection{Bounding the increments}

\begin{lemma}\label{lem:switch}
For any $v\in [n]$, any $t=O(T)$ and any $w\in [n]\setminus V_t$, we have
\begin{align}\label{eq:switch:UB}
\pt{w_{t+1}=w}\leq (1+o(1))\frac{d_w}{M_t}\;.
\end{align}
Moreover, if $d_w=1$
\begin{align}\label{eq:switch:LB}
\pt{w_{t+1}=w}\geq (1+o(1))\frac{d_w}{M_t}\;.
\end{align}
\end{lemma}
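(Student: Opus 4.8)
The claim compares the law of the next explored vertex $w_{t+1}$ in the uniform-model exploration with the ``ideal'' configuration-model probability $d_w/M_t$. The exploration has already committed to a partial graph on $V_t$, exposed $e_{t+1}$ and revealed $w_{t+1}$ as the endpoint of $e_{t+1}$ in $[n]\setminus V_t$. Conditioning on $\cF_t$, the remaining randomness is a uniformly random simple graph on $[n]\setminus V_t$ with degree sequence $(d'_w)_{w\in [n]\setminus V_t}$, where $d'_w = d_w$ except that the endpoints of the (few) partially exposed edges have reduced residual degrees. The quantity $\pt{w_{t+1}=w}$ is, up to normalisation, the number of such graphs in which $v_{t+1}$ (the smallest unexplored-incident vertex, which is $\cF_t$-measurable) has its smallest unexplored edge going to $w$. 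The plan is to estimate this ratio by a \emph{switching argument}: given a graph where $v_{t+1}w \notin E$, count the ways to switch an edge at $w$ into the slot at $v_{t+1}$, and conversely.

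**Key steps.** First, set up the switching precisely: fix the conditioning $\cF_t$ and work in the space $\cG$ of simple graphs on the unexplored vertices with the prescribed residual degrees. For a target vertex $w \in [n]\setminus V_t$, let $\cG_w \subseteq \cG$ be those graphs in which $v_{t+1}$'s smallest available half-edge pairs to $w$. A forward switch takes $G \in \cG_w$, picks an edge $xy$ of $G$ with $x,y \notin \{v_{t+1}, w\}$ and $x \not\sim v_{t+1}$, $y\not\sim w$ (roughly), deletes $v_{t+1}w$ and $xy$, and adds $v_{t+1}x$ and $wy$ — landing in $\cG$ but outside $\cG_w$. Counting forward switches out of each $G\in\cG_w$ gives $\approx m'$ choices (the number of edges, $\Theta(M_t)$), minus $O(\Delta' \cdot \Delta)$ forbidden ones; backward switches into a typical $G \notin \cG_w$ number $\approx d_w \cdot d'_{v_{t+1}}$, again up to lower-order corrections. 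Dividing, $|\cG_w|/|\cG| \approx d_w/M_t$, i.e. $\pt{w_{t+1}=w} = (1+o(1)) d_w/M_t$, PROVIDED the error terms are genuinely $o(1)$ relative to the main term — and here is where one uses the deterministic facts from Lemma~\ref{lem:det}: $M_t \geq m/3 = \Theta(n)$, $n_1(t) \geq n_1/2$, and crucially $\Delta' = o(n_1/\Delta)$, which controls $\Delta'\Delta / M_t$ and similar ratios. For the upper bound \eqref{eq:switch:UB} one only needs the backward count to be at most $(1+o(1)) d_w d'_{v_{t+1}}$ and the forward count at least $(1-o(1))m'$, which is the ``easy direction'' of a switching estimate. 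For the lower bound \eqref{eq:switch:LB} in the case $d_w = 1$, the pendant vertex $w$ has a unique edge, so the switching is cleaner — there is no risk of multi-edges at $w$ — and one can get a matching lower bound because the number of ``bad'' configurations (where a switch would create a multiple edge or loop, or hit $S$) is negligible; again this leans on $\Delta' = o(n_1/\Delta)$ and $t = O(T) = o(|Q_0|m)$.

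**Main obstacle.** The delicate point is bookkeeping the residual degrees and the forbidden switches uniformly over all $t = O(T)$ and all $w$, including $w$ of large degree. Vertices in $S$ have been fully placed into $V_0$, so they contribute nothing to the residual space, but their presence reduced $\sum d_w$ over the unexplored part by $\sum_{v\in S} d_v \le m_0$; one must check this does not distort $M_t$ or the edge count $m'$ beyond the claimed $(1+o(1))$ — this is exactly what \eqref{eq:assump2} and item~\ref{it:prop5} of Lemma~\ref{lem:det} are for. The other genuinely tricky ingredient is that $v_{t+1}$ is \emph{not} a uniformly random unexplored vertex but the \emph{smallest} one incident to an unexplored edge, and $e_{t+1}$ is its \emph{smallest} available edge; one must argue the switching can be carried out while respecting (or not disturbing) these ``smallest'' choices, or alternatively that conditioning on $v_{t+1}$ being the smallest such vertex does not bias the count — this is typically handled by noting that the identity of $v_{t+1}$ and the set of its already-revealed neighbours is $\cF_t$-measurable, so the switching is performed inside the conditional space where these are fixed, and the ``smallest edge'' choice only pins down which half-edge of $v_{t+1}$ we track, not the distribution of its partner. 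I expect the upper bound to be routine and the $d_w=1$ lower bound to require the most care in ruling out the bad switches.
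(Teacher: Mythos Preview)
Your proposal is correct and matches the paper's switching argument: partition the conditional space into $\cA=\{w_{t+1}=w\}$ and its complement $\cB$, count switchings in each direction, and control the forbidden switches via $\Delta\Delta'=o(n_1)=o(M_t)$ from Lemma~\ref{lem:det}. One refinement worth flagging for the lower bound (where you say the $d_w=1$ case is ``cleaner''): since $d_w=1$ there is exactly \emph{one} candidate switch from a given input in $\cB$ into $\cA$, and for some individual inputs it is blocked (when $v_{t+1}w$ or $w_{t+1}u$ is already an edge, $u$ being $w$'s unique neighbour), so no pointwise lower bound on switches out of $\cB$ is available; the paper handles this by bounding the \emph{average} number of valid switches over each fibre $\cB_z=\{w_{t+1}=z\}$ via a symmetry that permutes the labels of the degree-$1$ vertices in $[n]\setminus V_t$, reducing the claim to the fact that for any fixed input at most $\Delta+\Delta'\Delta=o(n_1(t))$ degree-$1$ vertices are bad.
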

\begin{proof}
The proof uses an edge-switching argument. A \emph{switching} is a local operation that transforms an input into another one. Given an input $(G,\Pi)$ and two oriented edges $(a,b)$ and $(c,d)$ with $ab,cd \in E(G)$ and $ac,bd\notin E(G)$, we obtain the new input by deleting the edges $ab$ and $cd$, and adding the edges $ac$ and $bd$. Note that this operation preserves the degree of each vertex and does not modify the permutations of the adjacency lists. We will restrict to switchings that do not modify the edges within $V_{t}$ in order to switch between inputs in $\cF_{t}$.

Fix $w\in [n]\setminus V_{t}$. If $X_t=0$, then $w$ is chosen with probability $d_w/M_t$, so we may assume that  $X_t>0$. Let $v_{t+1}$, $e_{t+1}$ and $w_{t+1}$  as described in the process. Given $\cF_t$, $v_{t+1} $ and $e_{t+1}$ are fixed, while $w_{t+1}$ is a random vertex. Let $\cA\subseteq \cF_{t}$ be the set of inputs with $w_{t+1}=w$ and $\cB=\cF_{t}\setminus \cA$. We will estimate the number of switchings between $\cA$ and $\cB$ to prove the lemma.

We first proof~\cref{eq:switch:UB}. To switch from $\cB$ to $\cA$, we need to switch the edges $(v_{t+1},w_{t+1})$ and $(w,u)$ for $u\in N(w)$ and there are at most $d_w$ such switchings for each input in $\cB$. To switch from $\cA$ to $\cB$, it suffices to select the edges $(v_{t+1},w)$ and $(x,y)$ with $x\notin N(v_{t+1})\cup V_{t}$ and $y\notin N(w)$. By~\cref{lem:det}, there are at most $\Delta \Delta'+\Delta'\Delta= o(n_1)=o(M_t)$ oriented edges $(x,y)$ with $x\in [n]\setminus V_{t}$ that violate the previous condition. Thus, there are at least $(1+o(1))M_t$ switchings for each input in $\cA$.
It follows that
$$
\pt{w_{t+1}=w} = \frac{|\cA|}{|\cA|+|\cB|} \leq \frac{|\cA|}{|\cB|} \leq (1+o(1))\frac{d_w}{M_{t}}\;.
$$

We now prove \cref{eq:switch:LB}. Suppose that $d_w=1$. To switch from $\cA$ to $\cB$ we must choose the oriented edge $(v_{t+1},w)$ and an oriented edge $(x,y)$ with $x\in [n]\setminus V_{t}$, otherwise we would alter the edges within $V_{t}$. It follows that there are at most $M_{t}$ switchings for each input in $\cA$. To switch from $\cB$ to $\cA$, we must choose the oriented edge $(v_{t+1},w_{t+1})$ and the unique oriented edge $(w,u)$, where $u$ is the only neighbour of $w$. Observe that if either $v_{t+1}w$ or $w_{t+1}u$ is an edge of the graph, the switching is invalid. Instead of giving a lower bound for the number of switchings of a fixed input in $\cB$, we will give a lower bound for the average number of switchings over $\cB$. For each $z\in [n]\setminus (V_{t}\cup \{w\})$, let $\cB_z$ be the set of inputs in $\cB$ with $w_{t+1}=z$. Given an input $(G,\Pi)$ and $x\in [n]\setminus V_{t}$ with $d_x=1$, we say that the input is \emph{$x$-good} if $v_{t+1}x,zy \notin E(G)$, where $y$ is the only neighbour of $x$; otherwise we call the input \emph{$x$-bad}. Since $d_x=1$ and $d_z\leq \Delta'$, by~\cref{lem:det}, there are at most $\Delta+\Delta'\Delta =o(n_1)=o(n_1(t))$ vertices $x$ for which a given input is $x$-bad. We can generate a random input in $\cB_z$, by first choosing one uniformly at random and then permuting the labels of the vertices of degree $1$ in $[n]\setminus (V_t\cup \{z\})$. Thus, the probability that a random input in $\cB_z$ is $w$-bad is $o(1)$. If an input is $w$-good, switching $(v_{t+1},w_{t+1})$ with $(w,u)$ yields an input in $\cA$. 
It follows that
$$
\pt{w_{t+1}=w}=\frac{|\cA|}{|\cA|+|\cB|}=\frac{1}{1+|\cB|/|\cA|} \geq \frac{1}{1+(1+o(1))M_{t}} = (1+o(1)) \frac{1}{M_{t}}
$$

\end{proof}

Define $\eta_t = d_{w_t}-2$. Next result bounds the first and second moments of $\eta_t$.
\begin{lemma}\label{lem:eta_moments}
For any $v\in [n]$ and any $t=O(T)$, we have
\begin{align}\label{eq:eta_moments}
\Et{\eta_{t+1}} \leq \frac{Q_0}{2} \quad \text{and} \quad \Et{(\eta_{t+1})^2}\leq 4R\;.
\end{align}
\begin{proof}
Note that $\sum_{i=1}^{t}d_{w_i}(d_{w_i}-2) \geq -t$. Using~\cref{lem:det} and $t=O(T)=o(|Q_0|m)$,
\begin{align}\label{eq:V_t}
\sum_{w\in [n]\setminus V_{t}} d_w(d_w-2)&= \sum_{w\in [n]\setminus V_{0}} d_w(d_w-2)- \sum_{i=1}^{t}d_{w_i}(d_{w_i}-2) \leq Q_0 m+ t+1\leq \frac{Q_0 m}{2}\leq 0\;.
\end{align}
Applying~\cref{lem:switch} and $M_t\leq m$,
\begin{align*}
\Et{\eta_{t+1}}&= \sum_{w\in [n]\setminus V_{t}} (d_w-2)\pt{w_{t+1}=w}
\leq \frac{1+o(1)}{M_{t}}\sum_{w\in [n]\setminus V_{t}}d_w(d_w-2)\leq Q_0/2\;.
\end{align*}
Similarly, we can bound the second moment. By \cref{lem:det} and~\cref{eq:switch:UB},
\begin{align*}
\Et{(\eta_{t+1})^2}
&= \sum_{w\in [n]\setminus V_{t}} (d_w-2)^2\pt{w_{t+1}=w}
\leq \frac{(1+o(1))}{M_t} \sum_{w\in [n]\setminus V_{t}} (d_w-2)^2 d_w \leq 4R
\end{align*}

%

\end{proof}

\end{lemma}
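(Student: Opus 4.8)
The plan is to express the conditional moments of $\eta_{t+1}=d_{w_{t+1}}-2$ as sums over the unexplored vertices, weighted by the selection probabilities $\pt{w_{t+1}=w}$, and then to feed in \cref{lem:switch} together with the structural estimates of \cref{lem:det}.

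\emph{Step 1: control of the remaining degree functional.} I would first show that for every $t=O(T)$,
\[
\sum_{w\in[n]\setminus V_t} d_w(d_w-2)\;\le\;\tfrac{3}{5}\,Q_0 m\;<\;0 .
\]
Since $V_0=S\cup\{v\}$ with $S$ certifying $(m_0,Q_0)$-subcriticality, removing the single extra vertex $v$ changes $\sum_{w\in[n]\setminus S}d_w(d_w-2)\le Q_0 m$ by at most $1$ (as $d_v(d_v-2)\ge-1$), and each exploration step deletes exactly one vertex $w_i$, increasing the functional by $-d_{w_i}(d_{w_i}-2)\le 1$. Hence after $t$ steps it is at most $Q_0 m+t+1$, and the hypothesis $Q_0^2 n\ge\omega(n)m_0$ --- equivalently $T=o(|Q_0|m)$, so $t+1=o(|Q_0|m)$ --- yields the claim for $n$ large.

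\emph{Step 2: the two moments.} For the first moment write $\Et{\eta_{t+1}}=\sum_{w\in[n]\setminus V_t}(d_w-2)\pt{w_{t+1}=w}$ and split according to the degree of $w$: degree-$2$ vertices contribute $0$; for degree-$(\ge 3)$ vertices the summand is $\ge0$, so I apply the upper bound $\pt{w_{t+1}=w}\le(1+o(1))d_w/M_t$ from \cref{lem:switch}; for degree-$1$ vertices the summand is $<0$, so I apply instead the matching lower bound $\pt{w_{t+1}=w}\ge(1+o(1))d_w/M_t$ from the same lemma. Collecting terms gives $\Et{\eta_{t+1}}\le\frac{1+o(1)}{M_t}\sum_{w\in[n]\setminus V_t}d_w(d_w-2)$, and combining with Step 1, with $M_t\le m$, and with $Q_0<0$ produces $\Et{\eta_{t+1}}\le Q_0/2$ (the factor $3/5$ in Step 1 swallows the $1+o(1)$). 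For the second moment only the upper half of \cref{lem:switch} is needed, since $(d_w-2)^2\ge0$: $\Et{(\eta_{t+1})^2}\le\frac{1+o(1)}{M_t}\sum_{w\in[n]\setminus V_t}(d_w-2)^2 d_w\le\frac{1+o(1)}{M_t}\sum_{i\in[n]}(d_i-2)^2 d_i=(1+o(1))\frac{mR}{M_t}$, and since $M_t\ge m/3$ by \cref{lem:det} this is at most $3(1+o(1))R\le 4R$ for $n$ large.

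\emph{Main obstacle.} The delicate point is the first-moment estimate: the degree-$1$ vertices enter with a negative sign, and their total contribution can exceed in absolute value the positive part $\sum_{d_w\ge3}d_w(d_w-2)$, so \emph{overestimating} their selection probabilities would invalidate the bound --- this is precisely why \cref{lem:switch} is proved as a two-sided estimate for degree-$1$ vertices but only one-sided for the rest. The remaining care is bookkeeping: one must check that the $o(1)$ errors coming from \cref{lem:switch} stay negligible uniformly over the whole window $t=O(T)$, which is where \cref{lem:det} (giving $M_t=\Theta(m)$ and $\Delta'=o(n_1/\Delta)$) is used. With these in hand the rest is routine.
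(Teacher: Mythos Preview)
Your proposal is correct and follows essentially the same route as the paper's proof: bound the residual functional $\sum_{w\notin V_t}d_w(d_w-2)$ by $Q_0 m+t+1\le \tfrac{1}{2}Q_0 m$ using $t=o(|Q_0|m)$, then feed \cref{lem:switch} into the two conditional moments, invoking \cref{lem:det} for $M_t\ge m/3$ in the second-moment step.

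In fact your write-up is \emph{more} careful than the paper's on one point. The paper passes directly from $\sum_w(d_w-2)\pt{w_{t+1}=w}$ to $\tfrac{1+o(1)}{M_t}\sum_w d_w(d_w-2)$ as though the upper bound \eqref{eq:switch:UB} suffices, but as you correctly observe, the degree-$1$ summands are negative and require the \emph{lower} bound \eqref{eq:switch:LB} instead. Your explicit split into $d_w=1$, $d_w=2$, $d_w\ge 3$ is exactly what makes this step rigorous, and is precisely why \cref{lem:switch} is stated two-sidedly for degree-$1$ vertices. So: same strategy, cleaner execution.
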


\subsection{Proof of \cref{thm:UM1}}

Let $\gamma\coloneqq 80$. Define the stopping time 
\begin{align*}
\tau_X= \tau_X(v) = \inf\{t: X_t=0\}\wedge (\gamma T+1)\;,
\end{align*}
where $X_t$ is obtained by starting the process with $V_0= S\cup \{v\}$. We omit the floor and ceiling functions in this section for ease of notation.

Instead of studying $X_t$, we focus on the stochastic process $(Z_t)_{t\geq 0}$ defined by $Z_0=2|Q_0|T$ and for $t\in \N$
\begin{align}\label{def:proc_Z}
Z_{t+1}\coloneqq Z_t+\eta_{t+1}= 2|Q_0|T+ \sum_{i=0}^{t} \eta_{i+1}\;.
\end{align}
Observe that $Z_t$ is $\cF_t$-measurable. For any $t<\tau_X$, we can bound the increments $X_{t+1}-X_t\leq d_{w_{t+1}}-2=\eta_{t+1}$. Therefore, for every $t\leq \tau_X(v)$ we have $X_{t+1}\leq Z_{t+1}$.

Define the stopping time
\begin{align}\label{def:stop_Z}
\tau_Z=\tau_Z(v) \coloneqq \inf\{t: Z_t=0\}\wedge (\gamma T+1)\;,
\end{align}
where $Z_t$ is obtained by starting the process with $V_0=S\cup \{v\}$. Hence, $\tau_X(v) \leq \tau_Z(v)$ and it suffices to bound the latter from above.

%

Write $\mu_{t+1} \coloneqq (\eta_{t+1}-\Et{\eta_{t+1}})\ind{t< \tau_Z}$ and $S_{t+1}\coloneqq \sum_{i=0}^{t} \mu_{i+1}$. For every $t<\tau_Z$, we can write
\begin{align}\label{def:proc_Z2}
Z_{t+1}=  2|Q_0|T + S_{t+1} + \sum_{i=0}^{t} \Ei{\eta_{i+1}}\;.
\end{align}

Since $\Ei{\mu_{i+1}}=0$ for all $i\geq 0$, $S_t$ is a martingale with respect to $\cF_t$ with $S_0=0$. We will use the following Bennett-type concentration inequality for martingales due to Freedman.
\begin{lemma}[\cite{f1975}]\label{lem:freedman}
Let $(S_t)_{t\geq 0}$ be a martingale with respect to a filtration $(\cF_t)_{t\geq 0}$ with $S_0=0$ and increments $\mu_{t+1}=S_{t+1}-S_t$. Suppose there exists $c>0$ such that $\max_{t\geq 0} |\mu_{t+1}|\leq c$ almost surely. For $t\geq 0$, define
$$
V(t+1)\coloneqq \sum_{i=0}^t \Ei{(\mu_{i+1})^2}\;.
$$
Then, for every $\alpha,\beta>0$
$$
\pl{S_t\geq \alpha \text{ and }V(t)\leq \beta\text{ for some }t\geq 1}\leq \exp\left(\frac{-\alpha^2}{2(\beta+c\alpha)}\right)\;.
$$
\end{lemma}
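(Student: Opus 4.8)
The plan is to prove this classical Bernstein‑type (Freedman) martingale inequality by the exponential supermartingale method. Fix $\lambda>0$ and set $g(\lambda):=(e^{\lambda c}-1-\lambda c)/c^2\ge 0$. The first step is to build the process
\[
Y_t:=\exp\bigl(\lambda S_t-g(\lambda)V(t)\bigr),\qquad t\ge 0,
\]
with the convention $V(0):=0$, so that $Y_0=1$, and to check that $(Y_t)_{t\ge0}$ is a nonnegative supermartingale for $(\cF_t)$. This rests on the elementary pointwise estimate $e^{\lambda x}\le 1+\lambda x+g(\lambda)x^2$, valid for every $x\le c$ because $x\mapsto (e^{\lambda x}-1-\lambda x)/x^2$ is nondecreasing on $\bR$. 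Applying it with $x=\mu_{t+1}$ (recall $|\mu_{t+1}|\le c$), taking $\E{\cdot\mid\cF_t}$, and using $\E{\mu_{t+1}\mid\cF_t}=0$ together with $1+z\le e^z$, gives $\E{e^{\lambda\mu_{t+1}}\mid\cF_t}\le e^{g(\lambda)(V(t+1)-V(t))}$, where one uses that $V(t+1)-V(t)=\E{\mu_{t+1}^2\mid\cF_t}$ is $\cF_t$‑measurable; hence $\E{Y_{t+1}\mid\cF_t}\le Y_t$.

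Second, I would introduce the stopping time $\tau:=\inf\{t\ge 1:\ S_t\ge\alpha\text{ and }V(t)\le\beta\}$, so that the event in the statement is exactly $\{\tau<\infty\}$, and on this event $Y_\tau\ge\exp(\lambda\alpha-g(\lambda)\beta)$ since $S_\tau\ge\alpha$, $V(\tau)\le\beta$ and $g(\lambda)\ge0$. Applying the supermartingale inequality to the bounded times $\tau\wedge n$ and then Fatou's lemma (everything is nonnegative and $Y_{\tau\wedge n}\to Y_\tau$ on $\{\tau<\infty\}$) yields
\[
\exp\bigl(\lambda\alpha-g(\lambda)\beta\bigr)\,\p{\tau<\infty}\ \le\ \E{Y_\tau\ind{\tau<\infty}}\ \le\ \liminf_{n\to\infty}\E{Y_{\tau\wedge n}}\ \le\ Y_0=1,
\]
so $\p{\tau<\infty}\le\exp\bigl(g(\lambda)\beta-\lambda\alpha\bigr)$ for every $\lambda>0$.

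Finally, I would optimise the exponent over $\lambda$. Taking $\lambda=\tfrac1c\log(1+c\alpha/\beta)$ gives the sharp Bennett form $\p{\tau<\infty}\le\exp\bigl(-\tfrac{\beta}{c^2}\,h(c\alpha/\beta)\bigr)$ with $h(u)=(1+u)\log(1+u)-u$, and the elementary bound $h(u)\ge u^2/\bigl(2(1+u)\bigr)$ for $u\ge 0$ then collapses this to the claimed $\exp\bigl(-\alpha^2/(2(\beta+c\alpha))\bigr)$; alternatively one avoids logarithms by using $g(\lambda)\le\tfrac{\lambda^2/2}{1-\lambda c/3}$ for $0\le\lambda c<3$ and choosing $\lambda=\alpha/(\beta+c\alpha)$. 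There is no deep obstacle here: the points that need care are the choice of the correction $g(\lambda)$ so that the one‑step exponential estimate is tight, and the passage from the finite‑horizon supermartingale to the possibly unbounded stopping time $\tau$ via Fatou.
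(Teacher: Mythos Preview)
Your argument is correct and is precisely the standard exponential--supermartingale proof of Freedman's inequality: build the tilted supermartingale $Y_t=\exp(\lambda S_t-g(\lambda)V(t))$ from the one--step bound $e^{\lambda x}\le 1+\lambda x+g(\lambda)x^2$ for $x\le c$, apply optional stopping together with Fatou, and then optimise in $\lambda$ to pass from the Bennett form to the Bernstein form via $h(u)\ge u^2/(2(1+u))$.

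There is nothing to compare against in the paper itself: the lemma is quoted from Freedman~\cite{f1975} and used as a black box, with no proof supplied. Your write-up is essentially Freedman's original argument, so it is entirely appropriate here. The only small comments are cosmetic: the monotonicity of $x\mapsto(e^{\lambda x}-1-\lambda x)/x^2$ on all of $\bR$ is the one analytic fact worth a one-line justification, and in the optional-stopping step you could equally well invoke the supermartingale maximal inequality directly rather than Fatou, but what you wrote is fine.
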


Deterministically, we have $\max_{t\geq 0} |\mu_{t+1}|\leq \Delta\eqqcolon c$. Moreover, by~\cref{lem:eta_moments} for all $t\geq 0$,
\begin{align}\label{eq:bounded_var}
V(t) \leq\sum_{i=0}^{t-1} \Ei{(\eta_{i+1})^2\ind{i<\tau_Z}}\leq 4R (t\wedge \gamma T).
\end{align}
Choose $\alpha= (\gamma/3)|Q_0|T$ and $\beta=4R \gamma T$. Thus, for all $t\geq 0$, $V(t)\leq \beta$ deterministically and, since $|Q_0|\leq 1$, $2(\beta+c\alpha)\leq 8\gamma(R+\Delta |Q_0|)T$. By~\cref{lem:freedman}, uniformly on the choice of $v\in [n]$
\begin{align}\label{eq:bound:S_t}
\p{S_t\geq \alpha \text{ for some } t}& \leq \exp\left(-\frac{\gamma T |Q_0|^2}{72(R+\Delta|Q_0|)}\right)= O\left(1/\lambda\right)\;.
\end{align}
since $\gamma\geq 72$. 

By~\cref{eq:eta_moments} we have $\sum_{i=0}^{\gamma T-1} \Ei{\eta_{i+1}}\leq (\gamma/2)|Q_0|T$. Combining it with \cref{{eq:bound:S_t}}, we obtain uniformly on $v\in [n]$
\begin{align}\label{eq:bound_tau_Z}
\pl{\tau_Z(v)> \gamma T} = \pl{Z_{t}>0 \text{ for all }t\leq  \gamma T} \leq \pl{S_{\gamma T}> (\gamma/2-2) |Q_0|T}= O\left(1/\lambda \right)\;.
\end{align}
since $\gamma\geq 12$.

Observe that if $|\cC(v)|> (\gamma +2)T$, then $\tau_Z(v)\geq \tau_X(v)> \gamma T$. As in~\cref{eq:bound_Z}, letting $Z$ be the number of components of size larger than $(\gamma+2)T$ and by~\cref{eq:bound_tau_Z}
\begin{align*}
\E{Z}\leq \frac{1}{(\gamma+2) T}\sum_{v\in [n]}\p{|\cC(v)|> (\gamma+2) T} \leq \frac{1}{T}\sum_{v\in [n]}\p{\tau_Z(v)> \gamma T} = O\left(1/\log{\lambda}\right)=o(1)\;.
\end{align*}
Markov's inequality concludes the proof.

\section{Proof of \cref{prop:UM2}}\label{sec:exm}


Given $\epsilon\in (0,1)$ and $\Delta=\Delta(n)=o(\sqrt{n})$ with $\log{n}=o(\Delta)$, define 
\begin{align}\label{def:ell_0}
\ell=\left\lfloor (1-\epsilon)\frac{n}{\Delta^2}\right\rfloor.
\end{align}
Consider the degree sequence $\widehat \bfd_n$ that contains $n-\ell$ vertices of degree $1$ and $\ell$ vertices of degree $\Delta$. We may assume that the sum of the degrees is even, otherwise we may add another vertex of degree $1$. For the sake of simplicity, we will omit the floor in the definition of $\ell$. Straightforward computations show that $m= (1+O(\frac{1}{\Delta}))n$ and $Q\sim -\epsilon$.

\newcommand{\pstar}[1]{{\mathbb{P}_*}(#1)}
\newcommand{\ppstar}[1]{{\mathbb{P}_{p_*}}(#1)}
\newcommand{\pp}[1]{{\mathbb{P}_{p}}(#1)}
\newcommand{\Estar}[1]{{\mathbb{E}_*}\left[#1\right]}
\newcommand{\Ep}[1]{{\mathbb{E}_p}\left[#1\right]}
\newcommand{\Epstar}[1]{{\mathbb{E}_{p_*}}\left[#1\right]}

Let $L\subseteq [n]$ denote the set of vertices of degree $\Delta$. Let $\bG_*$ be the random subgraph induced by $\CMn(\widehat \bfd_n)$ on $L$. Let $\bG(L,p)$ be  the Erd\H os-R\'enyi random graph on the vertex set $L$, where each edge in $\binom{L}{2}$ is chosen independently with probability $p$.  Let $\pstar{\cdot}$ and $\pp{\cdot}$  be the probability measures on (multi)graphs with vertex set $L$ associated to $\bG_*$ and $\bG(L,p)$, respectively, and let $\Estar{\cdot}$ and $\Ep{\cdot}$ the expected value operator defined in these probability spaces. 

We briefly sketch the proof. Most of the half-edges in $\widehat \bfd_n$ are incident to vertices of degree $1$. So typically, all vertices in $L$ will pair most of their half-edges with the ones incident to $V\setminus L$ and the order of the largest component in $\CMn(\widehat \bfd_n)$ will be of order at least $\Delta L_1(\bG_*)$.  To estimate $L_1(\bG_*)$, we will show that $\bG_*$ behaves like $\bG(L,p_*)$ with $p_*\coloneqq\frac{\Delta^2}{n}= \frac{(1-\epsilon)}{\ell}$.  Classic results on the subcritical regime of random graphs will give lower bounds for $L_1(\bG(L,p_*))$ that also apply to $L_1(\bG_*)$. We will finally use~\cref{eq:simple} to transfer the lower bound on the largest component from  $\CMn(\widehat \bfd_n)$ to $\Gn(\widehat \bfd_n)$.

Precisely, we will show that certain small subgraphs in $\bG_*$ appear with the same probability as in $\bG(L,p_*)$.
Let $Z_s$ be the number of isolated trees of size $s$ in $\bG_*$. Paley-Zygmund's inequality implies
\begin{align}\label{eq:main_LB}
\pstar{Z_s>0} \geq \frac{\Estar{Z_s}^2}{\Estar{Z_s^2}}\;.
\end{align}

%
%
%
%
%
%
%
%

\begin{lemma}\label{lem:H}
For every $s=O(\log{\ell})$ we have,
\begin{align*}
\Estar{Z_s}&= (1+o(1))\Epstar{Z_s}\\
\Estar{Z_s^2}&= (1+o(1))\Epstar{Z_s^2}\;.
\end{align*}
\end{lemma}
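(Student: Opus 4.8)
The plan is to compute both the first moment $\Estar{Z_s}$ and the second moment $\Estar{Z_s^2}$ directly in the configuration model $\bG_*$, and to show each agrees asymptotically with the corresponding quantity for $\bG(L,p_*)$. The key point is that a fixed small subgraph $H$ on the vertex set $L$ with $e(H)$ edges is realised in $\bG_*$ precisely when a certain set of $e(H)$ specified half-edge pairs occurs in the configuration model matching; and since the half-edges incident to $L$ form only a $p_*$-fraction of all half-edges, the probability of any such fixed pattern behaves like $p_*^{e(H)}$ up to $(1+o(1))$ factors, matching $\bG(L,p_*)$. First I would record the exact formula: for a fixed labelled forest $F$ on a set of $s$ vertices of $L$, the probability that $\bG_*$ contains $F$ as a subgraph (or rather that these $s$ vertices induce exactly $F$ with the remaining $\Delta - \deg_F(v)$ half-edges of each $v$ going outside $L$) can be written as a ratio of products of the form $\prod (\text{available half-edges})$; isolating the $s-1$ edges of $F$ gives a factor $\sim (\Delta^2/m)^{s-1} = (1+o(1)) p_*^{s-1}$ per edge because $m \sim n$, while the requirement that no other pair among the $s$ vertices is matched contributes a $1-o(1)$ factor as long as $s = O(\log \ell)$ and $\Delta = o(\sqrt n)$ (so $\binom{s}{2}\Delta^2 = o(n)$ and $s\Delta = o(n/\Delta)$).

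Next I would assemble the first moment: $\Estar{Z_s} = \binom{\ell}{s} s^{s-2} \cdot \bP_*(\text{a fixed tree on $s$ vertices appears isolated})$, where $s^{s-2}$ counts labelled trees (Cayley). The same count with $p_*^{s-1}(1-p_*)^{\binom{s}{2}-(s-1)}(1-p_*)^{s(\ell-s)}$ gives $\Epstar{Z_s}$, and the ratio of the two probability factors is $1+o(1)$ uniformly over $s=O(\log\ell)$; this yields the first claim. For the second moment I would split $\Estar{Z_s^2} = \sum \bP_*(\text{two isolated $s$-trees } T, T' \text{ both appear})$ according to whether $T$ and $T'$ share vertices. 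The diagonal term ($T=T'$) contributes $\Estar{Z_s}$, which is lower order. The disjoint pairs dominate and the relevant probability factorises as $(1+o(1))$ times the product of the two single-tree probabilities, again because removing $2(s-1)$ matched pairs from a pool of size $m\sim n$ changes each subsequent factor by $1+O(s/n)$, and the "no extra edges among the $2s$ vertices" constraint costs only $1-o(1)$ since $\binom{2s}{2}\Delta^2 = o(n)$. Pairs sharing at least one vertex must be handled separately: if $T$ and $T'$ overlap in $j\geq 1$ vertices but are not identical, the union has strictly more than $s-1$ edges on at most $2s-1$ vertices, so its contribution, summed over all such configurations, is $o(\Estar{Z_s}^2)$ — this is the standard argument that forests of the same size cannot overlap "efficiently."

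The main obstacle I expect is making the error estimates in the configuration-model probability uniform over $s$ up to $\Theta(\log \ell)$ while simultaneously tracking the $(1-p_*)^{s(\ell-s)}$-type factor that appears in $\bG(L,p_*)$ but only implicitly in $\bG_*$ (through the conditioning that the $s$ chosen vertices form an \emph{isolated} tree, i.e.\ none of their half-edges pair among the other $\ell - s$ high-degree vertices). One must verify that in $\bG_*$ this isolation probability is $(1+o(1))e^{-p_* s(\ell-s)} = (1+o(1))e^{-(1-\epsilon)s}$, matching $(1-p_*)^{s(\ell-s)}$; this follows from a negative-association / Poisson-approximation estimate for the number of $L$-internal pairings made by a fixed set of $s\Delta$ half-edges, valid because that number is a sum of weakly dependent indicators with mean $\Theta(s)$ and the second-moment correction is $o(1)$ under $\Delta = o(\sqrt n)$. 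Once these two ratio estimates (for the "tree edges" factor and for the "isolation" factor) are in hand, both displayed identities follow by term-by-term comparison of the sums defining the moments.
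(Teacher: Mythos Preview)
Your approach is essentially the paper's: compute $\pstar{A_T}$ for a fixed isolated tree $T$ on $S\subset L$ by separating the $s-1$ tree-edge pairings (giving a factor $(1+o(1))p_*^{s-1}$ after counting realisations) from the isolation constraint (giving $(1+o(1))(1-p_*)^{s(\ell-s)+\binom{s}{2}-(s-1)}$), show each matches its $\bG(L,p_*)$ counterpart, sum over $S$ and $T$, and then repeat the computation for pairs of trees. One simplification you are overlooking in the second-moment step: two \emph{isolated} $s$-trees cannot overlap in a proper nonempty set of vertices, since an isolated tree on $S$ forces $S$ to be a full component of $\bG_*$; hence $\pstar{A_T,A_{T'}}=0$ unless $T=T'$ or $V(T)\cap V(T')=\emptyset$, and the same holds in $\bG(L,p_*)$, so there is no ``overlapping pairs'' term to bound at all---the paper uses this directly rather than arguing that such terms are $o(\Estar{Z_s}^2)$.
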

\begin{proof}
Choose $S\subset L$ with $|S|=s$ and any tree $T$ with $V(T)=S$. Let $A_T$ be the event that $S$ induces an isolated copy of $T$ in $L$, which can be defined for $\bG_*$ and $\bG(L,p)$. 

Fix an arbitrary ordering of $E(T)$, $e_1,\dots,e_{s-1}$. A \emph{realisation} of $T$ is a set of pairs of half-edges $\{a_1b_1,\dots,a_{s-1}b_{s-1}\}$ such that the endpoints of $e_i$ are the vertices incident to $a_i$ and $b_i$. Let $k(T)$ be the number of realisations of $T$. 
If $d_1,\dots, d_s$ is the degree sequence of $T$, then $\sum_{i=1}^s d_i= 2(s-1)$ and
\begin{align}\label{eq:k}
k(T)=\prod_{i=1}^s \frac{\Delta!}{(\Delta-d_i)!} =\Delta^{2(s-1)} \prod_{i=1}^s \left(1+O\left(\frac{s}{\Delta}\right)\right)= (1+o(1))\Delta^{2(s-1)}\;,
\end{align}
since $s^2=o(\Delta)$. 

%
%

The event $A_T$ admits a partition into $k(T)$ subevents $A^1_T,\dots,A_T^{k(T)}$  depending on the realisation of $T$.
For $i\in[k (T)]$, $\pstar{A_T^i}$ is equal to the probability that $\CMn$ satisfies:
\begin{enumerate}
\item[(P1)] the $i$-th realisation of $T$ is in $\CMn$;
\item[(P2)] for every $u\in S$ and every incident half-edge $a$ not in the $i$-th realisation, $a$ is paired in $\CMn$ to a half-edge incident to $V\setminus L$.
\end{enumerate}

Let $r=s(\ell-s)+\binom{s}{2}$. For $i\in [k]$, consider the $i$-th realisation of $T$, let $(a_1^ib_1^i,\dots, a_{s-1}^ib_{s-1}^i)$ be a sequence of pairings corresponding to $E(T)$ and let $(\bar{a}_{s}^i \bar{b}_{s}^i,\dots, \bar{a}_{r}^i\bar{b}_{r}^i)$ be a sequence of all nonpairings with at least one half-edge in $S$ (we will assume $\bar{a}_j^i$ is always incident to $S$). 
Let $B_j$ be the event that  $a^i_lb^i_l$ is a pairing for all $l\leq  j\wedge (s-1)$ and $\bar{a}_l^i \bar{b}_l^i $ is not a pairing for all $s\leq l \leq r$. 

We can write
\begin{align}\label{eq:global}
\pstar{A_T}&= \sum_{i=1}^{k(T)} \prod_{j=1}^{s-1} \p{a_j^ib_j^i\in E(\CMn)\mid B_{j-1}} \prod_{j=s}^{r} \p{\bar{a}_{j}^i \bar{b}_{j}^i\notin E(\CMn)\mid B_{j-1}}
\end{align}
We first estimate the probability of (P1).
Each term on the first product in \cref{eq:global} is $\frac{1}{m-O(s)}= \left(1+ O(\frac{s}{m}+\frac{1}{\Delta})\right)\frac{1}{n}$; so the first product is 
\begin{align}\label{eq:first}
\begin{split}
 \prod_{j=1}^{s-1} \p{a^i_jb^i_j\in E(\CMn)\mid B_{j-1}} = \left(1+ O\left(\frac{s^2}{m}+\frac{s}{\Delta}\right)\right)\frac{1}{n^{s-1}}
\end{split}
\end{align}
In order to estimate the probability of (P2) (which is given by the second product in \cref{eq:global}), we compute the probability that each half-edge $a$ incident to $S$ is not paired with half-edges in $L$. There are exactly $s\Delta-2(s-1)$ such events, and each has probability $1-\frac{\ell\Delta-O(s\Delta)}{m+O(s\Delta)}= \left(1-\frac{\ell\Delta}{n}\right)\left(1+O(\frac{s\Delta+\ell}{n})\right)$. Thus, we have
\begin{align}\label{eq:second}
\prod_{j=s}^{r} \p{f_jf_j'\notin E(\CMn)\mid B_{j-1}} 
&=  \left(1-\frac{\ell\Delta}{n}\right)^{s\Delta-2(s-1)}\left(1+O\left(\frac{s\Delta+\ell}{n}\right)\right)^{s\Delta}\nonumber\\
&=  \left(1-\frac{\Delta^2}{n}\right)^{s\ell}e^{O(s/\ell+s/\Delta)} \left(1+O\left(\frac{s^2}{\ell}+\frac{s}{\Delta}\right)\right)\nonumber\\
&= (1+o(1)) \left(1-p_*\right)^{r-s+1}\;,
\end{align}
where we used that $(1-x/N)^y=(1-y/N)^x e^{O((x^2y+y^2x)/N^2)}$ with $x=\ell,y=\Delta, N=n/\Delta$, and that $s=o(\Delta), s^2=o(\ell)$.

Plugging \cref{eq:first,eq:second,eq:k} into \cref{eq:global}, we obtain
\begin{align*}
\pstar{A_T}&= (1+o(1))p_*^{s-1}(1-p_*)^{r-s+1}= (1+o(1))\ppstar{A_T}\;.
\end{align*}
Adding over all sets $S\subset L$ with $|S|=s$ and over all trees $T$ with $V(T)=S$, we obtain the first part of the lemma.

For the second part, choose $S,S'\subset [m]$ with $|S|=|S'|=s$ and any pair of trees $T$ and $T'$ with $V(T)=S$ and  $V(T')=S'$. Note that $\pstar{A_T,A_{T'}}=0$ unless $S=S'$ and $T=T'$, or $S\cap S'=\emptyset$. Suppose we are in the latter case, and let $r=2s(m-2s)+\binom{2s}{2}$. Following similar computations as the ones we did for a single tree, we obtain
\begin{align*}
\pstar{A_T,A_{T'}}&=(1+o(1))\ppstar{A_T,A_{T'}}\;,
\end{align*}
and adding over all pairs of sets and trees supported on these sets, the second part also follows.
\end{proof}

The moments of $Z_s$ in $\bG(L,p)$ are well-studied in random graph theory. Let $I(\lambda)=\lambda-1-\ln \lambda$ be the large deviation rate function for Poisson random variables with mean $\lambda>0$. For $\lambda=1-\epsilon$, any $a<(I_{\lambda})^{-1}$ and $s_0= \lfloor a\log{\ell}\rfloor$, we have $\Epstar{Z_{s_0}^2} = (1+o(1))\Epstar{Z_{s_0}}^2$ (see e.g. Lemma 2.12(i) in~\cite{frieze2016introduction}). 
Combining this with \cref{lem:H} and~\cref{eq:main_LB}, $\bG_*$ has with high probability an isolated tree of size $s_0$. As every vertex in $L$ has degree $\Delta$, there are exactly $\Delta s_0- 2(s_0-1)$ vertices of degree $1$ that attach to the given tree. Therefore, there exists a component in $\CMn(\widehat \bfd_n)$ of order $(1+o(1))\Delta s_0$. 

Observe that $I_\lambda=\frac{\epsilon^2}{2}+ O(\epsilon^3)$ and since $Q\sim -\epsilon$, we have $I_\lambda\sim \frac{Q^2}{2}$. As $\E{D^2}=O(1)$ and $R\sim \Delta$, we can use \cref{thm:simple} to deduce that 
\begin{align*}
L_1(\Gn(\widehat \bfd_n))\geq (1+o(1))\Delta s_0 \geq (1+o(1))\frac{2R}{Q^2} \log{\left(\frac{n}{R^2}\right)}\;,
\end{align*}
with probability $1-o(1)$.
This concludes the proof of the proposition.

\begin{remark}[Concentration of $L_1(\Gn(\bfd_n))$]\label{rem:non_concentrated}
\cref{prop:UM2} imposes the condition $\Delta=o(\sqrt{n})$, or equivalently $\ell\to \infty$ as $n\to \infty$. If $\Delta$ is of order $\sqrt{n}$, it is easy to check that the probability that $\bG_*=H$ is bounded away from $0$ for every $H$ of order $\ell$. Since the size of the largest component is asymptotically equal to $\Delta L_1(\bG_*)$, $L_1(\CMn(\widehat \bfd_n))$ and $L_1(\Gn(\widehat \bfd_n))$ are not concentrated.
\end{remark}
\begin{remark}[The case $Q=o(1)$]\label{rem:Qto0}
The largest component of Erd\H os-R\'enyi is well-studied in the barely subcritical regime (see e.g. Theorem 5.6 in~\cite{JLR}). If $p=\frac{1-\epsilon(\ell)}{\ell}$ with $\epsilon(\ell)>0$ and $\ell^{-1/3}\ll \epsilon(\ell)\ll 1$, then
\begin{align}\label{eq:LB_Gnp}
L_1(\bG(\ell,p)) \sim 2\epsilon^2 \log (\epsilon^3 \ell)\;.
\end{align}

Let $\ell=(1-\epsilon(n))\frac{n}{\Delta^2}$ and define the degree sequence $\widehat \bfd_n$ as before. Again, $Q\sim -\epsilon$ and $R\sim \Delta$. In particular $\Delta |Q|=o(R)$ holds. 

Set $p=\frac{1-\epsilon(n)}{\ell}$. The same argument as in the proof of \cref{prop:UM2} and~\cref{eq:LB_Gnp} gives
\begin{align}\label{eq:final}
L_1(\Gn(\widehat \bfd_n)) \geq \Delta L_1(\bG(\ell,p)) \geq (1+o(1)) \frac{2\Delta }{\epsilon^{2}} \log\left(\frac{\epsilon ^3 n}{\Delta^2}\right)\geq (1+o(1)) \frac{2R }{Q^{2}} \log\left(\frac{|Q|^3 n}{R^2}\right)\;.
\end{align}
The condition $\epsilon(\ell)\gg \ell^{-1/3}$ for the validity of~\cref{eq:LB_Gnp} is equivalent to $Q\leq -\omega(n)n^{-1/3}R^{2/3}$, for some $\omega(n)\to \infty$.

\end{remark}

\bibliographystyle{abbrvnat}
\bibliography{subcrit}

\end{document}